\renewcommand*{\backref}[1]{}
\renewcommand*{\backrefalt}[4]{({%
    \ifcase #1 Not cited.%
          \or page~#2%
          \else pages #2%
    \fi%
    })}
 \newcommand\imCMsym[4][\mathord]{%
  \DeclareFontFamily{U} {#2}{}
  \DeclareFontShape{U}{#2}{m}{n}{
    <-6> #25
    <6-7> #26
    <7-8> #27
    <8-9> #28
    <9-10> #29
    <10-12> #210
    <12-> #212}{}
  \DeclareSymbolFont{CM#2} {U} {#2}{m}{n}
  \DeclareMathSymbol{#4}{#1}{CM#2}{#3}
}
\newcommand\alsoimCMsym[4][\mathord]{\DeclareMathSymbol{#4}{#1}{CM#2}{#3}}
\theoremstyle{plain}
\newtheorem{theorem}{Theorem}[section]
\newtheorem{proposition}[theorem]{Proposition}
\newtheorem{corollary}[theorem]{Corollary}
\newtheorem{lemma}[theorem]{Lemma}
\theoremstyle{definition}
\newtheorem{definition}[theorem]{Definition}
\theoremstyle{remark}
\newtheorem{remark}[theorem]{Remark}
\newcommand{\Q}{{\mathbb Q}}
\newcommand{\R}{{\mathbb R}}
\newcommand{\C}{{\mathbb C}}
\newcommand{\F}{{\mathbb F}}
\newcommand{\A}{{\mathbb A}}
\newcommand{\an}{^\mathrm{an}}
\newcommand{\spec}[1]{\mathrm{Spec}\left(#1\right)}
\newcommand{\norm}[1]{\left\vert#1\right\vert}
\newcommand{\bu}{\bullet}
\title{Rigid rational homotopy types}
\author{Christopher Lazda}
 \address{Dipartimento di Matematica Pura e Applicata \\
        Torre Archimede, Via Trieste, 63 \\ 
        35121 Padova \\ 
        Italia}
       \email{lazda@math.unipd.it}
\newcommand{\ho}[1]{\mathrm{Ho}(#1)}
\renewcommand{\an}{{\mathrm{An}^\dagger}}
\begin{document}

\begin{abstract}
In this paper we define a rigid rational homotopy type, associated to any variety $X$ over a perfect field $k$ of positive characteristic. We prove comparison theorems with previous definitions in the smooth and proper, and log-smooth and proper case. Using these, we can show that if $k$ is a finite field, then the Frobenius structure on the higher rational homotopy groups is mixed. We also define a relative rigid rational homotopy type, and use it to define a homotopy obstruction for the existence of sections.
\end{abstract}

\maketitle

\tableofcontents 

\section{Introduction}
\label{intro}

The object of this paper is the study of rational homotopy types in the context of rigid cohomology. In the first few sections we extend Olsson's and Kim/Hain's definitions of $p$-adic rational homotopy types (see \cite{KH04,Ols07b}) to define the rigid rational homotopy type of an arbitrary $k$-variety $X$, where $k$ is a perfect field of characteristic $p>0$. We do this in two different ways: first using embedding systems and overconvergent de Rham dga's, which is nothing more than an extension of Olsson's methods from the convergent to the overconvergent case, and secondly using Le Stum's overconvergent site. The main focus is on comparison results, comparisons with Olsson's and Kim/Hain's definitions are made, as well as comparisons between the two approaches. We also study Frobenius structures, and use these comparison theorems as well as Kim/Hain's result in the case of a good compactification to prove that the rigid rational homotopy type of a variety over a finite field is mixed. As a corollary of this, we deduce that the higher rational homotopy groups of such varieties are mixed. We also use methods similar to Navarro-Aznar's in the Hodge theoretic context (see \cite{NA87}) to discuss the uniqueness of the weight filtration for Frobenius on rational homotopy types. 

We then turn to the relative rigid rational homotopy type, and again we give two definitions, one in terms of Le Stum's overconvergent site, and the other in terms of framing systems and relative overconvergent de Rham complexes. The comparison between the two should then induce a Gauss--Manin connection on the latter, however, we have so far been unable to prove the required property of the former object, namely that it is `crystalline' in the sense of derived categories. What we can show is that this would follow from a certain `generic coherence' result for Le Stum's relative overconvergent cohomology, of which there are analogues in other versions of $p$-adic cohomology such as the theory of arithmetic $\mathcal{D}$-modules or relative rigid cohomology. Here, our approach is strongly influenced again by Navarro-Aznar in his paper \cite{NA93} on relative de Rham rational homotopy theory.
\tableofcontents

\addtocontents{toc}{\setcounter{tocdepth}{1}}

\section{Differential graded algebras and affine stacks}\label{tools}

In this section we quickly recall some of the tools used by Olsson in \cite{Ols07b} to define homotopy types of varieties in positive characteristic, that is To\"{e}n's theory of affine stacks. Although later we will mainly be focusing on the theory of differential graded algebras, we include this material to emphasize the fact that what we are doing is an extension of a particular case of Olsson's work. We will also need it to prove a comparison theorem between different constructions of unipotent fundamental groups.

Let $K$ be a field of characteristic 0. We will denote by $\mathrm{dga}_K$ the category of unital, graded commutative, differential graded algebras over $K$, concentrated in non-negative degrees. We will denote by $\Delta$ the simplicial category, that is, the category whose objects are ordered sets $[n]=\{0,\ldots,n\}$ and morphisms order preserving maps, and by $\mathrm{Alg}_K^\Delta$ the category of cosimplicial $K$-algebras, that is the category of functors $\Delta\rightarrow \mathrm{Alg}_K$. Let $\mathrm{Aff}_K$ denote the category of affine schemes over $K$, that is the opposite category of $\mathrm{Alg}_K$, which we will endow with the fpqc topology unless otherwise mentioned. We will denote by $\mathrm{Pr}(K)$ (respectively, $\mathrm{Sh}(K)$) the category of presheaves (respectively, sheaves) on $\mathrm{Aff}_K$, and $\mathrm{SPr}(K)$ the category of simplicial presheaves on $\mathrm{Aff}_K$, that is the category of functors $\mathrm{Aff}_K\rightarrow \mathrm{SSet}$ into simplicial sets. There are functors
\begin{align}
D:\mathrm{dga}_K &\rightarrow \mathrm{Alg}_K^\Delta \\
\mathrm{Spec}:(\mathrm{Alg}_K^\Delta)^\circ &\rightarrow \mathrm{SPr}(K) 
\end{align}
where $D$ is the Dold-Kan de-normalization functor (see Chapter 8.4 of \cite{Wei94}).

Suppose that $F\in\mathrm{SPr}(K)$, and $x\in F_0(R)$ for some $R\in\mathrm{Aff}_K$. Then, for all $n\geq 1$, there is a presheaf of groups $\pi_n^\mathrm{pr}(F,x):\mathrm{Aff}_K/R\rightarrow (\mathrm{Groups})$ which takes $S\rightarrow R$ to $\pi_n(\norm{F(S)},x)$ (here, $\norm{\cdot}$ is the geometric realization functor). We define $\pi_n(F,x)$ to be the sheafification of this presheaf. We also define $\pi_0(F)$ to be the sheafification of the presheaf $R\mapsto \pi_0(\norm{F(R)})$.

\begin{definition} A morphism $A^*\rightarrow B^*$ in $\mathrm{dga}_K$ is said to be a:

\begin{itemize} \item weak equivalence if it induces isomorphisms on cohomology;
\item fibration if it is surjective in each degree;
\item cofibration if it satisfies the left lifting property with respect to trivial fibrations.
\end{itemize}

\end{definition}

\begin{definition} A morphism $A^\bullet\rightarrow B^\bullet$ in $\mathrm{Alg}^\Delta_K$ is said to be a:

\begin{itemize} \item weak equivalence if the induced map $H^*(N(A^\bullet))\rightarrow H^*(N(B^\bullet))$ on the cohomology of the normalized complex of the underlying cosimplicial $K$-module is an isomorphism;
\item fibration if it is level-wise surjective;
\item cofibration if it satisfies the left lifting property with respect to trivial fibrations.
\end{itemize}

\end{definition}

\begin{definition} A morphism $F\rightarrow G$ in $\mathrm{SPr}(K)$ is said to be a:

\begin{itemize} \item weak equivalence if it induces isomorphisms on all homotopy groups;
\item cofibration if for every $R\in\mathrm{Aff}_K$, $F(R)\rightarrow G(R)$ is a cofibration in $\mathrm{SSet}$;
\item fibration if it satisfies the right lifting property with respect to trivial cofibrations.
\end{itemize}

\end{definition}

Then, $D$ is an equivalence of model categories, and $\mathrm{Spec}$ is right Quillen (Proposition 2.2.2 of \cite{Toe04}). Thus, we get functors $D$, $\R\mathrm{Spec}$ on the level of homotopy categories.

We will also need the functor of Thom-Sullivan cochains, this is a functor
\begin{equation}
\mathrm{Th}:\mathrm{dga}_K^\Delta\rightarrow \mathrm{dga}_K
\end{equation} 
which is defined as follows (see \S2.11-2.14 of \cite{Ols11}). Let $R_p=\mathcal{O}(\Delta^p_K)$ denote the $K$-algebra of functions on the `algebraic $p$-simplex' $\Delta^p_K$, that is $R_p=K[t_0,\ldots,t_p]/(\sum_i t_i=1)$, which we make into a simplicial $K$-algebra $R_\bu$ in the obvious way. Let $\Omega^*_{\Delta^\bu_K}$ be its de Rham, this is a simplicial dga over $K$.

Let $\mathscr{M}_\Delta$ denote the category where object are morphisms $[m]\rightarrow [n]$ in $\Delta$, and where a morphism from $[m]\rightarrow [n]$ to $[m']\rightarrow [n']$ is a commutative square
\begin{equation}
\xymatrix{
[m]\ar[r] & [n] \ar[d]\\
[m'] \ar[r] \ar[u] & [n'].
}
\end{equation}
Given any $A^{*,\bullet}\in \mathrm{dga}^\Delta_K$, we obtain a functor
\begin{align} {\Omega^*_{\Delta^\bu_K}} \otimes {A^{*,\bullet}}:{\mathscr{M}_\Delta} &\rightarrow  \mathrm{dga}_K \\
([m]\rightarrow [n]) &\mapsto \Omega^*_{\Delta^m_K}\otimes_K A^{*,n} 
\end{align}
and we define $\mathrm{Th}(A^{*,\bullet})$ to be  $\varprojlim (\Omega^*_{\Delta^\bu_K}\otimes A^{*,\bullet})$.

\begin{proposition}[(\cite{Ols11}, Theorem 2.12)] \label{Thom} Let $C_K^{\geq0}$ denote the category of non-negatively graded chain complexes of $K$-modules. There is a natural transformation of functors
\begin{equation}
\xymatrix@=6pt{ \mathrm{dga}^\Delta_K \ar[rrr]^{\mathrm{Th}}\ar[ddd]_{\mathrm{forget}} & & &   \mathrm{dga}_K\ar[ddd]^{\mathrm{forget}} \\ & & & &\\ &\ar@{=>}[ur] & & & \\
(C^{\geq0}_K)^\Delta\ar[rrr]_{\mathrm{Tot}_N}  & & & C^{\geq0}_K }
\end{equation}
where $\mathrm{Tot}_N$ is the functor which takes a cosimplicial chain complex $C^{*,\bullet}$ to the total complex of the normalized double complex $N(C^{*,\bullet})$. Moreover, this natural transformation is a quasi-isomorphism when evaluated on objects of $\mathrm{dga}_K^\Delta$.
\end{proposition}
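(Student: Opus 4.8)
The plan is to realise the comparison morphism as an explicit ``integration over the algebraic simplex'' map, the algebraic incarnation of the de Rham theorem for simplicial sets (Sullivan, Bousfield--Guggenheim), and then to prove it is a quasi-isomorphism by a coskeletal filtration argument which reduces everything to the acyclicity of the de Rham complex of $\A^n_K$ in characteristic zero. The first observation is that the forgetful functor $\mathrm{dga}_K\to C^{\geq0}_K$ preserves limits, so the underlying complex of $\mathrm{Th}(A^{*,\bullet})$ is the limit over $\mathscr{M}_\Delta$ of the underlying complexes of $\Omega^*_{\Delta^\bu_K}\otimes A^{*,\bullet}$; since $\mathscr{M}_\Delta$ is a version of the twisted arrow category of $\Delta$, this limit is the end $\int_{[n]\in\Delta}\Omega^*_{\Delta^n_K}\otimes A^{*,n}$. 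For each $n$, sending a $q$-form to the function on the $q$-dimensional faces of $\Delta^n_K$ obtained by integrating it over each face -- the integral of a monomial over the standard simplex being defined by the usual combinatorial formula, hence over any field of characteristic $0$ -- defines a map $\Omega^q_{\Delta^n_K}\to C^q_{\mathrm{simp}}(\Delta^n;K)$ into the normalised simplicial $q$-cochains of the standard $n$-simplex; Stokes' formula makes this a chain map, it is compatible with restriction to faces, and it automatically kills degenerate faces (a $q$-form pulled back from a lower-dimensional face vanishes), so it assembles into a morphism of simplicial complexes $\Omega^*_{\Delta^\bu_K}\to C^*_{\mathrm{simp}}(\Delta^\bu;K)$. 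Tensoring with $A^{*,\bullet}$ and passing to the $\mathscr{M}_\Delta$-limit yields the desired natural transformation, once one records the standard $(\mathrm{co})$end computation identifying $\int_{[n]\in\Delta}C^*_{\mathrm{simp}}(\Delta^n;K)\otimes A^{*,n}$ with $\mathrm{Tot}_N(A^{*,\bullet})$ -- the algebraic analogue of the usual description of the totalisation of a cosimplicial object. Restricting to $(C^{\geq0}_K)^\Delta$ gives precisely the transformation in the statement.

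For the quasi-isomorphism claim I would compare the coskeletal towers $\{\mathrm{cosk}_n A^{*,\bullet}\}$ of source and target. Both $\mathrm{Th}$ and $\mathrm{Tot}_N$ carry this tower to a tower of complexes whose successive layers are governed by the Eilenberg--MacLane cosimplicial objects $K(N^nA^{*,\bullet},n)$, and in each fixed total degree these towers are eventually constant, so there are no $\varprojlim^1$ obstructions and it suffices to compare the two functors on the layers $K(M^*,n)$. There the statement reduces to the single-simplex assertion that $\Omega^*_{\Delta^n_K}\to C^*_{\mathrm{simp}}(\Delta^n;K)$ is a quasi-isomorphism for every $n$, and more precisely that it induces a quasi-isomorphism on the subcomplexes of forms, respectively cochains, supported away from the boundary. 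On the combinatorial side $\Delta^n$ is contractible, so its normalised cochain complex is $K$ in degree $0$ and that of the pair $(\Delta^n,\partial\Delta^n)$ is $K$ in degree $n$; on the other side $\Delta^n_K=\A^n_K$, so the algebraic Poincaré lemma -- valid exactly because $K$ has characteristic $0$ -- gives the same cohomology, and one checks the generators correspond under integration, e.g.\ $\int_{\Delta^n}dt_1\wedge\cdots\wedge dt_n=1/n!\neq0$. A five-lemma induction up the towers then concludes.

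The difficulty here is organisational rather than conceptual. One must set up the integration map $\Omega^*_{\Delta^\bu_K}\to C^*_{\mathrm{simp}}(\Delta^\bu;K)$ so that it is genuinely functorial over all of $\mathscr{M}_\Delta$, that is, compatible with morphisms acting on both the source object $[m]$ and the target object $[n]$ of an arrow $[m]\to[n]$, and with degeneracies; and one must make the identification with $\mathrm{Tot}_N$, and its compatibility with the coskeletal filtration, precise. The only place where something substantive enters is the reduction to the acyclicity of $\Omega^*_{\Delta^n_K}$, i.e.\ the algebraic Poincaré lemma for affine space over a field of characteristic zero; the failure of the scalars $1/n!$ to be invertible in positive characteristic is exactly why the construction needs $K$ of characteristic $0$. (Alternatively one could invoke this acyclicity directly to say that $\Omega^*_{\Delta^\bu_K}$ is a resolution of the constant simplicial complex and build the comparison homotopically, but the explicit integration map has the virtue of being canonical and of making functoriality over $\mathscr{M}_\Delta$ transparent.)
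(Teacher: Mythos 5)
The paper offers no proof of this proposition at all: it is quoted with a citation to Olsson's Theorem 2.12, whose own argument descends from Navarro-Aznar and Hinich--Schechtman. So the only meaningful comparison is with that classical proof, and your reconstruction is essentially it: the end description of $\mathrm{Th}$ and of $\mathrm{Tot}_N$ over $\mathscr{M}_\Delta$, the integration map $\Omega^*_{\Delta^\bu_K}\to C^*_{\mathrm{simp}}(\Delta^\bu;K)$ defined by the combinatorial formula for $\int_{\Delta^n}t_1^{a_1}\cdots t_n^{a_n}\,dt_1\wedge\cdots\wedge dt_n$, Stokes, and the reduction to the algebraic Poincar\'e lemma for $\Delta^n_K\cong\A^n_K$ in characteristic zero. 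All of these steps are sound, including the observation that degenerate simplices are automatically killed (a $q$-form pulled back from $\A^{q-1}_K$ vanishes).

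The one assertion I would not let stand as written is that the coskeletal towers are ``eventually constant in each fixed total degree.'' That is true for $\mathrm{Tot}_N$, but false at the chain level for $\mathrm{Th}$: every cosimplicial degree $m$ contributes terms $\Omega^p_{\Delta^m_K}\otimes A^{q,m}$ to every total degree $p+q=k$, so $\mathrm{Th}(\mathrm{cosk}_nA)^k$ keeps changing with $n$. The standard repair is available and short: the restriction maps $\mathrm{Th}(\mathrm{cosk}_nA)\to\mathrm{Th}(\mathrm{cosk}_{n-1}A)$ are degreewise surjective (by the extension lemma for polynomial forms on $(\Delta^n,\partial\Delta^n)$, i.e.\ the same Poincar\'e-lemma input you already use), so the Milnor sequence
\begin{equation}
0\to\textstyle\varprojlim^1 H^{k-1}\to H^k(\varprojlim)\to\varprojlim H^k\to 0
\end{equation}
applies; and since the layers are governed by $K(N^nA,n)$, whose contribution to cohomology is concentrated in degrees $\geq n$, the towers of cohomology groups stabilize in each degree and the $\varprojlim^1$ term vanishes. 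With that patch your five-lemma induction up the tower closes and the proof is complete.
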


We will also need to consider derived push-forwards for sheaves of dga's. If $(\mathcal{T},\mathcal{O})$ is a ringed topos, with $\mathcal{O}$ a $\Q$-algebra, then the category $\mathrm{dga}(\mathcal{T};\mathcal{O})$ of $\mathcal{O}$-dga's is a model category, with weak equivalences/fibrations defined to be those morphisms which are weak equivalences/fibrations of the underlying complexes, and cofibrations defined using a lifting property. If $f:(\mathcal{T},\mathcal{O})\rightarrow (\mathcal{T}',\mathcal{O}')$ is a morphism of ringed topoi, and both $\mathcal{O},\mathcal{O}'$ are $\Q$-algebras, with $f^{-1}\mathcal{O}'\rightarrow \mathcal{O}$ flat, then $f_*$ is right Quillen, and hence we can consider the functor $\R f_*$ between homotopy categories of dga's. By the definition of the model category structure on $\mathrm{dga}(\mathcal{T};\mathcal{O})$, taking $\R f_*$ commutes with passing to the underling complex. When there is no likelihood of confusion, we will often write $\mathrm{dga}(\mathcal{O})$ instead of $\mathrm{dga}(\mathcal{T};\mathcal{O})$. We will also write $\mathrm{dga}_R$ when $\mathcal{T}$ is the punctual topos and $R$ is a $\Q$-algebra.

If $k$ is a perfect field of positive characteristic, then we will construct homotopy types by considering dga's on cosimplicial lifts to characteristic zero; thus, we will want to consider cosimplicial `spaces' $V_\bu$ over a field $K$ of characteristic $0$. In this situation, we naturally get a derived functor
\begin{equation}
\R\Gamma:\ho{\mathrm{dga}(V_\bu;K)}\rightarrow \ho{\mathrm{dga}_K^\Delta}
\end{equation}
where the RHS is the category $\mathrm{dga}_K^\Delta$ with level-wise quasi-isomorphisms inverted, and can compose with $\mathrm{Th}(-)$ (which naturally descends to $\ho{\mathrm{dga}_K^\Delta})$ to give
\begin{equation}
\R\Gamma_\mathrm{Th}:=\mathrm{Th}\circ\R\Gamma:\ho{\mathrm{dga}(V_\bu;K)}\rightarrow \ho{\mathrm{dga}_K}.
\end{equation}

\section{Rational homotopy types of varieties}

Let $k$ be a perfect field of characteristic $p>0$, and $K$ a complete, discretely valued field with residue field $k$. We will denote by $\mathcal{V}$ the ring of integers of $K$, and by $\varpi$ a uniformizer. In this section, we will define, for any variety $X/k$ (variety = separated scheme of finite type), a stack $(X/K)_\mathrm{rig}\in\mathrm{Ho}(\mathrm{SPr}(K))$ which represents the rational homotopy type of $X/k$. We essentially use Olsson's methods from \cite{Ols07b}, but replacing `embedding systems' by `framing systems'. This allows us to extend the definition of crystalline (unipotent) schematic homotopy types to non-smooth and non-proper $k$-varieties.

\subsection{The definition of rigid homotopy types}

Throughout, formal $\mathcal{V}$-schemes will be assumed to be $\varpi$-adic, topologically of finite type over $\mathcal{V}$, and separated. A frame over $\mathcal{V}$, as defined by Berthelot, consists of a triple $(U, \overline{U}, \mathscr{U})$ where $U\subset \overline{U}$ is an open embedding of $k$-varieties, and $\overline{U}\subset\mathscr{U}$ is a closed immersion of formal $\mathcal{V}$-schemes (considering $\overline{U}$ as a formal $\mathcal{V}$-scheme via its $k$-variety structure). We say that a frame is smooth if the structure morphism $\mathscr{U}\rightarrow\mathrm{Spf}(\mathcal{V})$ is smooth in some neighbourhood of $U$, and proper if $\mathscr{U}$ is proper over $\mathcal{V}$. We denote the generic fibre of $\mathscr{U}$ in the sense of rigid analytic spaces by $\mathscr{U}_{K0}$; the reason for this being that later on we will want to consider Berkovich spaces, and we need a way to distinguish the two. Let $X/k$ be a variety over $k$.

\begin{definition} A framing system for $X/K$ consists of a simplicial frame $\mathfrak{U}_\bullet=(U_\bullet, \overline{U}_\bullet,\mathscr{U}_\bullet)$ such that:

\begin{itemize} \item $U_\bullet\rightarrow X$ is a Zariski hyper-covering (or an \'{e}tale or proper hyper-covering);
\item for each $n$, $(U_n, \overline{U}_n, \mathscr{U}_n)$ is a smooth and proper frame.
\end{itemize}

\end{definition}

\begin{proposition} \label{frame1} Every pair $X/K$ as above admits a framing system.
\end{proposition}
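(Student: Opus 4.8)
The plan is to adapt Olsson's construction of embedding systems (\cite{ols1}) to the overconvergent setting; the only genuinely new point is that, besides lifting the ambient spaces to \emph{smooth} formal $\mathcal{V}$-schemes, one can simultaneously arrange them to be \emph{proper}. Since $X$ is quasi-compact it admits a finite affine open cover, so it suffices to produce smooth and proper frames for finite disjoint unions of affine $k$-varieties in a way compatible with morphisms, and then run the usual inductive construction of a hypercovering carrying the frame data along. It is harmless to use a Zariski hypercovering throughout, so the alternatives in the definition will be ignored.

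The local input is two elementary observations. \textbf{(a)} Any finite disjoint union $V$ of affine $k$-varieties admits a locally closed immersion $V\hookrightarrow\mathbb{P}^N_k$ for some $N$; writing $\overline{V}$ for its Zariski closure and $\mathscr{V}=\widehat{\mathbb{P}^N_\mathcal{V}}$ for the $\varpi$-adic completion of $\mathbb{P}^N_\mathcal{V}$ along its special fibre, the triple $(V,\overline{V},\mathscr{V})$ is a frame, and it is smooth and proper because $\mathbb{P}^N_\mathcal{V}\to\mathrm{Spec}(\mathcal{V})$ is. \textbf{(b)} Given finitely many smooth and proper frames $(W_i,\overline{W}_i,\mathscr{W}_i)$ and morphisms $V\to W_i$ from such a $V$, combine an embedding $V\hookrightarrow\mathbb{P}^N_k$ with the composites $V\to W_i\subset\overline{W}_i$ to get an immersion $V\hookrightarrow\mathbb{P}^N_k\times_k\prod_i\overline{W}_i$; its Zariski closure $\overline{V}$ sits inside $\mathscr{V}:=\widehat{\mathbb{P}^N_\mathcal{V}}\times_\mathcal{V}\prod_i\mathscr{W}_i$, which is again smooth and proper over $\mathcal{V}$ since finite products of smooth proper formal $\mathcal{V}$-schemes are smooth and proper, and the projections exhibit $(V,\overline{V},\mathscr{V})$ as a smooth and proper frame together with morphisms of frames to each $(W_i,\overline{W}_i,\mathscr{W}_i)$ lying over $V\to W_i$. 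Finally, a finite disjoint union of smooth and proper frames is again one.

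With (a) and (b) in hand I would build $\mathfrak{U}_\bullet$ by induction on the simplicial degree, following the standard construction of a (split) hypercovering. Take $U_0$ to be a finite affine open cover of $X$, framed by (a), so that $U_0\to X$ is a Zariski cover. Given an $n$-truncated simplicial frame $\mathfrak{U}_{\le n}$ whose underlying $n$-truncated simplicial scheme is a truncated hypercovering of $X$ with each $U_j$ a finite disjoint union of affines, the relative coskeleton produces the target $(\mathrm{cosk}^X_n\,\mathfrak{U}_{\le n})_{n+1}$, a quasi-compact $k$-scheme; choose $U_{n+1}$ to be a finite disjoint union of affines mapping to it by a Zariski covering and containing the degenerate part. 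Composing with the coskeleton projections gives morphisms from $U_{n+1}$ to the finitely many $U_j$, $j\le n$, appearing in the face maps, so (b) equips $U_{n+1}$ with a smooth and proper frame mapping compatibly to all the $\mathscr{U}_j$ via projections, while the degeneracies $\mathscr{U}_n\to\mathscr{U}_{n+1}$ are the evident maps coming from the splitting. Carrying this out for all $n$ yields the desired framing system.

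The substance is entirely in (a) and (b); everything else is the bookkeeping — identical to that in \cite{ols1} — needed to check that the maps produced at each stage are morphisms of frames and satisfy the simplicial identities, i.e. that the product construction in (b) is functorial enough. The one place the argument differs from Olsson's convergent version, and the only step needing any care, is keeping the ambient formal schemes proper: this is exactly what forces the use of $\mathbb{P}^N_\mathcal{V}$ rather than $\mathbb{A}^N_\mathcal{V}$ (or an affine formal neighbourhood) in (a), and of products of proper formal schemes to realise the face maps in (b). I do not expect any serious obstacle.
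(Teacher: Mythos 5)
Your proposal is correct in substance, but it takes a noticeably more elaborate route than the paper. The local input is the same: embed a finite disjoint union of affines into $\mathbb{P}^N_k$, take the Zariski closure there, and use $\widehat{\mathbb{P}}^N_{\mathcal{V}}$ (together with products of such frames to realise maps to previously constructed levels). Where you differ is in the global assembly: you build a general split hypercovering by induction on simplicial degree, choosing a fresh affine refinement of the relative coskeleton at each level and invoking your observation (b) to frame it. The paper instead stops after degree zero: it frames a single finite affine cover $U\rightarrow X$ as $(U,\overline{U},\mathscr{U})$ and then takes the \v{C}ech nerve, setting $U_n=U\times_X\cdots\times_X U$, $\mathscr{U}_n=\mathscr{U}\times_{\mathcal{V}}\cdots\times_{\mathcal{V}}\mathscr{U}$, and $\overline{U}_n$ the closure of $U_n$ in $\overline{U}\times_k\cdots\times_k\overline{U}$. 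Since everything at level $n$ is literally an $n$-fold product of the level-zero data, the face and degeneracy maps are the canonical projections and diagonals and the simplicial identities hold on the nose, so none of the coskeleton bookkeeping you defer to Olsson is needed; this is exactly the step in your argument that would require the most care (indexing the products so that the maps $\mathscr{U}_{n+1}\rightarrow\mathscr{U}_j$ genuinely assemble into a simplicial formal scheme). What your version buys in exchange is generality: it produces framed \emph{split} hypercoverings with refinement allowed at every level, which is what one would need for \'etale or proper hypercoverings, whereas the \v{C}ech construction only yields the $0$-coskeleton of a Zariski cover — which is all the proposition requires.
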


\begin{proof} Let $\{U_i\}$ be a finite open affine covering for $X$. Then there exists an embedding $U_i\rightarrow \mathbb{P}^{n_i}_k$ for some $n_i$, and we let $\overline{U}_i$ be the closure of $U_i$ in $\mathbb{P}^{n_i}_k$. We can now consider the frame $(U, \overline{U}, \mathscr{U})$ where $U=\CMcoprod_i U_i$, $\overline{U}=\CMcoprod_i\overline{U}_i$ and $\mathscr{U}=\CMcoprod_i \widehat{\mathbb{P}}^{n_i}_{\mathcal{V}}$. Now define $U_n=U\times_X\ldots\times_X U$, with $n$ copies of $U$, and similarly define  $Y_n=\overline{U}\times_k\ldots \times_k \overline{U}$ and $\mathscr{U}_n=\mathscr{U}\times_\mathcal{V}\ldots \times_\mathcal{V}\mathscr{U}$, fibre product in the category of formal $\mathcal{V}$-schemes. Then we have a simplicial triple $(U_\bullet, Y_\bullet,\mathscr{U}_\bullet)$, and we get a framing system $(U_\bullet, \overline{U}_\bullet,\mathscr{P}_\bullet)$ for $X$ by taking $\overline{U}_n$ to be the closure of $U_n$ in $Y_n$. \end{proof}

Given a framing system $\mathfrak{U}_\bullet$ for $X/K$, we get a simplicial  rigid analytic space $V_0(\mathfrak{U}_\bullet):=]\overline{U}_\bullet[_{\mathscr{U}_\bullet0}$ over $K$ (here the $0$ refers to the fact that we are working with rigid, rather than Berkovich spaces), as well as a sheaf of $K$-dga's $j^\dagger\Omega^*_{]\overline{U}_\bullet[_{\mathscr{U}_\bullet0}}$ on this simplicial space. Here, $j^\dagger$ is Berthelot's functor of overconvergent sections.

\begin{definition} The rational homotopy type of $X/K$ is by definition 
\begin{equation}
\R\Gamma_\mathrm{Th}(\Omega^*(\mathcal{O}^\dagger_{X/K})):=\mathrm{Th}(\R\Gamma(j^\dagger\Omega^*_{]\overline{U}_\bullet[_{\mathscr{U}_\bullet0}}))\in\mathrm{Ho}(\mathrm{dga}_K
\end{equation}
and we will denote by $(X/K)_\mathrm{rig}$ the affine stack $\R\mathrm{Spec}(D(\R\Gamma_\mathrm{Th}(\Omega^*(\mathcal{O}^\dagger_{X/K}))))$. We may sometimes refer to $(X/K)_\mathrm{rig}$ as the rational homotopy type of $X$, and will try to keep any confusion this might cause to a minimum.
\end{definition}

\begin{remark} As a rational homotopy type, this definition only captures unipotent information about the fundamental group. In \cite{Ols07b}, Olsson defines a pointed homotopy type that captures the whole pro-algebraic theory of a geometrically connected, smooth and proper $k$-variety. It would not be hard to mimic his methods to give a general definition for an arbitrary geometrically connected $k$-variety, but to do so would involve a choice of base-point. For the most part we want to avoid doing this, which is why we restrict ourselves to rational homotopy types.
\end{remark}

Of course, we must prove that the definition is independent of the framing system $\mathfrak{U}_\bullet$ chosen. The first step is to show that we can recover the rigid cohomology of $X/K$ from $\R\Gamma_{\mathrm{Th}}(\Omega^*(\mathcal{O}^\dagger_{X/K}))$.

\begin{lemma} Consider the forgetful functor $\varphi:\ho{\mathrm{dga}_K}\rightarrow \ho{C^{\geq0}_K}$. Then \begin{equation}
\varphi(\R_{\mathrm{Th}}\Gamma(\Omega^*(\mathcal{O}_{X/K}^\dagger)))\cong \R\Gamma_\mathrm{rig}(X/K)
\end{equation} the latter being the rigid cohomology of $X/K$.
\end{lemma}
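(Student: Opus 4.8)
The plan is to reduce the statement to cohomological descent for rigid cohomology, in three steps. First, I would use the Thom--Sullivan comparison of Proposition~\ref{Thom}: after applying the forgetful functor $\varphi$, the functor $\mathrm{Th}$ becomes naturally quasi-isomorphic to $\mathrm{Tot}_N$. Applied to the cosimplicial dga $\R\Gamma(j^\dagger\Omega^*_{]\overline{U}_\bullet[_{\mathscr{U}_\bullet0}})$ this gives a natural quasi-isomorphism
\[
\varphi\big(\R\Gamma_{\mathrm{Th}}(\Omega^*(\mathcal{O}_{X/K}^\dagger))\big)\;\cong\;\mathrm{Tot}_N\big(\varphi\big(\R\Gamma(j^\dagger\Omega^*_{]\overline{U}_\bullet[_{\mathscr{U}_\bullet0}})\big)\big),
\]
where the right-hand side is the total complex of the normalised cosimplicial complex underlying $\R\Gamma(j^\dagger\Omega^*_{]\overline{U}_\bullet[_{\mathscr{U}_\bullet0}})$. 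So it suffices to identify this cosimplicial complex.

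Second, I would compute it degreewise. Since $\R\Gamma$ on the simplicial rigid space $V_0(\mathfrak{U}_\bullet)$ is the level-wise derived global sections functor, its $n$-th term is $\R\Gamma(]\overline{U}_n[_{\mathscr{U}_n0},\,j^\dagger\Omega^*_{]\overline{U}_n[_{\mathscr{U}_n0}})$. By hypothesis $(U_n,\overline{U}_n,\mathscr{U}_n)$ is a smooth and proper frame, so by Berthelot's construction of rigid cohomology this complex represents $\R\Gamma_{\mathrm{rig}}(U_n/K)$. Thus $\varphi(\R\Gamma(j^\dagger\Omega^*_{]\overline{U}_\bullet[_{\mathscr{U}_\bullet0}}))$ is, functorially in $[n]$, quasi-isomorphic to the cosimplicial complex $[n]\mapsto \R\Gamma_{\mathrm{rig}}(U_n/K)$ attached to the augmented simplicial variety $U_\bullet\to X$.

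Third, I would invoke cohomological descent for rigid cohomology: since $U_\bullet\to X$ is a Zariski (resp.\ \'etale, resp.\ proper) hypercovering, the canonical map
\[
\R\Gamma_{\mathrm{rig}}(X/K)\;\longrightarrow\;\mathrm{Tot}_N\big([n]\mapsto \R\Gamma_{\mathrm{rig}}(U_n/K)\big)
\]
is a quasi-isomorphism. Chaining the three identifications yields $\varphi(\R\Gamma_{\mathrm{Th}}(\Omega^*(\mathcal{O}_{X/K}^\dagger)))\cong\R\Gamma_{\mathrm{rig}}(X/K)$.

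The main obstacle is the third step. Cohomological descent for rigid cohomology along proper hypercoverings is a theorem of Tsuzuki and Chiarellotto--Tsuzuki; the Zariski and \'etale cases are also available, and in any case can be reduced to a \v{C}ech / Mayer--Vietoris spectral sequence together with the fact that rigid cohomology is a sheaf for the relevant topology on $X$. A secondary point requiring care is that the identification in the second step must be natural enough to assemble into a morphism of cosimplicial objects, not merely a degreewise isomorphism; the naturality of the Thom--Sullivan transformation in Proposition~\ref{Thom} is what allows this to survive the passage through $\mathrm{Th}$, and one also checks that Berthelot's comparison between the tube complex and rigid cohomology is compatible with the simplicial structure maps of the frame $\mathfrak{U}_\bullet$.
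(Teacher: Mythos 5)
Your proposal is correct and is exactly the argument the paper intends: the paper's proof is a one-line appeal to Proposition \ref{Thom} (reducing $\mathrm{Th}$ to $\mathrm{Tot}_N$ after forgetting the algebra structure) followed by cohomological descent for rigid cohomology (Theorem 7.1.2 of \cite{cohdes2}), which is precisely your three-step chain spelled out in detail. The points you flag as requiring care (naturality in $[n]$ and the degreewise identification with $\R\Gamma_{\mathrm{rig}}(U_n/K)$ via Berthelot's construction for smooth and proper frames) are the right ones and are handled correctly.
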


\begin{remark} Note that $\ho{C^{\geq0}_K}$ is naturally a full subcategory of the derived category $D^-(K)$.
\end{remark}

\begin{proof}
Using Proposition \ref{Thom}, this just follows from cohomological descent for rigid cohomology, see e.g. Theorem 7.1.2 of \cite{Tsu04}.
\end{proof}

\begin{corollary} The object $\R\Gamma_{\mathrm{Th}}(\Omega^*(\mathcal{O}^\dagger_{X/K}))$ is independent of the framing system chosen. 
\end{corollary}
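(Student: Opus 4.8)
The plan is the standard one for proving independence of a homotopy type from auxiliary data: combine the preceding Lemma, which identifies the underlying complex with rigid cohomology, with the fact that weak equivalences in $\mathrm{dga}_K$ are detected on underlying complexes, and then connect any two framing systems by a zig-zag of morphisms.

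First I would spell out what a morphism of framing systems $\mathfrak{V}_\bullet\to\mathfrak{U}_\bullet$ covering $\mathrm{id}_X$ is: a morphism of hypercoverings $V_\bullet\to U_\bullet$ over $X$ together with compatible morphisms of frames $(V_n,\overline{V}_n,\mathscr{V}_n)\to(U_n,\overline{U}_n,\mathscr{U}_n)$ in Berthelot's sense. Such data induce a morphism of simplicial rigid spaces $]\overline{V}_\bullet[_{\mathscr{V}_\bullet0}\to\,]\overline{U}_\bullet[_{\mathscr{U}_\bullet0}$ together with a map of sheaves of $K$-dga's from the pullback of $j^\dagger\Omega^*$ on the target to $j^\dagger\Omega^*$ on the source; applying $\R\Gamma$ and then $\mathrm{Th}$, both functorial on homotopy categories, produces a morphism
\[
\R\Gamma_{\mathrm{Th}}(\Omega^*(\mathcal{O}^\dagger_{X/K}))_{\mathfrak{U}_\bullet}\longrightarrow \R\Gamma_{\mathrm{Th}}(\Omega^*(\mathcal{O}^\dagger_{X/K}))_{\mathfrak{V}_\bullet}
\]
in $\ho{\mathrm{dga}_K}$. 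By the model structure recalled above this is a weak equivalence as soon as the forgetful functor $\varphi$ to $\ho{C^{\geq0}_K}$ carries it to a quasi-isomorphism. Since $\varphi$ commutes with $\R\Gamma$ and, up to the quasi-isomorphism of Proposition \ref{Thom}, with $\mathrm{Th}$, its image is the pullback map between the two total complexes $\mathrm{Tot}_N\R\Gamma(j^\dagger\Omega^*_{]\overline{U}_\bullet[})$ and $\mathrm{Tot}_N\R\Gamma(j^\dagger\Omega^*_{]\overline{V}_\bullet[})$; by cohomological descent for rigid cohomology, which is functorial in hypercoverings and frames over $X$ (Theorem 7.1.2 of \cite{cohdes2}), both compute $\R\Gamma_{\mathrm{rig}}(X/K)$ compatibly, so this map is an isomorphism. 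Hence the displayed morphism is a weak equivalence, i.e.\ an isomorphism in $\ho{\mathrm{dga}_K}$.

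Next I would check that any two framing systems $\mathfrak{U}_\bullet,\mathfrak{U}'_\bullet$ admit a common refinement. As the category of (Zariski, \'etale, or proper) hypercoverings of $X$ is cofiltered, pick $W_\bullet\to X$ refining both $U_\bullet$ and $U'_\bullet$; for each $n$, embed $W_n\hookrightarrow\mathbb{P}^{m_n}_k$ as in Proposition \ref{frame1}, set $\mathscr{W}_n:=\mathscr{U}_n\times_{\mathcal{V}}\mathscr{U}'_n\times_{\mathcal{V}}\widehat{\mathbb{P}}^{m_n}_{\mathcal{V}}$, and take $\overline{W}_n$ to be the closure of $W_n$ in $\overline{U}_n\times_k\overline{U}'_n\times_k\mathbb{P}^{m_n}_k$. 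A product of smooth (resp.\ proper) formal $\mathcal{V}$-schemes is smooth (resp.\ proper), so each $(W_n,\overline{W}_n,\mathscr{W}_n)$ is a smooth and proper frame, and the projections assemble into morphisms of framing systems $\mathfrak{W}_\bullet\to\mathfrak{U}_\bullet$, $\mathfrak{W}_\bullet\to\mathfrak{U}'_\bullet$. By the previous paragraph the induced cospan in $\ho{\mathrm{dga}_K}$ consists of isomorphisms, giving the desired identification.

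The step that really requires care is the functoriality invoked in the second paragraph: the Lemma as stated only gives an abstract isomorphism of objects, so one must go through its proof and check that it produces a natural transformation — equivalently, that a morphism of framing systems induces on underlying complexes exactly the standard transition map between two models of rigid cohomology. This is a known property of cohomological descent, but it is the one nontrivial input; everything else is formal manipulation in the respective model categories.
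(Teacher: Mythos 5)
Your proposal is correct and follows essentially the same route as the paper: connect any two framing systems by a common refinement mapping to both (the paper simply takes the product framing system $(U_\bullet\times_X V_\bullet,\;\overline{U_\bullet\times_X V_\bullet},\;\mathscr{U}_\bullet\times_{\mathcal{V}}\mathscr{V}_\bullet)$ rather than re-embedding a refined hypercovering), then check that the induced map is an isomorphism by forgetting the algebra structure and observing via cohomological descent that both sides compute $\R\Gamma_{\mathrm{rig}}(X/K)$. Your closing remark about the compatibility of the descent isomorphisms with transition maps is a fair point of care, but it is the same input the paper implicitly relies on.
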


\begin{proof} The proof is exactly as in \cite{Ols07b}, Section 2.24. If we have two framing systems $\mathfrak{V}_\bullet=(V_\bullet , \overline{V}_\bullet , \mathscr{V}_\bullet)$ and $\mathfrak{U}_\bullet=(U_\bullet, \overline{U}_\bullet, \mathscr{U}_\bullet)$ for $X/K$, then we can take their product  $(U_\bullet\times_X V_\bullet, \overline{U}_\bullet\times_k\overline{V}_\bullet,\mathscr{U}_\bullet\times_\mathcal{V}\mathscr{V}_\bullet)$, and after replacing $\overline{U}_\bullet\times_k \overline{V}_\bullet$ by the closure of $U_\bullet\times_X V_\bullet$, we get a smooth and proper framing system which maps to both $\mathfrak{U}_\bullet$ and $\mathfrak{V}_\bullet$. Hence, we may assume that we have a map $\mathfrak{V}_\bullet\rightarrow \mathfrak{U}_\bullet$. This induces a map $\mathrm{Th}(\R\Gamma(j^\dagger\Omega^*_{V_0(\mathfrak{U}_\bullet)}))\rightarrow \mathrm{Th}(\R\Gamma(j^\dagger\Omega^*_{V_0(\mathfrak{V}_\bullet)}))$ in $\ho{\mathrm{dga}_K}$ and to check that it is an isomorphism, we may forget the algebra structure and prove that it is an isomorphism in $\ho{C_K^{\geq0}}\subset D^-(K)$. But this is true because (after forgetting the algebra structure) both sides compute the rigid cohomology of $X/K$.
\end{proof}

\subsection{Comparison with Navarro-Aznar's construction of homotopy types}

Suppose that our variety $X/k$ is `suitably nice', in that it admits an embedding into a smooth and proper frame $\mathfrak{X}=(X, \overline{X} ,\mathscr{X})$. Then, the work of Navarro-Aznar in \cite{NA87} suggests a closely related, but \emph{a priori} different way of computing the homotopy type of $X/k$. One considers the sheaf of dga's $j^\dagger\Omega^*_{]\overline{X}[_{\mathscr{X}0}}$ on $]\overline{X}[_{\mathscr{X}0}$, and then simply defines the rational homotopy type of $X/k$ to be $\R\Gamma(j^\dagger\Omega^*_{]\overline{X}[_{\mathscr{X}0}})$. That this agrees with the above definition follows from the fact that if $A^\bu\in\ho{\mathrm{dga}_K}^\Delta$ is the constant cosimplicial object on $A$, then $\mathrm{Th}(A^\bu)\cong A$.

\subsection{Comparison with Olsson's homotopy types}\label{compolsson}

Now suppose that $X$ is geometrically connected, smooth and proper, and that $K=\mathrm{Frac}(W(k))$ is the fraction field of the Witt vectors of $k$. Then, Olsson has define a pointed stack $X_\mathscr{C}\in\ho{\mathrm{SPr}_*(K)}$ associated to the category $\mathscr{C}$ of unipotent convergent isocrystals on $X$. In this section, we would like to compare $(X/K)_\mathrm{rig}$ with $X_\mathscr{C}$.

We must therefore review Olsson's construction of $X_\mathscr{C}$. He considers an embedding system for $X$, that is an \'{e}tale hyper-covering $U_\bullet$ of $X$, together with an embedding of $U_\bullet$ into a simplicial $p$-adic formal scheme $\mathscr{P}_\bullet$, which is formally smooth over $W=W(k)$. He then considers the $p$-adic completion $D_\bullet$ of the divided power envelope of $U_\bullet$ in $\mathscr{P}_\bullet$, and considers the sheaf of $K$-dga's $\Omega^*_{D_\bullet}\otimes_W K$ on $\mathscr{P}_\bullet$. He then defines $X_\mathscr{C}$ as the stack $\R\mathrm{Spec}(D(\mathrm{Th}(\R\Gamma(\Omega^*_{D_\bullet}\otimes_W K))))$. If $x\in X(k)$, then $x:\mathrm{Spec}(k)\rightarrow X$ induces a morphism $\R\Gamma(\Omega^*_{D_\bullet}\otimes_W K)\rightarrow K$ and hence makes $X_\mathscr{C}$ naturally into a pointed stack. 

Now, we can choose a framing system $\mathfrak{U}_\bullet=(U_\bullet, \overline{U}_\bullet , \mathscr{U}_\bullet)$ for $X$ such that $(U_\bullet,\mathscr{U}_\bullet)$ is an embedding system for $X$, for example any framing system constructed as in Proposition \ref{frame1} will do. If we let $D_\bullet$ be the $p$-adic completion of the divided power envelope of $U_\bullet$ in $\mathscr{U}_\bullet$, then the canonical map $(D_\bullet)_{K0}\rightarrow \mathscr{U}_{K0}$ factors through $]U_\bullet[_{\mathscr{U}_\bullet0}$ and hence we get a natural morphism
\begin{equation}
\R\Gamma (j^\dagger\Omega^*_{]\overline{U}_\bullet[_{\mathscr{U}_\bullet0}} )\rightarrow \R\Gamma(\Omega^*_{D_\bullet}\otimes_W K)
\end{equation}
in $\ho{\mathrm{dga}_K^\Delta}$. We claim that it becomes an isomorphism after applying $\mathrm{Th}(-)$. Indeed, we may forget the algebra structure and prove that it is an isomorphism in $\ho{\mathrm{Ch}^{\geq0}_K}$. But the the LHS computes the rigid cohomology of $X/K$, and the RHS the convergent cohomology of $X/K$. Since $X$ is proper, they coincide.

\subsection{Functoriality and Frobenius structures}

In this section, we discuss the functoriality of the rational homotopy type, as well as how to put a Frobenius structure on the rational homotopy type of a $k$-variety $X$.

So suppose that $f:X\rightarrow Y$ is a morphism of $k$-varieties, $\mathfrak{U}_\bu=(U_\bu,\overline{U}_\bu,\mathscr{U}_\bu)$ is a framing system for $X$, $\mathfrak{V}_\bu=(V_\bu,\overline{V}_\bu,\mathscr{V}_\bu)$ is a framing system for $Y$, and $\mathfrak{f}:\mathfrak{U}_\bu\rightarrow \mathfrak{V}_\bu$ is a morphism covering $f:X\rightarrow Y$. Note that given $f:X\rightarrow Y$ we can always choose such a set-up. Then, we get a morphism
\begin{equation}
 \mathfrak{f}^*_{K0}:j^\dagger\Omega^*_{V_0(\mathfrak{V}_\bu)}\rightarrow j^\dagger\Omega^*_{V_0(\mathfrak{U}_\bu)}
\end{equation}
in $\mathrm{dga}(V_0(\mathfrak{U}_\bu);K)$ which induces a morphism 
\begin{equation}
\mathfrak{f}^*:\R\Gamma_\mathrm{Th}(j^\dagger\Omega^*_{V_0(\mathfrak{V}_\bu)})\rightarrow \R\Gamma_\mathrm{Th}(j^\dagger\Omega^*_{V_0(\mathfrak{U}_\bu)})
\end{equation} 
in $\ho{\mathrm{dga}_K}$. Of course, we need to check that this is independent of the choice of $\mathfrak{f}$, we will not do this here but wait until \S3 when we will have an alternative construction of the rational homotopy type which is clearly functorial. We will, however, still speak of the induced morphism
\begin{equation}
f^*:\R\Gamma_\mathrm{Th}(\Omega^*(\mathcal{O}_{Y/K}^\dagger))\rightarrow \R\Gamma_\mathrm{Th}(\Omega^*(\mathcal{O}_{X/K}^\dagger))
\end{equation} 
in $\ho{\mathrm{dga}_K}$. We can also use similar ideas to define Frobenius structures.

\begin{definition} An $F$-framing of $X$ is a framing $\mathfrak{U}_\bu=(U_\bu,\overline{U}_\bu,\mathscr{U}_\bu)$ as above, together with a lifting $F_\bu:\mathscr{U}_\bu\rightarrow \mathscr{U}_\bu$ of Frobenius compatible with the Frobenius on $K$.
\end{definition}

Given such an $F_\bu$, we get a quasi-isomorphism $F_\bu^*:j^\dagger\Omega^*_{V_0(\mathfrak{U}_\bu)}\otimes_{K,\sigma}K\rightarrow j^\dagger\Omega^*_{V_0(\mathfrak{U}_\bu)}$ in $\mathrm{dga}(V_0(\mathfrak{U}_\bu);K)$ and hence a isomorphism 
\begin{equation}
\phi:\R\Gamma_\mathrm{Th}(\Omega^*(\mathcal{O}_{X/K}^\dagger))\otimes_{K,\sigma}K\rightarrow\R\Gamma_\mathrm{Th}(\Omega^*(\mathcal{O}_{X/K}^\dagger))
\end{equation} 
in $\ho{\mathrm{dga}_K}$. Again, this seemingly depended on the choice of Frobenius $F_\bu:\mathscr{U}_\bu\rightarrow \mathscr{U}_\bu$, and we will prove in \S3 that it does not. Moreover, if $X\rightarrow Y$ is a morphism of $k$-varieties, then we will see that the induced morphism
\begin{equation}
 \R\Gamma_\mathrm{Th}(\Omega^*(\mathcal{O}_{Y/K}^\dagger))\rightarrow \R\Gamma_\mathrm{Th}(\Omega^*(\mathcal{O}_{X/K}^\dagger))
\end{equation}
is compatible with Frobenius, in the sense that we get a commutative diagram
\begin{equation}
\xymatrix{  \R\Gamma_\mathrm{Th}(\Omega^*(\mathcal{O}_{Y/K}^\dagger))\otimes_{K,\sigma}K \ar[r]\ar[d] & \R\Gamma_\mathrm{Th}(\Omega^*(\mathcal{O}_{X/K}^\dagger))\otimes_{K,\sigma}K \ar[d] \\ \R\Gamma_\mathrm{Th}(\Omega^*(\mathcal{O}_{Y/K}^\dagger))\ar[r] & \R\Gamma_\mathrm{Th}(\Omega^*(\mathcal{O}_{X/K}^\dagger)) }
\end{equation} 
in $\ho{\mathrm{dga}_K}$. 

\subsection{Mixedness for homotopy types} In this section, we will suppose that $k=\F_q$ is a finite field, and that $K$ is the fraction field of the Witt vectors $W=W(k)$ of $k$. By Frobenius, we will mean the $q$-power Frobenius. In \S6 of \cite{KH04}, Kim and Hain define mixedness for an $F$-dga, and prove that if $X/k$ is a geometrically connected, smooth $k$-variety, with good compactification, then the $F$-dga that they define to represent the rational homotopy type of $X$ is mixed. We wish to extend their results to show that the rigid rational homotopy type of any $k$-variety $X$ is mixed, and the proof is in three steps.

\begin{itemize} 
\item A comparison between our rigid homotopy type and their crystalline homotopy type, when both are defined. 
\item A descent result for rigid homotopy types, which will follow easily from the corresponding theorem in cohomology. 
\item A result stating that mixedness is preserved under this descent operation.
\end{itemize}

So let $(Y,M)$ be a geometrically connected, log-smooth and proper $k$-variety, such that the log structure $M$ comes from a strict normal crossings divisor $D\subset Y$. We refer the reader to \emph{loc. cit.} for the definition of the crystalline rational homotopy type $A_{(Y,M)}$ of $(Y,M)$ - this is a $K$-dga with a Frobenius structure. Let $Y^\circ=Y\setminus D$ be the complement of $D$.

\begin{proposition} \label{complog}There is a quasi-isomorphism $\R\Gamma_\mathrm{Th}(\Omega^*(\mathcal{O}^\dagger_{Y^\circ/K}))\cong A_{(Y,M)}$ in $F\text{-}\ho{\mathrm{dga}_K}$.
\end{proposition}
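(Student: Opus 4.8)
The plan is to follow the strategy of \S\ref{compolsson}: produce a morphism of cosimplicial sheaves of $K$-dga's relating the two constructions, apply $\mathrm{Th}\circ\R\Gamma$, and then check that the resulting map of $K$-dga's is an isomorphism in $\ho{\mathrm{dga}_K}$ by forgetting the algebra structure and reducing to a comparison in cohomology.

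First I would arrange for a single cosimplicial formal $W$-scheme $\mathscr{P}_\bu$ from which both sides can be computed. Take a log embedding system $(\widetilde{U}_\bu,N_\bu)\hookrightarrow (\mathscr{P}_\bu,\mathscr{N}_\bu)$ for $(Y,M)$ of the kind used by Kim and Hain to define $A_{(Y,M)}$, but with the terms $\mathscr{P}_\bu$ chosen proper and log-smooth over $W$: this is possible by a log version of Proposition \ref{frame1}, embedding the terms of an \'etale hyper-covering into products of formal projective spaces carrying the log structure of the union of the coordinate hyperplanes and the hyperplane at infinity (which also comes equipped with the obvious coordinate-wise Frobenius lift, compatible with the log structure and with $\sigma$ on $K$). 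Since the hyper-covering is strict \'etale over $(Y,M)$, the divisor $\widetilde{D}_\bu\subset\widetilde{U}_\bu$ defining $N_\bu$ is the pullback of $D$; hence $U_\bu:=\widetilde{U}_\bu\setminus\widetilde{D}_\bu$ is an \'etale hyper-covering of $Y^\circ$, and, letting $\overline{U}_\bu$ be the closure of $U_\bu$ in the special fibre of $\mathscr{P}_\bu$ (which contains $\widetilde{U}_\bu$, since $U_\bu$ is dense in it), the triple $\mathfrak{U}_\bu=(U_\bu,\overline{U}_\bu,\mathscr{P}_\bu)$ is a framing system for $Y^\circ$: it is proper because $\mathscr{P}_\bu$ is, and in a neighbourhood of $U_\bu$ in $\mathscr{P}_\bu$, where $\mathscr{N}_\bu$ is trivial, log-smoothness of $(\mathscr{P}_\bu,\mathscr{N}_\bu)$ is ordinary smoothness.

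Next I would construct the comparison morphism. On $\mathscr{P}_\bu$ one has, on the one hand, the log de Rham complex $\omega^*_{D_\bu}\otimes_W K$ computing $A_{(Y,M)}$ (with $D_\bu$ the $p$-adic completion of the log divided power envelope of $\widetilde{U}_\bu$), and on the other hand the overconvergent de Rham complex $j^\dagger\Omega^*_{]\overline{U}_\bu[_{\mathscr{P}_\bu0}}$ computing our $\R\Gamma_\mathrm{Th}(\Omega^*(\mathcal{O}^\dagger_{Y^\circ/K}))$. Away from the tube of $\widetilde{D}_\bu$ a log differential form is an honest differential form, so restriction of log forms to a strict neighbourhood of $]U_\bu[$ inside $]\overline{U}_\bu[$, followed by the canonical map into overconvergent sections, gives a natural morphism (or, keeping to honest maps of sheaves, a short zig-zag through the strict neighbourhood $]\widetilde{U}_\bu[$) of cosimplicial sheaves of $K$-dga's, hence a morphism
\begin{equation}
A_{(Y,M)}=\mathrm{Th}\bigl(\R\Gamma(\omega^*_{D_\bu}\otimes_W K)\bigr)\longrightarrow \mathrm{Th}\bigl(\R\Gamma(j^\dagger\Omega^*_{]\overline{U}_\bu[_{\mathscr{P}_\bu0}})\bigr)=\R\Gamma_\mathrm{Th}(\Omega^*(\mathcal{O}^\dagger_{Y^\circ/K}))
\end{equation}
in $\ho{\mathrm{dga}_K}$. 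To see that it is an isomorphism I would, exactly as in the earlier comparisons, apply the forgetful functor $\varphi\colon\ho{\mathrm{dga}_K}\to\ho{C^{\geq0}_K}$, under which $\mathrm{Th}$ and $\R\Gamma$ commute with forgetting the algebra structure: the map just constructed then realizes, after $\varphi$, the comparison quasi-isomorphism between the log rigid (equivalently, since $(Y,M)$ is proper, log crystalline) cohomology of a log-smooth and proper variety with strict normal crossings log structure and the rigid cohomology $\R\Gamma_\mathrm{rig}(Y^\circ/K)$ of the complement of the divisor (Shiho; compare also Baldassarri--Chiarellotto in the algebraic setting, and the comparison used by Kim and Hain themselves).

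Finally, for the statement in $F\text{-}\ho{\mathrm{dga}_K}$, the point is that the whole construction is $F$-equivariant: the Frobenius lift on $\mathscr{P}_\bu$ fixed above induces $\sigma$-semilinear endomorphisms of both de Rham complexes with which the comparison map of the previous paragraph is strictly compatible, so after passing to $\ho{\mathrm{dga}_K}$ and invoking the independence of the Frobenius structures of the choices made — established in \S3 on our side, and in \emph{loc. cit.} for $A_{(Y,M)}$ — we obtain the isomorphism in $F\text{-}\ho{\mathrm{dga}_K}$. I expect the main obstacle to be the construction in the second paragraph: one has to exhibit a single system that is simultaneously a Kim--Hain log embedding system for $(Y,M)$ and a framing system for $Y^\circ$, with properness of the $\mathscr{P}_\bu$, compatibility of the log divisor with the chosen compactifications, and the existence of a compatible Frobenius lift all holding at once. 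Once that is arranged, the remainder is a formal consequence of the cohomological comparison, precisely as in \S\ref{compolsson}.
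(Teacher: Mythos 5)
Your proposal is correct in outline and rests on the same two pillars as the paper's proof — a common simplicial lifting from which both dga's are computed, followed by a reduction (after forgetting the algebra structure) to Shiho's chain of comparison theorems — but it differs in two places where the paper takes an easier or more careful route. First, the paper does not insist on proper formal lifts: it takes a \v{C}ech covering of $Y$ by affines, lifts each level affinely via an exact closed immersion into a smooth simplicial log scheme over $W$, and identifies the overconvergent corner with $\R\Gamma_\mathrm{Th}(\Omega^*(\mathcal{O}^\dagger_{Y^\circ/K}))$ by cohomological descent for \emph{partially} overconvergent cohomology. This sidesteps the construction you yourself flag as the main obstacle — a single system that is simultaneously a proper framing system for $Y^\circ$ and an exact, log-smooth, Frobenius-equipped embedding system; in particular, exactness of the immersion into projective space with its coordinate-hyperplane log structure is not automatic and would need an argument. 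Second, your ``restriction of log forms'' cannot be a single morphism: there is no map of ringed spaces in the required direction between the (completed) divided power envelope and a strict neighbourhood of $]U_\bu[$, so the comparison is necessarily a zig-zag, and your ``short zig-zag through $]\widetilde{U}_\bu[$'' in fact has to pass through two intermediate objects — the log de Rham complex on the tube (Shiho's log-analytic cohomology, which maps \emph{out} to both the overconvergent complex and the completed PD envelope) and the $p$-adic completion map $D_\bu\rightarrow\widehat{D}_\bu$ (the log-crystalline/log-convergent comparison). These are exactly the four corners of the commutative diagram in the paper's proof. Once that diagram is in place, your reduction to the cohomological comparisons and your treatment of Frobenius go through essentially as you state them.
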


\begin{proof} Choose a finite open affine covering $\{ U_i\}$ of $Y$, let $Y_0=\CMcoprod_i U_i$ and let $Y_\bu\rightarrow Y$ be the associated \v{C}ech hyper-covering. The pullback to $Y_\bu$ of the log structure on $Y$ is defined by some strict normal crossings divisor $H_\bu\subset Y_\bu$. Since everything is affine, both $Y_\bu$ and the divisor defining the log structure lift to characteristic zero, so we can choose an exact closed immersion  $Y_\bu\rightarrow Z_\bu$ into a smooth simplicial log scheme over $W$. Let $D_\bu$ be the divided power envelope of $Y_\bu$ in $Z_\bu$, and $\widehat{D}_\bu$ its $p$-adic completion. We have a diagram of cosimplicial dga's
\begin{equation}
\xymatrix{ \R\Gamma(]Y_\bu[_{\widehat{Z}_\bu0},j^\dagger\Omega^*_{]Y_\bu[_{\widehat{Z}_\bu0}}) & \ar[l]\R\Gamma(]Y_\bu[^{\log}_{\widehat{Z}_\bu0},\omega^{*}_{]Y_\bu[^{\log}_{\widehat{Z}_\bu0}}) \ar[d]\\ \R\Gamma(D_{\bu K},\omega^{*}_{D_\bu} \otimes_W K )\ar[r] & \R\Gamma(\widehat{D}_{\bu K},\omega^{*}_{\widehat{D}_\bu} \otimes_W K ) }
\end{equation} 
where
\begin{itemize} 
\item the rigid space $]Y_\bu[^{\log}_{\widehat{Z}_\bu0}$ together with its logarithmic de Rham complex is defined as in \S2.2 of \cite{Shi02};
\item the top horizontal arrow comes from the natural morphism $]Y_\bu[^{\log}_{\widehat{Z}_\bu0}\rightarrow ]Y_\bu[_{\widehat{Z}_\bu0}$;
\item the right-hand side vertical arrow comes from the fact that writing $\omega^{*}_{\widehat{Z}_\bu}$ for the logarithmic de Rham complex on $\widehat{Z}_\bu$,
\begin{align}
\omega^{*}_{]Y_\bu[^{\log}_{\widehat{Z}_\bu0}} &\cong (\omega^{*}_{\widehat{Z}_\bu}\otimes_W K)|_{]Y_\bu[^{\log}_{\widehat{Z}_\bu0}} \\
\omega^{*}_{\widehat{D}_\bu}&\cong \omega^*_{\widehat{Z}_\bu}\otimes_{\mathcal{O}_{\widehat{Z}_\bu}}\mathcal{O}_{\widehat{D}_\bu}  
\end{align}
and the natural map $\widehat{D}_{\bu K}\rightarrow \widehat{Z}_{\bu K}$ factors though $]Y_\bu[^{\log}_{\widehat{Z}_\bu0}$;
\item the bottom horizontal arrow is given by $p$-adic completion.
\end{itemize}

We now apply the functor $\mathrm{Th}(-)$ to obtain the diagram 
\begin{equation}
\xymatrix{ \R\Gamma_\mathrm{Th}(\Omega^*(\mathcal{O}^\dagger_{Y^\circ /K})) & \ar[l]\mathrm{Th}\left(\R\Gamma(]Y_\bu[^{\log}_{\widehat{Z}_\bu0},\omega^{*}_{]Y_\bu[^{\log}_{\widehat{Z}_\bu0}})\right) \ar[d]\\ A_{(Y,M)} \ar[r] & \mathrm{Th}\left(\R\Gamma(\widehat{D}_{\bu K},\omega^{*}_{\widehat{D}_\bu} \otimes_W K )\right) }
\end{equation} 
where the isomorphism
\begin{equation}
\mathrm{Th}\left(\R\Gamma(]Y_\bu[_{\widehat{Z}_\bu0},j^\dagger\Omega^*_{]Y_\bu[_{\widehat{Z}_\bu0}})\right)\cong  \R\Gamma_\mathrm{Th}(\Omega^*(\mathcal{O}^\dagger_{Y^\circ/K}))
\end{equation}
comes from using cohomological descent for partially overconvergent cohomology and the isomorphism
\begin{equation}
\mathrm{Th}\left(\R\Gamma(D_{\bu K},\omega^{*}_{D_\bu} \otimes_W K )\right)\cong A_{(Y,M)} 
\end{equation}
is in \S4 of \cite{KH04}.

I claim that all these morphisms are in fact quasi-isomorphisms. Indeed, the cohomology groups of the top left dga are rigid cohomology groups of $Y^\circ$, those of the top right are the log-analytic cohomology groups of $(Y,M)$ in the sense of Chapter 2 of \cite{Shi02}, those of the bottom right are log-convergent cohomology groups of $(Y,M)$, and those of the bottom left are log-crystalline cohomology groups of $(Y,M)$, tensored with $K$. 

On cohomology, the top horizontal and right vertical arrows are the comparison maps between rigid and log-analytic cohomology and log-analytic and log-convergent cohomology defined in \S\S2.4 and 2.3 of \emph{loc. cit.}, respectively, where they are proved to be isomorphisms. The bottom horizontal arrow is the comparison map between log-crystalline and log-convergent cohomology, which is proved to be an isomorphism in \emph{loc. cit}. \end{proof}

Now let $X$ be a $k$-variety, and $Y_\bu\rightarrow X$ a simplicial $k$-variety mapping to $X$. Then we get an augmented cosimplicial object \begin{equation}
\R\Gamma_{\mathrm{Th}}(\Omega^*(\mathcal{O}_{X/K}^\dagger))\rightarrow \R\Gamma_{\mathrm{Th}}(\Omega^*(\mathcal{O}_{Y_\bu/K}^\dagger))
\end{equation}
in $F\text{-}\ho{\mathrm{dga}_K}$, which induces a morphism
\begin{equation}
\R\Gamma_{\mathrm{Th}}(\Omega^*(\mathcal{O}_{X/K}^\dagger))\rightarrow \mathrm{Th}(\R\Gamma_{\mathrm{Th}}(\Omega^*(\mathcal{O}_{Y_\bu/K}^\dagger))).
\end{equation}
The descent theorem we will need is the following proposition.

\begin{proposition} \label{descenthom}Suppose that $Y_\bu\rightarrow X$ is a proper hyper-covering. Then 
\begin{equation}
\R\Gamma_{\mathrm{Th}}(\Omega^*(\mathcal{O}_{X/K}^\dagger))\rightarrow \mathrm{Th}(\R\Gamma_{\mathrm{Th}}(\Omega^*(\mathcal{O}_{Y_\bu/K}^\dagger)))
\end{equation}
is an isomorphism in $F\text{-}\ho{\mathrm{dga}_K}$.
\end{proposition}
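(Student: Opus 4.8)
The plan is to reduce the statement, as in the earlier independence arguments, to the corresponding descent result in cohomology. The key observation is that by the very definition of the model structure on $\mathrm{dga}_K$, a morphism in $\ho{\mathrm{dga}_K}$ (respectively in $F\text{-}\ho{\mathrm{dga}_K}$) is an isomorphism if and only if the underlying morphism of complexes is a quasi-isomorphism; the Frobenius structure is just extra data carried along, so once we know the underlying map is a quasi-isomorphism and the Frobenius squares commute (which they do, by the compatibility of $f^*$ with $\phi$ discussed in \S2.4), the map is automatically an isomorphism in $F\text{-}\ho{\mathrm{dga}_K}$. So it suffices to forget the algebra and Frobenius structures and check that
\begin{equation}
\R\Gamma_{\mathrm{Th}}(\Omega^*(\mathcal{O}_{X/K}^\dagger))\rightarrow \mathrm{Th}(\R\Gamma_{\mathrm{Th}}(\Omega^*(\mathcal{O}_{Y_\bu/K}^\dagger)))
\end{equation}
is a quasi-isomorphism of complexes.

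Next I would identify both sides cohomologically. By the earlier lemma, $\varphi(\R\Gamma_{\mathrm{Th}}(\Omega^*(\mathcal{O}_{X/K}^\dagger)))\cong \R\Gamma_\mathrm{rig}(X/K)$, so the left-hand side computes the rigid cohomology of $X$. For the right-hand side, I would use Proposition~\ref{Thom}: applying $\mathrm{Th}$ to a cosimplicial dga and then forgetting the algebra structure computes the total complex of the normalized double complex, i.e. the homotopy limit over $\Delta$ of the underlying cosimplicial complex. Thus $\varphi(\mathrm{Th}(\R\Gamma_{\mathrm{Th}}(\Omega^*(\mathcal{O}_{Y_\bu/K}^\dagger))))$ is computed by the total complex of a double complex whose $n$-th column is $\R\Gamma_\mathrm{rig}(Y_n/K)$, and hence by a spectral sequence argument it computes the cohomology of the simplicial scheme $Y_\bu$ in the rigid sense. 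The map in question is then precisely the cohomological descent comparison map $\R\Gamma_\mathrm{rig}(X/K)\rightarrow \R\Gamma_\mathrm{rig}(Y_\bu/K)$.

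Finally, I would invoke proper cohomological descent for rigid cohomology (the same input used in the earlier lemma, e.g. Theorem~7.1.2 of \cite{cohdes2}, applied to proper hyper-coverings): since $Y_\bu\rightarrow X$ is a proper hyper-covering, this comparison map is a quasi-isomorphism, which is exactly what we needed. I expect the only genuinely delicate point to be the bookkeeping in the second step — namely carefully checking that, after applying $\mathrm{Th}$, the underlying complex really is the $\mathrm{Tot}_N$ of the expected double complex and that the resulting map of total complexes is the standard descent map rather than some twist of it. Everything else is formal: the model-categorical reduction to complexes and the appeal to proper cohomological descent are routine given the tools assembled in \S1 and the preceding subsections.
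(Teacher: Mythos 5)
Your proposal is correct and follows essentially the same route as the paper: forget the Frobenius and algebra structures, use Proposition~\ref{Thom} to identify the underlying complex of the Thom--Whitney construction with the total complex of the normalized cosimplicial complex, and conclude by cohomological descent for rigid cohomology along the proper hyper-covering. The paper's proof is just a terser version of exactly this argument.
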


\begin{proof} We may obviously ignore both the $F$-structure, and the algebra structure. But now it follows from Proposition \ref{Thom} together with cohomological descent for rigid cohomology that the induced morphism on cohomology is an isomorphism.
\end{proof}

\begin{remark} The reader might object that $\mathrm{Th}(-)$ does not make sense as a functor on $\ho{\mathrm{dga}_K}^\Delta$. However, this does not matter for us since in the only place where we wish to apply this result (namely Theorem \ref{mixedtheo}) below, we have a specific object of $\mathrm{dga}_K^\Delta$ representing $\R\Gamma_{\mathrm{Th}}(\Omega^*(\mathcal{O}_{Y_\bu/K}^\dagger))\in\ho{\mathrm{dga}_K}^\Delta$.
\end{remark}

We now recall Kim and Hain's definition of mixedness for an $F$-dga over $K$.

\begin{definition} We say that $A\in F\text{-}\mathrm{dga}_K$ is mixed if there exists a quasi-isomorphism $A\simeq B$ in $F\text{-}\mathrm{dga}_K$ and a multiplicative filtration $W^\bu B$ of $B$ such that $H^{p-q}(\mathrm{Gr}^W_p(B))$ is pure of weight $q$ for all $p,q$. We say $A$ is strongly mixed if we can choose the filtration on $A$ itself.
\end{definition}

\begin{lemma} \label{thommixed}Let $A^\bu$ be a cosimplicial $K$-dga with Frobenius action, such that each $A^n$ is strongly mixed. Assume moreover that the cosimplicial structure is compatible with the filtrations. Then $\mathrm{Th}(A^\bu)$ is mixed. 
\end{lemma}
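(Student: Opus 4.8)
The plan is to show that the Thom--Sullivan functor, applied levelwise to the filtrations $W^\bu A^n$, produces a multiplicative filtration on $\mathrm{Th}(A^\bu)$ with the required purity property on associated gradeds, using the fact established in Proposition \ref{Thom} that $\mathrm{Th}$ computes (on underlying complexes) the total complex of the normalized double complex, together with compatibility of Frobenius. First I would set $W^p\mathrm{Th}(A^\bu) := \mathrm{Th}(W^p A^\bu)$, where $W^p A^\bu$ is the cosimplicial sub-dga given in degree $n$ by $W^p A^n$ (this is a cosimplicial object precisely because the cosimplicial structure maps respect the filtrations, by hypothesis). Since $\mathrm{Th}$ is a limit over $\mathscr{M}_\Delta$ of functors of the form $\Omega^*_{\Delta^m_K}\otimes_K A^{*,n}$, and tensoring over the field $K$ with the (flat, finite-rank in each degree) algebras $R_m$ is exact, the inclusions $W^p A^\bu \hookrightarrow A^\bu$ induce inclusions $W^p\mathrm{Th}(A^\bu)\hookrightarrow \mathrm{Th}(A^\bu)$, and one gets $\mathrm{Gr}^W_p\mathrm{Th}(A^\bu)\cong \mathrm{Th}(\mathrm{Gr}^W_p A^\bu)$. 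Multiplicativity of $W^\bu\mathrm{Th}(A^\bu)$ follows from multiplicativity of each $W^\bu A^n$ together with the fact that the product on $\mathrm{Th}$ is induced levelwise from the products on the $\Omega^*_{\Delta^m_K}\otimes_K A^{*,n}$, which send $W^p\otimes W^{p'}$ into $W^{p+p'}$.

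Next I would compute the cohomology of $\mathrm{Gr}^W_p\mathrm{Th}(A^\bu)\cong\mathrm{Th}(\mathrm{Gr}^W_p A^\bu)$. By Proposition \ref{Thom} (applied to the cosimplicial dga $\mathrm{Gr}^W_p A^\bu$), forgetting the algebra structure, this is quasi-isomorphic to $\mathrm{Tot}_N$ of the normalized cosimplicial complex $N(\mathrm{Gr}^W_p A^\bu)$, so there is a spectral sequence
\begin{equation}
E_1^{s,t} = H^t(\mathrm{Gr}^W_p A^s) \Rightarrow H^{s+t}(\mathrm{Th}(\mathrm{Gr}^W_p A^\bu)),
\end{equation}
and this spectral sequence is compatible with Frobenius since $\mathrm{Th}$, the Dold--Kan normalization, and the total complex functor are all compatible with the Frobenius actions (which act on each $A^n$ and commute with the cosimplicial structure). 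By the strong mixedness hypothesis on each $A^s$, the term $H^t(\mathrm{Gr}^W_p A^s)$ is pure of weight $t$ — here I use that $\mathrm{Gr}^W_p$ of a strongly mixed dga has, by definition, $H^{p-q}(\mathrm{Gr}^W_p(A^s))$ pure of weight $q$, i.e. reindexing, $H^t(\mathrm{Gr}^W_p(A^s))$ is pure of weight $p-t$. I should double-check the indexing convention here: with Kim--Hain's convention that $H^{p-q}(\mathrm{Gr}^W_p)$ is pure of weight $q$, the term $E_1^{s,t}$ with $t = p - q$ is pure of weight $q$, independently of $s$. Since Frobenius eigenvalue weights are preserved by passage to subquotients of the spectral sequence, every subquotient $E_r^{s,t}$ is pure of weight $q$ (with $t=p-q$), and hence the abutment $H^{s+t}(\mathrm{Th}(\mathrm{Gr}^W_p A^\bu))$, having a finite filtration with such pure gradeds, is itself pure of weight $q$ — in particular $H^{p-q}(\mathrm{Gr}^W_p\mathrm{Th}(A^\bu))$ is pure of weight $q$, which is exactly the condition for mixedness.

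The main obstacle I anticipate is bookkeeping around the indexing of the filtration degrees versus cohomological degrees versus weights, and in particular making sure that "pure of weight $q$" is genuinely independent of the cosimplicial index $s$ so that the abutment inherits purity cleanly; this is where the hypothesis must be \emph{strong} mixedness levelwise (a filtration on $A^n$ itself, compatible across the cosimplicial structure) rather than mere mixedness, since otherwise the levelwise quasi-isomorphisms to filtered objects need not assemble into a cosimplicial object and the filtration on $\mathrm{Th}$ would not be well-defined. A secondary point requiring care is the exactness of $\mathrm{Th}(-)$ on short exact sequences of cosimplicial dga's: since $\mathrm{Th}$ is a limit it is only left exact in general, but here the relevant sequence $0\to W^pA^\bu\to A^\bu\to A^\bu/W^pA^\bu\to 0$ is a sequence of cosimplicial complexes which is termwise split as graded $K$-modules, and $\mathrm{Th}$ applied to a termwise-split short exact sequence is short exact; I would spell this out, or alternatively argue directly on $\mathrm{Tot}_N\circ N$ via Proposition \ref{Thom}, where exactness is immediate since $N$ and $\mathrm{Tot}_N$ are exact. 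Everything else is routine.
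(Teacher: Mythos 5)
There is a genuine gap, and it is exactly at the point you flagged as needing care: the filtration $W^p\mathrm{Th}(A^\bu):=\mathrm{Th}(W^pA^\bu)$ obtained by applying $\mathrm{Th}$ to the levelwise weight filtration does \emph{not} have pure associated gradeds. In your spectral sequence $E_1^{s,t}=H^t(\mathrm{Gr}^W_pA^s)\Rightarrow H^{s+t}(\mathrm{Th}(\mathrm{Gr}^W_pA^\bu))$, the abutment in the fixed total degree $p-q$ receives contributions from \emph{all} pairs $(s,t)$ with $s+t=p-q$, i.e.\ $t=p-q-s$ for $s=0,1,2,\dots$, not only from $(0,p-q)$. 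By the Kim--Hain convention, $E_1^{s,p-q-s}=H^{p-q-s}(\mathrm{Gr}^W_pA^s)$ is pure of weight $p-(p-q-s)=q+s$, which depends on $s$. So $H^{p-q}(\mathrm{Gr}^W_p\mathrm{Th}(A^\bu))$ carries a finite filtration whose gradeds have weights $q,q+1,q+2,\dots$; it is mixed of weights $\geq q$, not pure of weight $q$. Your sentence ``pure of weight $q$, independently of $s$'' fixes $t=p-q$ and lets $s$ vary, but then the abutment degree $s+t$ varies too, so it does not establish purity of any single cohomology group of $\mathrm{Gr}^W_p\mathrm{Th}(A^\bu)$.

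The missing idea is a d\'ecalage-type shift: one must convolve the levelwise weight filtration $W$ with the filtration $D$ by cosimplicial degree and use $F:=D*W$ rather than $W$ itself. Then $\mathrm{Gr}^F_p$ picks out $\mathrm{Gr}^W_{p-i}A^i$ in cosimplicial degree $i$, and the same total-complex computation gives
\begin{equation}
H^{p-q}(\mathrm{Gr}^F_p\,\mathrm{Tot}_N(A^\bu))\cong\bigoplus_i H^{(p-i)-q}(\mathrm{Gr}^W_{p-i}A^i),
\end{equation}
where now every summand is pure of weight $q$ by the hypothesis on $A^i$ (the shift by $i$ in the filtration index exactly cancels the shift by $i$ in the cohomological degree). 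This also changes how multiplicativity must be handled: the convolution filtration lives naturally on $\mathrm{Tot}_N(A^\bu)$, which is not a dga, so one cannot simply transport it through $\mathrm{Th}$ by exactness as you do; instead one invokes Lemme 6.4 of \cite{nvz1}, which produces a multiplicative filtration on $\mathrm{Th}(A^\bu)$ filtered quasi-isomorphic to $(\mathrm{Tot}_N(A^\bu),D*W)$. Your observations that the cosimplicial compatibility of the filtrations is what makes everything well defined, and that $\mathrm{Th}$ is exact on termwise-split sequences, remain correct and useful, but they are applied to the wrong filtration.
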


\begin{proof} Let us first forget the algebra structure on $A^\bu$, and treat it as just a cosimplicial complex of $K$-modules. We then have two filtrations on $A^\bu$ - one coming from the weight filtration $W$ on each $A^n$, and the other coming from the filtration by simplicial degree. This induces two filtrations $W$ and $D$ on $\mathrm{Tot}_N(A^\bu):=\mathrm{Tot}(N(A^\bu))$ and we define $F$ to be the convolution $D*W$ of these filtrations. We can similarly define the filtration $F$ on the un-normalized total complex $\mathrm{Tot}(A^\bu)$ (where the chain maps in one direction are the alternating sums of the coface maps), and there is a filtered quasi-isomorphism 
\begin{equation}
\mathrm{Tot}(A^\bu)\simeq\mathrm{Tot}_N(A^\bu)
\end{equation}
arising from the usual comparison of $\mathrm{Tot}$ and $\mathrm{Tot}_N$. We can now calculate
\begin{align} H^{p-q}(\mathrm{Gr}^F_p \mathrm{Tot}_N(A^\bu)) &= H^{p-q}(\mathrm{Gr}^F_p \mathrm{Tot}(A^\bu)) \\ &= H^{p-q}( \mathrm{Tot}(\mathrm{Gr}^F_pA^\bu)) \\&= \bigoplus_{i+j=p} H^{p-q} (\mathrm{Tot}(\mathrm{Gr}^D_i\mathrm{Gr}^W_j A^\bu )) \\ &= \bigoplus_{i} H^{p-i-q} (\mathrm{Gr}^W_{p-i}A^i ) 
\end{align} which is pure of weight $q$. Now, to take account of the multiplicative structure on $A^\bu$, we simply use Lemme 6.4 of \cite{NA87}, which says that the complex $\mathrm{Tot}_N(A^\bu)$ considered above, with the filtration $D*W$, is filtered quasi-isomorphic (as a filtered complex) to $\mathrm{Th}(A^\bu)$ with a certain naturally defined multiplicative filtration.
\end{proof}

The proof that the rigid rational homotopy type is mixed is now straightforward.

\begin{theorem} \label{mixedtheo} Let $k$ be a finite field, and $K=\mathrm{Frac}(W(k))$. Let $X$ be a geometrically connected $k$-variety. Then the rational homotopy type $\R\Gamma_{\mathrm{Th}}(\Omega^*(\mathcal{O}^\dagger_{X/K}))$ is mixed.  
\end{theorem}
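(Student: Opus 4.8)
The plan is to carry out the three-step reduction flagged above: pass from $X$ to a log-smooth and proper situation by proper descent, identify the resulting pieces with Kim and Hain's crystalline homotopy types via Proposition~\ref{complog}, quote their mixedness theorem, and then propagate mixedness along the descent using Lemma~\ref{thommixed}.

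First I would invoke de Jong's theorem on the existence of alterations to produce a proper hypercovering $Y_\bu\to X$ such that, for every $n$, the $k$-variety $Y_n$ is the complement $\overline{Y}_n\setminus D_n$ of a strict normal crossings divisor $D_n$ in a smooth and proper $k$-variety $\overline{Y}_n$; write $M_n$ for the log structure on $\overline{Y}_n$ defined by $D_n$. This is the positive-characteristic analogue, due to de Jong, of Deligne's classical construction of good hypercoverings. Since $X$ is geometrically connected but the $Y_n$ need not be, I would first record the evident extensions of Proposition~\ref{complog} and of Kim and Hain's theorem to \emph{finite disjoint unions} of geometrically connected log-smooth and proper pairs: the functor $\R\Gamma_\mathrm{Th}(\Omega^*(\mathcal{O}^\dagger_{-/K}))$ carries a finite disjoint union to a finite product of $F$-dga's, and a finite product of (strongly) mixed $F$-dga's is again (strongly) mixed, with the product of the weight filtrations.

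The substantive point --- and the one I expect to be the main obstacle --- is to package the level-wise comparisons of Proposition~\ref{complog} into a statement about cosimplicial $F$-dga's to which Lemma~\ref{thommixed} applies. Concretely, I need a cosimplicial $F$-dga $A^\bu$ representing $\R\Gamma_\mathrm{Th}(\Omega^*(\mathcal{O}^\dagger_{Y_\bu/K}))\in\ho{\mathrm{dga}_K}^\Delta$ each of whose terms $A^n$ is \emph{strongly} mixed, with cosimplicial structure maps compatible with the weight filtrations. To build it I would run the construction in the proof of Proposition~\ref{complog} level by level along $(\overline{Y}_\bu,M_\bu)$: after a \v{C}ech refinement to reach affines, choose a compatible exact closed immersion of the resulting (bi)simplicial log pair into a smooth simplicial log scheme over $W$, and take the Thom--Sullivan normalization of the associated logarithmic de Rham dga's, equipped with its canonical weight (pole-order) filtration. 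That this filtration is compatible with all the simplicial maps is automatic, since the pole-order filtration is functorial in the log scheme, and at each level strong mixedness is precisely Kim and Hain's level-wise computation; one may also appeal to the uniqueness of the weight filtration for Frobenius, in the spirit of Navarro-Aznar \cite{nvz1}, to check that no information is lost in passing to $F\text{-}\ho{\mathrm{dga}_K}$. The $F$-structure on $A^\bu$ comes for free from the functoriality of Kim and Hain's construction.

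Granting this, the theorem follows. By Proposition~\ref{descenthom} the canonical map
\begin{equation}
\R\Gamma_{\mathrm{Th}}(\Omega^*(\mathcal{O}_{X/K}^\dagger))\longrightarrow \mathrm{Th}(\R\Gamma_{\mathrm{Th}}(\Omega^*(\mathcal{O}_{Y_\bu/K}^\dagger)))
\end{equation}
is an isomorphism in $F\text{-}\ho{\mathrm{dga}_K}$, and by the Remark following that proposition the right-hand side is computed by $\mathrm{Th}(A^\bu)$ for the explicit cosimplicial $F$-dga $A^\bu$ constructed above. Since each $A^n$ is strongly mixed and the cosimplicial structure respects the filtrations, Lemma~\ref{thommixed} shows $\mathrm{Th}(A^\bu)$ is mixed, hence so is $\R\Gamma_{\mathrm{Th}}(\Omega^*(\mathcal{O}^\dagger_{X/K}))$. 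Apart from the compatibility issue above, the only remaining work is the routine bookkeeping of verifying that Proposition~\ref{complog} and Proposition~\ref{descenthom} can be applied with a single fixed hypercovering, with all identifications taking place in $F\text{-}\ho{\mathrm{dga}_K}$ (respectively its cosimplicial analogue).
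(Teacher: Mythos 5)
Your overall architecture is the same as the paper's: de Jong to get a proper hypercovering with level-wise good compactifications, Proposition~\ref{complog} plus Proposition~\ref{descenthom} to identify $\R\Gamma_{\mathrm{Th}}(\Omega^*(\mathcal{O}^\dagger_{X/K}))$ with $\mathrm{Th}$ of the cosimplicial Kim--Hain dga, and Lemma~\ref{thommixed} to conclude. You have also correctly located the main obstacle: producing a cosimplicial $F$-dga whose terms are \emph{strongly} mixed with filtration-compatible cosimplicial maps. But your proposed resolution of that obstacle is where the gap lies. You suggest choosing compatible exact closed immersions into a smooth simplicial log scheme over $W$ and taking ``the canonical weight (pole-order) filtration'' on the lifted logarithmic de Rham dga's, asserting that strong mixedness of this model ``is precisely Kim and Hain's level-wise computation.'' It is not: Kim and Hain's purity theorem (their Theorem 3) is proved for the specific de Rham--Witt model $W\tilde{\omega}[u]$ (more precisely $TW(W\tilde{\omega}[u])$), not for a pole-order filtration on a characteristic-zero lift. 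Transporting purity to your model would require a filtered comparison between the two that is not supplied by Proposition~\ref{complog} (which is an unfiltered quasi-isomorphism) and is nowhere in the cited literature. Moreover, making your choices of lifts and immersions simultaneously compatible across the whole simplicial object is exactly the kind of coherence problem that the de Rham--Witt model avoids, since $TW(W\tilde{\omega}[u])$ is functorial in the log pair with no auxiliary choices; that functoriality, of both the dga and its weight filtration, is what makes the hypotheses of Lemma~\ref{thommixed} verifiable.

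The second point you miss entirely is the base log structure. The strongly mixed model $W\tilde{\omega}[u]$ lives over the \emph{log point}: the variable $u$ encodes the log structure of the punctured point on $\spec{k}$. To use it one must first replace $(\overline{Y}_\bu,M_\bu)$ by its pullback $(\overline{Y}_\bu,M_\bu^\circ)$ along $\spec{k}^\circ\rightarrow\spec{k}$ and check that this does not change the homotopy type, i.e.\ that there is a Frobenius-equivariant level-wise quasi-isomorphism $A_{(\overline{Y}_\bu,M_\bu^\circ)}\cong A_{(\overline{Y}_\bu,M_\bu)}$ of cosimplicial dga's; this holds because the log-crystalline cohomology in Kim--Hain is computed relative to the Teichm\"{u}ller lift of the base log structure. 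Without this step the strongly mixed model you need is simply not available for the pair $(\overline{Y}_n,M_n)$ as it stands. Your remarks on disjoint unions and on uniqueness of the weight filtration are harmless but do not substitute for these two missing ingredients.
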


\begin{proof} By de Jong's theorem on alterations, there exists a proper hyper-covering $Y_\bu\rightarrow X$ such that $X_\bu$ admits a good compactification, that is an embedding $Y_\bu\rightarrow \overline{Y}_\bu$ into a smooth and proper simplicial $k$-scheme with complement a strict normal crossings divisor on each level $\overline{Y}_n$. Let $M_n$ be the log structure associated to this divisor. By Propositions \ref{complog} and \ref{descenthom}, we have a quasi-isomorphism 
\begin{equation}
\R\Gamma_\mathrm{Th}(\Omega^*(\mathcal{O}^\dagger_{X/K}))\cong \mathrm{Th}\left(A_{(\overline{Y}_\bu,M_\bu)}\right)
\end{equation}
of dga's with Frobenius. Let $\spec{k}^\circ$ denote the scheme $\spec{k}$ with the log structure of the punctured point, and let $(\overline{Y}_\bu,M_\bu^\circ)$ denote the pullback of $(\overline{Y}_\bu,M_\bu)$ via the natural morphism $\spec{k}^\circ\rightarrow \spec{k}$. Since log-crystalline cohomology in \cite{KH04} is calculated relative to the log structure induced on $\spec{W(k)}$ via the Teichm\"{u}ller lift from that on $\spec{k}$, it follows that there is a Frobenius invariant, level-wise quasi-isomorphism
\begin{equation} A_{(\overline{Y}_\bu,M_\bu^\circ)}\cong A_{(\overline{Y}_\bu,M_\bu)}
\end{equation}
as cosimplicial dga's. Hence we also have a quasi-isomorphism
\begin{equation}
\R\Gamma_\mathrm{Th}(\Omega^*(\mathcal{O}^\dagger_{X/K}))\cong \mathrm{Th}\left(A_{(\overline{Y}_\bu,M_\bu^\circ)}\right)
\end{equation} 
of dga's with Frobenius. Now, although each $A_{(\overline{Y}_n,M^\circ_n)}$ is not strongly mixed, each is quasi-isomorphic to one that is, let us call it $\tilde{A}_{(\overline{Y}_n,M^\circ_n)}$ (this is the dga $TW(W\tilde{\omega}[u])$ in the notation of \emph{loc. cit.} - note that since we are assuming that $Y$ is smooth, we can work with the dga $W\tilde{\omega}[u]$ rather than $C(W\tilde{\omega}[u])$). This dga is functorial in $(Y,M)$ in exact the same manner as $A_{(\overline{Y},M^\circ)}$. Moreover, the weight filtrations on these dga's are also functorial, and hence the result now follows from Lemma \ref{thommixed} and the corresponding result in the log-smooth and proper case, which is Theorem 3 of \emph{loc. cit}.
\end{proof}

\begin{remark} In what follows we will generally replace $A_{(\overline{Y},M)}$ by this quasi-isomorphic strongly mixed complex; since the latter is functorial in $(\overline{Y},M)$ this will not cause any problems.
\end{remark}

\begin{remark} Strictly speaking, Kim and Hain's definition cannot be applied to $\R\Gamma_\mathrm{Th}(\Omega^*(\mathcal{O}^\dagger_{X/K}))$ since the Frobenius action is only in the homotopy category. However, Theorem 3.47 of \cite{Ols07b} allows us to lift this action to the category $\mathrm{dga}_K$, uniquely up to quasi-isomorphism. Alternatively, since we have a Frobenius action on each dga $A_{(\overline{Y}_n,M_n)}$, we can use this to put a Frobenius action on $\mathrm{Th}(A_{(\overline{Y}_\bu,M_\bu)})$. Proposition  \ref{descenthom} would then say that after applying the functor $F\text{-}\mathrm{dga}_K\rightarrow F\text{-}\ho{\mathrm{dga}_K}$ this is isomorphic to $\R\Gamma_\mathrm{Th}(\Omega^*(\mathcal{O}^\dagger_{X/K}))$.
\end{remark}

If $x\in X(k)$ is a point, then we can use similar methods to the previous section to define an object $\R\Gamma_\mathrm{Th}(\Omega^*(\mathcal{O}^\dagger_{X/K}),x)$ in the homotopy category of augmented $F$-dga's over $K$, where the augmentation comes from `pulling back' to the point $x$. All the above comparison isomorphisms go through in this augmented situation, as does the definition of mixedness. Thus, as in \S6 of \cite{KH04}, if $X/k$ is geometrically connected, then the bar complex $B(\R\Gamma_\mathrm{Th}(\Omega^*(\mathcal{O}^\dagger_{X/K}),x))$ associated to the augmented $F$-dga $\R\Gamma_\mathrm{Th}(\Omega^*(\mathcal{O}^\dagger_{X/K}),x)$ is mixed.

Recall that we define the homotopy groups of $X/k$ by 
\begin{align}
\pi_1^\mathrm{rig}(X,x)&=\mathrm{Spec}(H^0(B(\R\Gamma_\mathrm{Th}(\Omega^*(\mathcal{O}^\dagger_{X/K}),x)))) \\ \pi_n^\mathrm{rig}(X,x)&= (Q H^{n-1}(B(\R\Gamma_\mathrm{Th}(\Omega^*(\mathcal{O}^\dagger_{X/K}),x))))^\vee,\quad n\geq2.
\end{align}
where $Q$ is the functor of indecomposable cohomology classes.

\begin{corollary} Let $X/k$ be a geometrically connected variety, and $x\in X(k)$. Then the rational homotopy groups $\pi_n^\mathrm{rig}(X,x)$ are mixed for all $n\geq 1$.
\end{corollary}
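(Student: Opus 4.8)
The plan is to push the mixedness of the bar complex, established in the discussion immediately preceding the statement, through the formulas $\pi_1^\mathrm{rig}(X,x)=\mathrm{Spec}(H^0(B))$ and $\pi_n^\mathrm{rig}(X,x)=(QH^{n-1}(B))^\vee$ for $n\geq2$, where $B=B(\R\Gamma_\mathrm{Th}(\Omega^*(\mathcal{O}^\dagger_{X/K}),x))$, keeping track of the weight filtration at each stage. By definition of mixedness we may, after replacing $B$ by a quasi-isomorphic $F$-dga, assume $B$ itself carries a multiplicative filtration $W^\bullet B$ with $H^{p-q}(\mathrm{Gr}^W_p B)$ pure of weight $q$. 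Since the bar construction is naturally a Hopf algebra object in $F\text{-}\mathrm{dga}_K$, and --- by Kim and Hain's treatment in \S6 of \cite{derhamwitt} --- the filtration produced above is compatible with this structure, $W^\bullet B$ is a filtration by Hopf dga ideals.

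First I would pass to cohomology. The weight spectral sequence relating $H^*(\mathrm{Gr}^W_\bullet B)$ to $H^*(B)$ has an $E_1$-page assembled from the pure pieces $H^*(\mathrm{Gr}^W_p B)$; over a finite field a Frobenius-equivariant morphism between pure objects of distinct weights vanishes, which forces all higher differentials to be zero and the sequence to degenerate. Consequently the induced filtration $W^\bullet H^n(B)$ satisfies: $\mathrm{Gr}^W_p H^n(B)$ is pure of weight $p-n$. In particular $H^0(B)$ and each $H^{n-1}(B)$ is a mixed $F$-module, with the weight filtration compatible with the algebra and coalgebra structures inherited from $B$.

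Next I would treat the two cases. For $n=1$, the commutative Hopf algebra $H^0(B)$ has spectrum the pro-unipotent group $\pi_1^\mathrm{rig}(X,x)$, and the weight filtration, being by Hopf ideals with all weights $\leq0$, exhibits $\pi_1^\mathrm{rig}(X,x)$ --- equivalently its pro-nilpotent Lie algebra --- as a mixed object. For $n\geq2$, one checks that the indecomposables functor $Q$ is compatible with the weight filtration: since the filtration on the graded-commutative ring $H^\bullet(B)$ is multiplicative, the square $I^2$ of the augmentation ideal $I$ is a filtered submodule, so $\mathrm{Gr}^W_p(QH^{n-1}(B))$ is a quotient of $\mathrm{Gr}^W_p H^{n-1}(B)$, hence pure of weight $p-(n-1)$. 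Thus $QH^{n-1}(B)$ is mixed, and so is its linear dual (the weights merely change sign), which is exactly $\pi_n^\mathrm{rig}(X,x)$.

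The only steps carrying genuine content are the degeneration of the weight spectral sequence --- the usual consequence of the rigidity of Frobenius weights over $\F_q$ --- and the compatibility of the bar construction, and of the functors $H^*$, $Q$ and $(-)^\vee$, with multiplicative filtrations; the first is standard and the second is precisely the package assembled by Kim and Hain, which applies here verbatim once Theorem \ref{mixedtheo} is available in its pointed, augmented form. Everything else is formal functoriality, so I expect the proof to be short.
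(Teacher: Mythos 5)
Your argument is correct and is essentially the paper's: the corollary is treated there as an immediate consequence of the mixedness of the bar complex $B(\R\Gamma_\mathrm{Th}(\Omega^*(\mathcal{O}^\dagger_{X/K}),x))$ established in the preceding paragraph, together with the definitions of $\pi_n^\mathrm{rig}$ via $H^0(B)$ and $(QH^{n-1}(B))^\vee$, and you have simply spelled out the standard details. (One small imprecision: only the differentials $d_r$ for $r\geq 2$ are forced to vanish for weight reasons --- $d_1$ need not be zero --- but degeneration at $E_2$ is all you use, so the conclusion stands.)
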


\begin{remark} For $n=1$ we mean by this that $H^0(B(\R\Gamma_\mathrm{Th}(\Omega^*(\mathcal{O}^\dagger_{X/K}),x)))$ is mixed.
\end{remark}

Although we have proved that there is a mixed structure on the rational homotopy type of a $k$-variety $X$, in order to define such a structure, we chose a log-smooth and proper resolution $(\overline{Y}_\bu,M_\bu)\rightarrow X$ of $X$. Hence \emph{a priori} the filtration that we have on $\R\Gamma_\mathrm{Th}(\Omega^*(\mathcal{O}^\dagger_{X/K}))$ depends on this resolution. Thus the question remains of how `independent' this structure is of the resolution chosen. In order to answer this question, we will need to talk about the different notions of equivalence for filtered dga's, as well as tidying up the slightly sloppy definition of the mixed structure on $\R\Gamma_\mathrm{Th}(\Omega^*(\mathcal{O}^\dagger_{X/K}))$ given above.

\begin{remark} It is fairly simple to show that induced filtration on the rational homotopy groups $\pi_n^\mathrm{rig}(X,x)$ are independent of the chosen resolution, however, we would like a similar result about the whole dga $\R\Gamma_\mathrm{Th}(\Omega^*(\mathcal{O}^\dagger_{X/K}))$. We will then deduce the result about the homotopy groups as a simple corollary.
\end{remark}

Suppose that $f:A\rightarrow B$ is a filtered morphism between filtered dga's. That is $A$ and $B$ are equipped with multiplicative filtrations, and $f$ is compatible with the filtrations. Thus $f$ defines a morphism 
\begin{equation}
E^{\bu,\bu}_1(f):E^{\bu,\bu}_1(A)\rightarrow E^{\bu,\bu}_1(B)
\end{equation} 
between the $E_1$-pages of the spectral sequences associated to the filtrations on $A$ and $B$.

\begin{definition} We say that $f$ is an $E_r$ quasi-isomorphism if $E^{p,q}_{r+1}(f)$ is an isomorphism for all $p,q$.
\end{definition}

\begin{remark} The notion of filtered quasi-isomorphism of filtered complexes used above exactly corresponds to an $E_0$-quasi-isomorphism. It is also worth noting that filtered dga's do not form a model category.
\end{remark}

We want to consider the following categories, as well as the obvious augmented versions.

\begin{itemize} 
\item $F\text{-}\ho{\mathrm{dga}_K}$, the category of $F$-objects in $\ho{\mathrm{dga}_K}$. This is where the rational homotopy type $\R\Gamma_\mathrm{Th}(\Omega^*(\mathcal{O}^\dagger_{X/K}))$ lives;
\item $F^\mathcal{M}\text{-}\mathrm{dga}_K$, - the category of mixed Frobenius dga's over $K$, that is, Frobenius dga's with a filtration such that $H^{q-p}(\mathrm{Gr}_p(-))$ is pure of weight $q$. Owing to the work of Kim and Hain, for $(\overline{Y},D)$  a smooth and proper $k$-variety with strict normal crossings divisor $D$, we can view the rational homotopy type $A_{(\overline{Y},D)}$ functorially as an object in this category;
\item for each $r\geq0$, the category $\mathrm{Ho}_r(F^\mathcal{M}\text{-}\mathrm{dga}_K)$ which is the localization of $F^\mathcal{M}\text{-}\mathrm{dga}_K$ with respect to $E_r$-quasi-isomorphisms;
\end{itemize}

Since an $E_r$-quasi-isomorphism is always a quasi-isomorphism, there are obvious forgetful functors
\begin{equation}
\mathrm{Ho}_r(F^\mathcal{M}\text{-}\mathrm{dga}_K)\rightarrow F\text{-}\ho{\mathrm{dga}_K}
\end{equation}
for each $r$. Choosing a resolution $(\overline{Y}_\bu,D_\bu)\rightarrow X$ of a $k$-variety $X$, we get an isomorphism 
\begin{equation}
\mathrm{Th}(A_{(\overline{Y}_\bu,D_\bu)})\cong\R\Gamma_\mathrm{Th}(\Omega^*(\mathcal{O}^\dagger_{X/K}))
\end{equation}
in $F\text{-}\ho{\mathrm{dga}_K}$. The question then remains, in what sense is $\mathrm{Th}(A_{(\overline{Y}_\bu,D_\bu)})$ independent of the resolution chosen?

\begin{lemma} The object $\mathrm{Th}(A_{(\overline{Y}_\bu,D_\bu)})$ in the localized category $\mathrm{Ho}_1(F^\mathcal{M}\text{-}\mathrm{dga}_K)$ depends only on $X$.
\end{lemma}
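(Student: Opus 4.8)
The plan is to reduce the independence statement to a statement about the diagram of resolutions, and to use the functoriality of the strongly mixed complexes $\tilde{A}_{(\overline{Y}_\bu,D_\bu)}$ together with cohomological descent. First I would observe that any two resolutions $(\overline{Y}_\bu,D_\bu)\to X$ and $(\overline{Y}'_\bu,D'_\bu)\to X$ can be dominated by a common one: given $Y_\bu\to X$ and $Y'_\bu\to X$, form $Y_\bu\times_X Y'_\bu$, then (by de Jong's theorem, as in the proof of Theorem \ref{mixedtheo}) find a proper hypercovering of this fibre product admitting a good compactification, which then maps to both of the original resolutions. Thus it suffices to treat the case where there is a morphism $(\overline{Y}'_\bu,D'_\bu)\to(\overline{Y}_\bu,D_\bu)$ of compactified simplicial resolutions over $X$, and to show that the induced map $\mathrm{Th}(\tilde{A}_{(\overline{Y}_\bu,D_\bu)})\to\mathrm{Th}(\tilde{A}_{(\overline{Y}'_\bu,D'_\bu)})$ is an $E_1$-quasi-isomorphism of mixed Frobenius dga's.

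Next I would analyze the filtration on $\mathrm{Th}(\tilde{A}_{(\overline{Y}_\bu,D_\bu)})$. As in the proof of Lemma \ref{thommixed}, after forgetting the multiplicative structure this complex is filtered quasi-isomorphic to $\mathrm{Tot}_N(\tilde{A}_{(\overline{Y}_\bu,D_\bu)})$ with the convolution filtration $D*W$ of the simplicial-degree filtration $D$ and the weight filtration $W$; and by Lemme 6.4 of \cite{nvz1} this filtered quasi-isomorphism can be taken to respect the multiplicative filtrations, so it is in particular an $E_0$-, hence $E_1$-, equivalence. So I may replace $\mathrm{Th}$ by $\mathrm{Tot}_N$ throughout, and I must show that $\mathrm{Tot}_N(\tilde{A}_{(\overline{Y}_\bu,D_\bu)})\to\mathrm{Tot}_N(\tilde{A}_{(\overline{Y}'_\bu,D'_\bu)})$ is an $E_1$-quasi-isomorphism for the convolution filtration. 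Since $E_1$ of the convolution filtration on a total complex is built, via a spectral sequence of the double complex, out of the $E_1$-pages $\mathrm{Gr}^W_\bu$ in each simplicial degree, and since the weight filtrations on the $\tilde{A}_{(\overline{Y}_n,D_n)}$ are functorial (by the cited functoriality of $W\tilde{\omega}[u]$ from \cite{derhamwitt}), the map on $E_1$ is the map induced on the $\mathrm{Gr}^W$-cohomology in each simplicial degree, assembled into a cosimplicial complex and then totalized. The $\mathrm{Gr}^W_j$-cohomology of $\tilde{A}_{(\overline{Y}_n,D_n)}$ is, by the Kim--Hain weight computation, a direct sum of (Tate twists of) rigid cohomology groups of the closed strata of the normal crossings divisor $D_n$; so the map on $E_1$-pages is, level-wise in the simplicial direction, a map of such cohomology groups induced by the morphism of resolutions. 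Cohomological descent for rigid cohomology (applied stratum-by-stratum, exactly as in Proposition \ref{descenthom}) then shows the totalized map is a quasi-isomorphism, i.e.\ $E^{p,q}_2$ is an isomorphism, which is precisely the $E_1$-quasi-isomorphism condition. Finally, Frobenius-equivariance is automatic since every map in sight was produced by a Frobenius-compatible morphism of $F$-dga's.

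The main obstacle I expect is bookkeeping the convolution filtration carefully enough to justify the claim that ``$E_1$ of the totalized, convolved object is the totalization of the $\mathrm{Gr}^W$-cohomologies'' — that is, controlling the spectral sequence of the double complex $(\text{simplicial degree})\times(\text{internal degree})$ with respect to the filtration $D*W$, and making sure that the functoriality of $W$ on each $\tilde{A}_{(\overline{Y}_n,D_n)}$ really does produce a morphism of cosimplicial \emph{filtered} complexes (so that the morphism of $E_1$-pages is the expected cosimplicial map). Once that is in place, the rest is a routine combination of the Kim--Hain weight computation, functoriality, and cohomological descent for rigid cohomology, all of which are available to us. I would therefore structure the write-up as: (i) reduction to a morphism of resolutions via de Jong; (ii) replacement of $\mathrm{Th}$ by $\mathrm{Tot}_N$ with convolution filtration via Lemme 6.4 of \cite{nvz1}; (iii) identification of the $E_1$-map with a totalized cosimplicial map on strata cohomology; (iv) conclusion by cohomological descent, with Frobenius carried along for free.
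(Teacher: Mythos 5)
Your steps (i)–(iii) are reasonable, but the argument diverges from the paper at the decisive point, and the step where it diverges contains a genuine gap. The paper's proof is much shorter: after reducing to a morphism of resolutions via a common refinement, it simply observes that \emph{any} quasi-isomorphism between mixed Frobenius dga's is automatically an $E_1$-quasi-isomorphism. The reason is that mixedness --- purity of weight $q$ for $H^{p-q}(\mathrm{Gr}^W_p)$ --- forces the weight spectral sequence to degenerate at $E_2$: the higher differentials are Frobenius-equivariant maps between pieces of distinct pure weights, hence vanish. Consequently $E_2=E_\infty$, the induced filtration on cohomology is characterized intrinsically by Frobenius eigenvalues, a quasi-isomorphism is automatically strictly compatible with it, and the induced maps on $E_2$ are isomorphisms. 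No further geometric input is needed beyond the quasi-isomorphism in $F\text{-}\ho{\mathrm{dga}_K}$ that is already established.

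The gap in your version is step (iv). You propose to prove that the map on $E_2$-pages is an isomorphism by applying cohomological descent for rigid cohomology ``stratum-by-stratum'' to the closed strata of the normal crossings divisors. But a morphism of resolutions $(\overline{Y}'_\bu,D'_\bu)\rightarrow(\overline{Y}_\bu,D_\bu)$ over $X$ does not restrict to proper hypercoverings of the strata of $D_\bu$ by strata of $D'_\bu$: the boundary divisors of two different compactifications are genuinely different varieties, components of $D'_n$ need not map to strata of $D_n$ of matching codimension, and the induced maps on the $E_1$-terms involve Gysin-type maps rather than pullbacks along hypercoverings. So cohomological descent in the style of Proposition \ref{descenthom} does not apply, and there is no direct geometric identification of the two $E_1$-pages. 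The identification of the $E_2$-pages really is a consequence of weight purity and Frobenius equivariance, not of descent; this is exactly the point where the argument must go through the degeneration/strictness mechanism. I would replace (ii)--(iv) with that one-line purity argument.
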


\begin{remark}Hence, we may view $ \R\Gamma_\mathrm{Th}(\Omega^*(\mathcal{O}^\dagger_{X/K}))$ canonically as an object of $\mathrm{Ho}_1(F^\mathcal{M}\text{-}\mathrm{dga}_K)$.
\end{remark}

\begin{proof} Since for any two resolutions, we can find a third mapping to both, it suffices to prove that any quasi-isomorphism between mixed complexes is in fact an $E_1$-quasi-isomorphism. But this follows easily from the fact that the spectral sequence degenerates at the $E_2$-page.
\end{proof}

Of course, in the same manner, for any rational point $x\in X(k)$ we can view the augmented dga $ \R\Gamma_\mathrm{Th}(\Omega^*(\mathcal{O}^\dagger_{X/K}),x)$ as an object in the category \begin{equation}
\mathrm{Ho}_1(F^\mathcal{M}\text{-}\mathrm{dga}^*_K)
\end{equation}
where the `$*$' refers to the fact that we are considering augmented dga's. 

Let $\mathrm{DGA}_K$ denote the category of commutative dga's over $K$ that are not necessarily concentrated in non-negative degrees, we will use similar notation for unbounded mixed Frobenius dga's.  As proved in \S6 of \cite{KH04}, the bar construction for dga's can be extended to a functor 
\begin{equation}
B: F^\mathcal{M}\text{-}\mathrm{dga}^*_K\rightarrow F^\mathcal{M}\text{-}\mathrm{DGA}_K
\end{equation}
or in other words, the bar complex of a mixed, augmented Frobenius dga is a mixed Frobenius dga. We can also consider the cohomology functor
\begin{equation}
H^*: F^\mathcal{M}\text{-}\mathrm{dga}_K\rightarrow F^\mathcal{M}\text{-}\mathrm{dga}_K
\end{equation}
as well as the corresponding version for unbounded dga's. We let $\mathrm{Ho}_1^\mathrm{con}(F^\mathcal{M}\text{-}\mathrm{dga}_K^*)$ denote the localized category of dga's with connected cohomology.

\begin{lemma} We have factorizations
\begin{equation}
B:\mathrm{Ho}_1^\mathrm{con}(F^\mathcal{M}\text{-}\mathrm{dga}_K^*)\rightarrow \mathrm{Ho}_1(F^\mathcal{M}\text{-}\mathrm{DGA}_K)
\end{equation}  
\begin{equation}
H^*:\mathrm{Ho}_1(F^\mathcal{M}\text{-}\mathrm{dga}_K)\rightarrow F^\mathcal{M}\text{-}\mathrm{dga}_K
\end{equation}
\begin{equation}
H^*:\mathrm{Ho}_1(F^\mathcal{M}\text{-}\mathrm{DGA}_K)\rightarrow F^\mathcal{M}\text{-}\mathrm{DGA}_K
\end{equation}
\end{lemma}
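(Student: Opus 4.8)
The strategy is uniform across the three cases: show that the functor carries $E_1$-quasi-isomorphisms to isomorphisms in the stated target (for the two $H^*$-statements the target is not localized, so one must produce genuine isomorphisms of mixed Frobenius dga's). Two facts already exploited in the proof of the previous lemma do most of the work. First, the weight spectral sequence of any mixed $F$-(D)GA degenerates at $E_2$: for $r\geq 2$ the differential $d_r$ strictly changes weights, so it vanishes. Second, and consequently, for a filtered morphism $f$ between mixed objects the following are equivalent: $f$ is an $E_1$-quasi-isomorphism; $E_2(f)$ is an isomorphism; $\mathrm{Gr}^W H^*(f)$ is an isomorphism (via degeneration, $E_2=E_\infty=\mathrm{Gr}^W H^*$); and $H^*(f)$ is an isomorphism in $F^\mathcal{M}\text{-}\mathrm{dga}_K$ (the weight filtrations being exhaustive and bounded below in each degree, a filtered map inducing an isomorphism on associated graded is a strict isomorphism, whose inverse is automatically filtered, multiplicative and Frobenius-equivariant). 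In particular, on mixed objects ``quasi-isomorphism'' and ``$E_1$-quasi-isomorphism'' coincide.

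Granting this, the two $H^*$-factorizations are immediate. The functor $H^*$ is already known to land in $F^\mathcal{M}\text{-}\mathrm{dga}_K$, resp. $F^\mathcal{M}\text{-}\mathrm{DGA}_K$: degeneration gives $\mathrm{Gr}^W_p H^n(A)\cong E_2^{p,n-p}$, a subquotient of $H^n(\mathrm{Gr}^W_p A)$, which is pure of weight $p-n$, so $H^*(A)$ with zero differential is mixed, and the same computation applies to unbounded dga's. If $f$ is an $E_1$-quasi-isomorphism of mixed $F$-(D)GA's, then by the equivalences above $H^*(f)$ is an isomorphism of mixed Frobenius dga's; hence $H^*$ inverts $E_1$-quasi-isomorphisms and therefore factors through $\mathrm{Ho}_1$ in both versions.

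For the bar construction, let $f\colon A\to A'$ be an $E_1$-quasi-isomorphism in $F^\mathcal{M}\text{-}\mathrm{dga}_K^*$ between objects with connected cohomology. By \S6 of \cite{derhamwitt} the objects $B(A)$ and $B(A')$ are mixed Frobenius dga's and $B(f)$ is a morphism of such, so by the first paragraph it suffices to show $B(f)$ is a quasi-isomorphism. This is the classical homotopy invariance of the reduced bar construction on augmented dga's with connected cohomology: one filters $B(-)$ by bar length, a filtration which is exhaustive and bounded below and hence yields a convergent spectral sequence whose $E_1$-term is built from tensor powers of the reduced cohomology $H^{>0}(-)$; since $f$ induces an isomorphism on reduced cohomology it induces an isomorphism on $E_1$, hence on the abutment. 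Thus $B(f)$ is a quasi-isomorphism, hence an $E_1$-quasi-isomorphism, and $B$ factors through $\mathrm{Ho}_1$.

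The only genuine subtlety is this last step: homotopy invariance of the bar construction requires the bar-length spectral sequence to converge, which is precisely where the connectivity hypothesis is used, and one must know that the resulting quasi-isomorphism is automatically compatible with the weight filtration and Frobenius. I sidestep tracking the weight filtration through the bar complex directly by instead invoking that $B(A)$ and $B(A')$ are mixed together with the degeneration argument of the first paragraph, which upgrades the bare quasi-isomorphism $B(f)$ to an $E_1$-quasi-isomorphism at no extra cost.
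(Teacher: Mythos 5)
Your proposal is correct and follows essentially the same route as the paper: the first factorization is obtained by combining homotopy invariance of the bar construction on augmented dga's with connected cohomology with the observation (already used for the previous lemma) that any quasi-isomorphism between mixed objects is automatically an $E_1$-quasi-isomorphism by degeneration of the weight spectral sequence at $E_2$, while the $H^*$ factorizations are handled by the same degeneration/purity considerations. The paper is merely terser, citing the bar-construction fact without the bar-length filtration argument and calling the $H^*$ cases ``easy'' (noting they hold for $\mathrm{Ho}_r$ for every $r\geq 0$, which your convergence argument also gives without needing degeneration).
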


\begin{proof} By the proof of the previous lemma (any quasi-isomorphism between mixed complexes is an $E_1$-quasi-isomorphism - this applies to unbounded dga's as well), the first factorization follows from the fact that the bar complex sends quasi-isomorphisms between dga's with connected cohomology to quasi-isomorphisms. The seconds and third factorizations are easy, and in fact hold with $\mathrm{Ho}_1$ replaced by $\mathrm{Ho}_r$ for any $r\geq0$.
\end{proof}

\begin{corollary} Let $X/k$ be geometrically connected. Then the mixed structures on the cohomology ring $H^*_\mathrm{rig}(X/K)\in F^\mathcal{M}\text{-}\mathrm{dga}_K$ and the homotopy groups $\pi_n^\mathrm{rig}(X,x)$, $n\geq1$ for any $x\in X(k)$, are independent of the resolution chosen.
\end{corollary}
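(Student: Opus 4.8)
The plan is to simply assemble the functoriality statements already in hand. By the penultimate lemma, $\mathrm{Th}(A_{(\overline{Y}_\bu,D_\bu)})\cong\R\Gamma_\mathrm{Th}(\Omega^*(\mathcal{O}^\dagger_{X/K}))$ is canonically an object of $\mathrm{Ho}_1(F^\mathcal{M}\text{-}\mathrm{dga}_K)$, and the same argument applied to the resolutions pulled back along $x$ shows that $\R\Gamma_\mathrm{Th}(\Omega^*(\mathcal{O}^\dagger_{X/K}),x)$ is canonically an object of $\mathrm{Ho}_1(F^\mathcal{M}\text{-}\mathrm{dga}^*_K)$. So all that remains is to push these canonical objects through the functors that compute cohomology and homotopy groups. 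For the cohomology ring, I would apply the factorization $H^*:\mathrm{Ho}_1(F^\mathcal{M}\text{-}\mathrm{dga}_K)\to F^\mathcal{M}\text{-}\mathrm{dga}_K$ of the last lemma to the canonical object $\R\Gamma_\mathrm{Th}(\Omega^*(\mathcal{O}^\dagger_{X/K}))$; its image is $H^*_\mathrm{rig}(X/K)$ with its mixed structure, which is therefore resolution-independent because the source object is.

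For the homotopy groups, I would first use geometric connectedness of $X$, which gives $H^0_\mathrm{rig}(X/K)=K$, so that the augmented dga has connected cohomology and the bar functor $B:\mathrm{Ho}_1^\mathrm{con}(F^\mathcal{M}\text{-}\mathrm{dga}_K^*)\to\mathrm{Ho}_1(F^\mathcal{M}\text{-}\mathrm{DGA}_K)$ of the last lemma applies. Applied to the canonical augmented object it produces $B(\R\Gamma_\mathrm{Th}(\Omega^*(\mathcal{O}^\dagger_{X/K}),x))$ canonically in $\mathrm{Ho}_1(F^\mathcal{M}\text{-}\mathrm{DGA}_K)$, and then $H^*:\mathrm{Ho}_1(F^\mathcal{M}\text{-}\mathrm{DGA}_K)\to F^\mathcal{M}\text{-}\mathrm{DGA}_K$ (the third factorization) gives a mixed Frobenius DGA independent of the resolution. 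The homotopy groups are recovered from its graded pieces via $\pi_1^\mathrm{rig}(X,x)=\mathrm{Spec}(H^0(B(-)))$ and $\pi_n^\mathrm{rig}(X,x)=(QH^{n-1}(B(-)))^\vee$, so it remains only to observe that the indecomposables functor $Q$, linear duality, and $\mathrm{Spec}$ of the commutative Hopf algebra $H^0$ all carry mixed structures to mixed structures and are functorial: $QH^{n-1}=I/I^2$ is a subquotient of a mixed Frobenius vector space and the category of such is abelian, duality sends pure of weight $q$ to pure of weight $-q$, and $\mathrm{Spec}$ endows the resulting (pro-)algebraic group with a Frobenius-stable weight filtration on its coordinate ring.

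The only real work is the bookkeeping in this last step — checking that passing from the mixed Frobenius DGA $H^*(B(-))$ to the pro-algebraic homotopy groups respects the mixed structures and does not reintroduce any dependence on the chosen resolution. But since after applying $H^*$ we are in the honest, non-localized category $F^\mathcal{M}\text{-}\mathrm{DGA}_K$, functoriality there is automatic, and the verification that $Q$, $(-)^\vee$ and $\mathrm{Spec}$ preserve mixedness is routine, relying only on the abelian-category properties of mixed Frobenius modules over $K$. Hence the corollary follows.
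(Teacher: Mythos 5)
Your proposal is correct and is essentially the argument the paper intends: the corollary is stated as an immediate consequence of the two preceding lemmas, obtained by applying the factorized functors $H^*$ and $B$ to the canonical objects of $\mathrm{Ho}_1(F^\mathcal{M}\text{-}\mathrm{dga}_K)$ and $\mathrm{Ho}_1(F^\mathcal{M}\text{-}\mathrm{dga}^*_K)$, exactly as you do. Your closing remarks on connectedness of the cohomology and on $Q$, duality and $\mathrm{Spec}$ preserving mixedness fill in bookkeeping the paper leaves implicit, and are fine.
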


\subsection{Homotopy obstructions}\label{homobssec}

We now briefly discuss a crystalline homotopy obstruction to the existence of maps between varieties, and of sections of maps between $k$-varieties, which is nothing more than an application of the functoriality of the previous section. For any variety $X/k$, $\R\Gamma_\mathrm{Th}(\Omega^*(\mathcal{O}_{X/K}^\dagger))$ is naturally an object of $F\text{-}\ho{\mathrm{dga}_K}$, and hence for any two varieties $X,Y$ we can consider the set 
\begin{equation}
 [\R\Gamma_\mathrm{Th}(\Omega^*(\mathcal{O}_{X/K}^\dagger)),\R\Gamma_\mathrm{Th}(\Omega^*(\mathcal{O}_{Y/K}^\dagger))]_{F\text{-}\ho{\mathrm{dga}_K}}
\end{equation}
of morphisms $\R\Gamma_\mathrm{Th}(\Omega^*(\mathcal{O}_{X/K}^\dagger))\rightarrow \R\Gamma_\mathrm{Th}(\Omega^*(\mathcal{O}_{Y/K}^\dagger))$ in $F\text{-}\ho{\mathrm{dga}_K}$. Functoriality will induce a map 
\begin{equation}
\mathrm{Mor}_{\mathrm{Sch}/k}(Y,X)\rightarrow [\R\Gamma_\mathrm{Th}(\Omega^*(\mathcal{O}_{X/K}^\dagger)),\R\Gamma_\mathrm{Th}(\Omega^*(\mathcal{O}_{Y/K}^\dagger))]_{F\text{-}\ho{\mathrm{dga}_K}}.
\end{equation} 
and we can use this to study the set of maps from $Y$ to $X$. Of course, if we are given a map $X\rightarrow Y$, then we can use a similar approach to study sections of this map. 

We will not pursue this idea, since we actually wish to develop a more refined homotopical approach to studying sections. To motivate why this better approach is needed, consider the morphism
\begin{equation}
\A^1_k\rightarrow \A^1_k,\quad x\mapsto x^2
\end{equation} 
which clearly does not have a section. However, we cannot detect this on the level of rational homotopy types, since $\R\Gamma_\mathrm{Th}(\Omega^*(\mathcal{O}^\dagger_{\A^1_k/K}))=K$. Instead, we will develop a relative rational homotopy type which will associate to any morphism $X\rightarrow Y$ a dga on $Y$ (in a sense that will be made clear later) in a functorial manner. Before we do so, however, we will first give an alternative perspective on the rigid rational homotopy type.

\section{Overconvergent sheaves and homotopy types}\label{overcon}

In this section we wish to describe a different way to construct the rational homotopy type of a $k$-variety $X$, using the theory of modules on a certain `overconvergent' site attached to $X/K$, as developed byLLe Stum. To motivate this slightly altered perspective, it may be helpful to discuss the analogous situation in characteristic zero. So let $X/\C$ be a smooth, proper algebraic variety, then the rational homotopy type of $X$ is defined to be $\R\Gamma_\mathrm{Zar}(\Omega^*_X)$, using similar methods to those we have seen already. Why does this give the `right' answer? 

The reason is that after passing to the analytic topology of $X$, $\Omega^*_X$ is quasi-isomorphic, as a dga, to the constant sheaf of dga's $\underline{\C}$, and standard theorems comparing Zariski and analytic cohomology of coherent sheaves will then give us an isomorphism $\R\Gamma_\mathrm{an}(\underline{\C})\cong\R\Gamma_\mathrm{Zar}(\Omega^*_X)$ in $\ho{\mathrm{dga}_\C}$. The former is then the `correct' rational homotopy type of $X$, essentially because of Th\'{e}or\`{e}me 5.5 of \cite{NA93}. So we have a functor $\R\Gamma_\mathrm{an}:\ho{\mathrm{dga}(X^\mathrm{an};\C)}\rightarrow \ho{\mathrm{dga}_\C}$, and $\R\Gamma_\mathrm{Zar}(\Omega^*_X)$ gives us a way of \emph{computing} $\R\Gamma_\mathrm{an}(\underline{\C})$ in an algebraic fashion. 

There is now an obvious third candidate for defining the rational homotopy type of $X$ - we consider the constant crystal $\mathcal{O}_{X/\C}$ on the infinitesimal site of $X/\C$, and simply take $\R\Gamma_\mathrm{inf}(\mathcal{O}_{X/\C})$ in the sense of dga's, rather than complexes. We can then trace through Grothendieck's comparison theorems to show that this is naturally isomorphic to $\R\Gamma_\mathrm{Zar}(\Omega^*_X)$ in $\ho{\mathrm{dga}_\C}$. This can now be easily transposed into positive characteristic, since the infinitesimal site has a good analogue in rigid cohomology, the overconvergent site of Le Stum. Thus, we are led to give a second definition of the rigid rational homotopy type, namely as $\R\Gamma(\mathcal{O}^\dagger_{X/K})$, where $\mathcal{O}_{X/K}^\dagger$ is the constant crystals on the overconvergent site, and $\R\Gamma$ is taken in the sense of dga's.

\subsection{The overconvergent site}

We now recall the definition of Le Stum's overconvergent site, and give a new definition of the rational homotopy type. The main reference is \cite{LS11}. We will systematically consider analytic spaces in the sense of Berkovich, and we will call an analytic variety over $K$ a locally Hausdorff, good, strictly $K$-analytic space. If $V$ is an analytic variety, then we will denote by $V_0$ the underlying rigid space, and $\pi_V:V_0\rightarrow V$ the natural map. If $\mathscr{P}$ is a formal $\mathcal{V}$-scheme, then $\mathscr{P}_K$ will denote its Berkovich generic fibre, and $\mathscr{P}_{K0}$ its rigid generic fibre. (This is the reason for putting $0$'s everywhere in the previous section). Recall that a Berkovich space is called good if every point has an affinoid neighbourhood.

\begin{definition} An overconvergent variety over $\mathcal{V}$ consists of the data of a $k$-variety $X$, a formal $\mathcal{V}$-scheme $\mathscr{P}$ and an analytic $K$-variety $V$, together with an embedding $X\subset \mathscr{P}$ of formal $\mathcal{V}$-schemes, and a morphism $\lambda:V\rightarrow \mathscr{P}_K$ of Berkovich spaces. An overconvergent variety will often be denoted $(X\subset \mathscr{P}\leftarrow V)$. A morphism of overconvergent varieties is a commutative diagram 
\begin{equation}
\xymatrix{
X' \ar[d]^{f} \ar@{^{(}->}[r] &\mathscr{P}'\ar[d]^{v} & \ar[l]_{\mathrm{sp}} \ar[d]^{v_K} \mathscr{P}'_K & \ar[d]^{u} \ar[l]_{\lambda'} V' \\
X \ar@{^{(}->}[r] &\mathscr{P} & \ar[l]_{\mathrm{sp}}\mathscr{P}_K & \ar[l]_{\lambda} V
}
\end{equation}
and the category of overconvergent varieties over $\mathcal{V}$ is denoted $\mathrm{An}(\mathcal{V})$.
\end{definition}

For $(X\subset \mathscr{P}\leftarrow V)$ an overconvergent variety, define the tube $]X[_V=(\mathrm{sp}\circ\lambda)^{-1}(X)\subset V$. Denote by $i_{X,V}:]X[_V\rightarrow V$ the natural inclusion (we will often write $i_X$ instead). A morphism of overconvergent varieties is called a strict neighbourhood if $X=X'$, $\mathscr{P}=\mathscr{P}'$, $u:V'\rightarrow V$ is the inclusion of an open neighbourhood of $]X[_V$ in $V$, and $]X[_{V'}=]X[_V$. In \emph{loc. cit.}, Le Stum proves that the category $\mathrm{An}(\mathcal{V})$ admits calculus of right fractions with respect to strict neighbourhoods, and denotes by $\mathrm{An}^\dagger(\mathcal{V})$ the localized category.

The category $\mathrm{An}(\mathcal{V})$ admits a topology coming from the analytic topology of $V$, and this induces a topology on $\mathrm{An}^\dagger(\mathcal{V})$, called the analytic topology. Since the formal scheme $\mathscr{P}$ plays less of a role in the category $\mathrm{An}^\dagger(\mathcal{V})$, we usually denote objects by, for example, $(X,V)$. The functor $(X,V)\mapsto \Gamma(]X[_V,i_{X,V}^{-1}\mathcal{O}_V)$ is then well defined, and is a sheaf of $\mathrm{An}^\dagger(\mathcal{V})$, denoted $\mathcal{O}_{\mathcal{V}}^\dagger$ and called the sheaf of overconvergent functions.

Fix some object $(C,O)$ of $\mathrm{An}^\dagger(\mathcal{V})$, and consider the restricted category $\mathrm{An}^\dagger(C,O)$ of all objects of $\mathrm{An}^\dagger(\mathcal{V})$ over $(C,O)$. Denote by $j_{C,O}:\mathrm{An}^\dagger(C,O)\rightarrow \mathrm{An}^\dagger(\mathcal{V})$ the corresponding morphism of sites. Le Stum defines a morphism of sites $I_{C,O}:\mathrm{Sch}(C)\rightarrow \mathrm{An}^\dagger(C,O)$ (where $\mathrm{Sch}(C)$ is given the coarse topology), given by $I_{C,O}^{-1}(X,V)=X$.

\begin{definition} Let $X$ be an algebraic variety over $C$, let $\underline{X}$ be the corresponding representable sheaf on the site $\mathrm{Sch}(C)$. Then the sheaf of overconvergent varieties over $X$ above $(C,O)$ is by definition $X/O:={j_{C,O}}_!{I_{C,O}}_*\underline{X}$. The site $\mathrm{An}^\dagger(X/O)$ is the restricted site of objects of $\mathrm{An}^\dagger(\mathcal{V})$ over $X/O$, and the corresponding topos is denoted $(X/O)_{\mathrm{An}^\dagger}$. The restriction of $\mathcal{O}_{\mathcal{V}}^\dagger$ to $\mathrm{An}^\dagger(X/O)$ will be denoted $\mathcal{O}_{X/O}^\dagger$. 
\end{definition}

For any morphism of $C$-varieties $f:X\rightarrow Y$ there is a morphism of sheaves $X/O\rightarrow Y/O$ and hence a morphism of topoi $f_{\mathrm{An}^\dagger}:(X/O)_{\mathrm{An}^\dagger}\rightarrow (Y/O)_{\mathrm{An}^\dagger}$. Since $f_{\mathrm{An}^\dagger}^{-1}(\mathcal{O}_{Y/O}^\dagger)=\mathcal{O}_{X/O}^\dagger$, this naturally becomes a morphism of ringed topoi. Letting $p:X\rightarrow C$ denote the structure morphism of a $C$-variety $X$, the functor $O'\mapsto (C,O')$ defines a morphism of sites $\mathrm{An}^\dagger(C,O)\rightarrow\mathrm{Open}(]C[_O)$, and hence we can consider the composite morphism of topoi $p_{X/O}:(X/O)_{\mathrm{An}^\dagger}\rightarrow (C,O)_{\mathrm{An}^\dagger}\rightarrow ]C[_O^{\mathrm{an}}$. 

If $(C,O)$ is an overconvergent variety, then we will denote by $(-)^\mathrm{an}$ the derived push-forward functor $\R\pi_*$ for the morphism of ringed spaces 
\begin{equation}
\pi:(]\overline{C}[_{O0},j^\dagger\mathcal{O}_{]\overline{C}[_{O0}})\rightarrow (]\overline{C}[_{O}, i_{C*}i_C^{-1}\mathcal{O}_{]\overline{C}[_{O}})
\end{equation}
where $\overline{C}$ denote the closure of $C$ inside the `unmentioned' formal scheme of $(C,O)$. The main results of \emph{loc. cit.} are the following. 

\begin{theorem} \label{maincomp}(\cite{LS11}, Theorem 3.6.7). Let $\mathscr{S}$ be a good formal $\mathcal{V}$-scheme, and consider the object $(\mathscr{S}_k,\mathscr{S}_K)$ of $\mathrm{An}^\dagger(\mathcal{V})$. Let $X$ be an algebraic variety over $\mathscr{S}_k$, with structure morphism $p$. 
\begin{enumerate} \item There is a canonical equivalence between the category of finitely presented $\mathcal{O}_{X/\mathscr{S}_K}^\dagger$-modules and the category of overconvergent isocrystals on $X/\mathscr{S}$.
\item For any overconvergent isocrystal $E$ on $X/\mathscr{S}$, there is an isomorphism $(\mathbb{R}p_{X/\mathscr{S},\mathrm{rig}*} E)^\mathrm{an} \cong \mathbb{R}p_{X/\mathscr{S}_K*}E$.
\end{enumerate} 
\end{theorem}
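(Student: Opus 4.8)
The plan is to prove both parts by reducing to a \emph{local model} --- one in which $X$ admits a frame $(X,\overline{X},\mathscr{P})$ with $\mathscr{P}$ smooth over $\mathscr{S}$ --- and then passing to the general case by cohomological descent along a simplicial frame. Throughout, the bridge between the overconvergent site and classical rigid-cohomological objects is the ``realization'' functor, which compares a sheaf (or crystal) on $\mathrm{An}^\dagger(X/\mathscr{S}_K)$ with its restriction to the tube $]\overline{X}[_{\mathscr{P}_K}$ attached to such a frame.

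For part (1), one observes that a finitely presented $\mathcal{O}_{X/\mathscr{S}_K}^\dagger$-module is a sheaf on $\mathrm{An}^\dagger(X/\mathscr{S}_K)$, hence is determined by its values on a generating family of objects together with descent data. When $(X,\overline{X},\mathscr{P})$ is a smooth frame, the object $(X,\mathscr{P}_K)$ of the site is such a local chart, and the restriction there is a coherent module over the (overconvergent) structure sheaf on the relevant tube. I would then reconstruct the connection exactly as in Grothendieck's dictionary between crystals and modules-with-connection: the two projections $\mathscr{P}\times_{\mathscr{S}}\mathscr{P}\rightrightarrows\mathscr{P}$ together with the diagonal $X\hookrightarrow\mathscr{P}\times_{\mathscr{S}}\mathscr{P}$ give, via the sheaf condition, an isomorphism between the two pullbacks of the coherent sheaf over the tube of the diagonal; unwinding this produces an integrable connection, integrability coming from the cocycle condition over $\mathscr{P}^{\times_{\mathscr{S}}3}$ and overconvergence from the geometry of the tubes. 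Independence of the chosen frame, together with functoriality, shows that the result lands in Berthelot's category of overconvergent isocrystals, and full faithfulness together with essential surjectivity are checked chart by chart against Berthelot's definition.

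For part (2), fix the local frame and factor $p_{X/\mathscr{S}_K}$ through the morphism of topoi $\varphi\colon (X/\mathscr{S}_K)_{\mathrm{An}^\dagger}\to\,]\overline{X}[_{\mathscr{P}_K}$ given by realization. The crux --- and the step I expect to be the main obstacle --- is a Poincar\'e lemma on the overconvergent site: for $E$ an overconvergent isocrystal with associated sheaf $\mathcal{E}$, one must show that $\mathbb{R}\varphi_*\mathcal{E}$ is the overconvergent de Rham complex $\mathcal{E}\otimes\Omega^*$ on $]\overline{X}[_{\mathscr{P}_K}$. This requires a functorial resolution of $\mathcal{E}$ on the site whose $\varphi$-pushforward is the de Rham complex; concretely one linearizes the overconvergent structure using the objects built from tubes of $X$ inside the powers $\mathscr{P}^{\times_{\mathscr{S}}(n+1)}$, together with an explicit contracting homotopy on overconvergent functions on these tubes --- and it is precisely here that the overconvergence hypotheses and Le Stum's calculus of strict neighbourhoods are indispensable. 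Granting the Poincar\'e lemma, composing with the pushforward to $]\mathscr{S}_k[_{\mathscr{S}_K}=\mathscr{S}_K$ exhibits $\mathbb{R}p_{X/\mathscr{S}_K*}E$ as computed by the direct image of the overconvergent de Rham complex, which --- by the definition of rigid cohomology and of the functor $(-)^{\mathrm{an}}$ relating the rigid and Berkovich generic fibres --- is exactly $(\mathbb{R}p_{X/\mathscr{S},\mathrm{rig}*}E)^{\mathrm{an}}$. Finally, the general case follows by choosing a simplicial frame $(X_\bu,\overline{X}_\bu,\mathscr{P}_\bu)$ resolving $X$, applying the above level-wise, and invoking cohomological descent --- which holds on the overconvergent side by construction of the topos and on the rigid side by the descent results already cited.
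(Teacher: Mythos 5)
This statement is not proved in the paper at all: it is quoted verbatim as Theorem 3.6.7 of Le Stum's paper on the overconvergent site, and the author's only ``proof'' is the citation. So there is no internal argument to compare yours against; the relevant comparison is with Le Stum's own proof, whose architecture your sketch does reproduce correctly. The ingredients you name --- realization onto a tube attached to a smooth frame, the Grothendieck-style dictionary between crystals and modules with (overconvergent, integrable) connection via the projections $\mathscr{P}\times_{\mathscr{S}}\mathscr{P}\rightrightarrows\mathscr{P}$, the linearization of the de Rham complex and an overconvergent Poincar\'e lemma, and descent along a simplicial frame --- are exactly the ones the present paper later re-uses in \S\ref{overcon} (the augmentation $\mathcal{O}^\dagger_{U/K}\to\R L(i_U^{-1}\Omega^*)$, Le Stum's Propositions 3.3.9, 3.4.3 and 3.5.4, and the spectral sequence for open coverings).

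That said, as a proof your proposal has genuine gaps rather than omissions of routine detail. First, you explicitly grant the Poincar\'e lemma, but that is the entire content of part (2): one must show that the augmentation from $\mathcal{O}^\dagger_{X/\mathscr{S}_K}$ to the derived linearization of the de Rham complex is a quasi-isomorphism, and this rests on Berthelot's strong fibration theorem and a concrete analysis of strict neighbourhoods of the diagonal tubes --- the ``explicit contracting homotopy'' you gesture at is the hard analytic input, not a formality. Second, the passage from $\R\Gamma$ of the linearized complex on the site to $(\R p_{X/\mathscr{S},\mathrm{rig}*}E)^{\mathrm{an}}$ needs two further facts you do not address: the comparison between $i_X^{-1}\Omega^*_{\mathscr{P}_K}$ on the Berkovich tube and $j^\dagger\Omega^*$ on the rigid tube, and the exactness of $\pi_*$ on coherent $j^\dagger\mathcal{O}$-modules, which is what makes the functor $(-)^{\mathrm{an}}$ compatible with taking cohomology. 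Third, in part (1) the theorem is stated for \emph{finitely presented modules}, not for modules already assumed to be crystals; your reconstruction of the connection from ``the sheaf condition'' tacitly uses that the transition maps $u^\dagger E_{(Y,V)}\to E_{(Y',V')}$ are isomorphisms, and establishing this for finitely presented modules is itself a nontrivial step of Le Stum's argument. As an outline of the right strategy your proposal is sound; as a self-contained proof it defers precisely the points where the theorem could fail.
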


When $\mathscr{S}=\mathrm{Spf}(\mathcal{V})$, we will often write $\Gamma$ instead of $p_{X/K*}$, thus for a finitely presented $\mathcal{O}_{X/K}^\dagger$-module $E$, the above result becomes an isomorphism 
\begin{equation}
H^i_\mathrm{rig}(X/K,E)\cong \R^i\Gamma((X/O)_{\mathrm{An}^\dagger},E). 
\end{equation}  
However, this result is not quite enough for our purposes, we want to be able to take any smooth triple $(S,\overline{S},\mathscr{S})$, which is not accounted for in Le Stum's comparison theorem. However, this extension is straightforward.

\begin{theorem} \label{extendedcomp}Let $(S,\overline{S},\mathscr{S})$ be a smooth triple, with $\mathscr{S}$ good, and $p:X\rightarrow S$ a morphism of $k$-varieties. Let $E$ be an overconvergent isocrystal on $(X/\mathscr{S})$. Let $i_S:]S[_\mathscr{S}\rightarrow ]\overline{S}[_\mathscr{S}$ denote the (closed) inclusion. Then there is a quasi-isomorphism \begin{equation}
 i_S^{-1} ( \R p_{X/\mathscr{S},\mathrm{rig}*}E)^\mathrm{an} \cong (\R p_* E)_{(S,\mathscr{S}_K)}. 
\end{equation}
\end{theorem}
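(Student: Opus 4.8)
The strategy is to deduce Theorem~\ref{extendedcomp} from Le Stum's comparison Theorem~\ref{maincomp} by a localization argument on the base. The point is that Theorem~\ref{maincomp} handles the case where the base overconvergent variety is of the form $(\mathscr{S}_k, \mathscr{S}_K)$ for $\mathscr{S}$ a good formal scheme, whereas here we must allow an arbitrary smooth triple $(S, \overline{S}, \mathscr{S})$, i.e. we have an open $S \subset \overline{S} = \mathscr{S}_k$ rather than $S = \mathscr{S}_k$. The key observation is that the statement of Theorem~\ref{extendedcomp} is, by construction, a statement restricted to the tube $]S[_\mathscr{S}$ via the functor $i_S^{-1}$; and both sides only depend on the overconvergent variety $(S, \mathscr{S}_K)$ (equivalently, on a strict neighbourhood of $]S[$ in $\mathscr{S}_K$), not on how $S$ sits inside $\overline{S}$.

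\emph{Step 1.} First I would reduce to the affine, or at least to a local, situation on $\mathscr{S}$. Since the claimed isomorphism is between objects on the analytic space $]\overline{S}[_\mathscr{S}$ (before applying $i_S^{-1}$) or on $]S[_\mathscr{S}$ (after), and since both derived pushforwards $\R p_{X/\mathscr{S},\mathrm{rig}*}$ and $\R p_{X/\mathscr{S}_K*}$ commute with restriction to opens of the base in the appropriate sense (this is part of the formalism in \cite{oversite}), the question is local on $\mathscr{S}$. Hence I may assume $\mathscr{S}$ is affine, and moreover, shrinking if necessary, that $\mathscr{S}$ is smooth over $\mathrm{Spf}(\mathcal{V})$ (this uses the definition of a smooth frame: smoothness of $\mathscr{S} \to \mathrm{Spf}(\mathcal{V})$ in a neighbourhood of $S$, combined with the fact that we only care about what happens over $]S[$).

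\emph{Step 2.} Now, with $\mathscr{S}$ good (indeed affine) and smooth, I apply Le Stum's Theorem~\ref{maincomp}(2) to the $k$-variety $X$ over $\mathscr{S}_k = \overline{S}$: this gives $(\R p_{X/\mathscr{S},\mathrm{rig}*} E)^\mathrm{an} \cong \R p_{X/\mathscr{S}_K *} E$ as objects on $]\overline{S}[_\mathscr{S}^\mathrm{an}$. Strictly, Theorem~\ref{maincomp} is stated for a variety over $\mathscr{S}_k$ with no open/closed distinction, so $p: X \to S$ must be composed with $S \hookrightarrow \overline{S}$; but since $E$ is an overconvergent isocrystal on $X/\mathscr{S}$, its rigid cohomology sheaf is supported in the appropriate tube, and the formation of $X/O$ via ${j_{C,O}}_! {I_{C,O}}_* \underline{X}$ is insensitive to replacing $\overline{S}$ by $S$ once we restrict along $i_S$. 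The final step is therefore to apply $i_S^{-1}$ to both sides of the isomorphism from Theorem~\ref{maincomp}, and to identify $i_S^{-1}(\R p_{X/\mathscr{S}_K *} E)^\mathrm{an}$ — wait, rather $i_S^{-1}$ of the analytic pushforward side — with $(\R p_* E)_{(S, \mathscr{S}_K)}$, the relative overconvergent cohomology over the object $(S, \mathscr{S}_K)$ of $\mathrm{An}^\dagger(\mathcal{V})$. This identification is essentially the definition of $p_{X/O *}$ restricted to $]S[_\mathscr{S}$ together with the fact that $p_{X/\mathscr{S}_K *}$ and the functor $(-)_{(S, \mathscr{S}_K)}$ agree after the localization that defines $\mathrm{An}^\dagger$, i.e. a strict-neighbourhood argument.

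\emph{Main obstacle.} The bookkeeping in Step~2 is where the real content lies: one must check carefully that passing from the pair $(\mathscr{S}_k = \overline{S}, \mathscr{S}_K)$ to the restricted object $(S, \mathscr{S}_K)$, and correspondingly applying $i_S^{-1}$, is compatible with \emph{derived} pushforward — i.e. that $i_S^{-1}$, being an exact functor (pullback along an inclusion of analytic spaces, here a closed immersion of tubes), commutes with $\R p_*$ on the nose, and that the resulting object is precisely Le Stum's relative overconvergent cohomology over $(S, \mathscr{S}_K)$. Since $i_S$ is closed, $i_S^{-1}$ is exact and this should be formal, but one must be attentive to the distinction between the Berkovich space $]\overline{S}[_\mathscr{S}$, its open subspace a strict neighbourhood of $]S[$, and the closed tube $]S[_\mathscr{S}$ itself, as well as to the good/goodness hypotheses needed to invoke Theorem~\ref{maincomp} — which is exactly why the theorem is stated with $\mathscr{S}$ assumed good. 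I expect no genuinely hard analysis here, only a somewhat delicate unwinding of Le Stum's definitions; the phrase ``this extension is straightforward'' in the text is, I think, accurate.
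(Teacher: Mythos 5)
Your overall strategy --- treat Theorem~\ref{maincomp} as a black box and deduce the general case by restriction along $i_S$ --- is genuinely different from what the paper does. The paper does not localize and invoke Theorem~\ref{maincomp}; it proves the stronger, non-restricted statement $(\R p_{X/\mathscr{S},\mathrm{rig}*}E)^\mathrm{an}\cong i_{S*}(\R p_*E)_{(S,\mathscr{S}_K)}$ by rerunning Le Stum's proof of Theorem 3.6.7 with the base analytic space $\mathscr{S}_K$ replaced throughout by the tube $]\overline{S}[_{\mathscr{S}}$ and the identity $\R v_{K*}\circ i_{X*}=\R v_{K*}$ replaced by $\R v_{K*}\circ i_{X*}=i_{S*}\circ\R v_{K*}$. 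That formulation explains where the closed immersion $i_S$ actually enters: the analytification of the rigid pushforward is shown to be pushed forward from $]S[_{\mathscr{S}}$, rather than merely compared after applying $i_S^{-1}$.

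The gap in your version is that the step you defer as ``bookkeeping'' is precisely the mathematical content. Applying Theorem~\ref{maincomp} to $X\rightarrow\mathscr{S}_k$ produces $\R p_{X/\mathscr{S}_K*}E=(\R p_*E)_{(\mathscr{S}_k,\mathscr{S}_K)}$ on all of $\mathscr{S}_K$, and you then need $i_S^{-1}(\R p_*E)_{(\mathscr{S}_k,\mathscr{S}_K)}\simeq(\R p_*E)_{(S,\mathscr{S}_K)}$. This is exactly the statement that $\R p_*E$ is ``of Zariski type'' (Definition 3.6.1 of \cite{oversite}), and it is not formal: $]S[_{\mathscr{S}}$ is a \emph{closed} subset of $]\overline{S}[_{\mathscr{S}}$, so while $i_S^{-1}$ is exact, it does not tautologically commute with the derived pushforward computing the realizations --- one must match cofinal systems of neighbourhoods of $]S[_{\mathscr{S}}$ downstairs with neighbourhoods of $]X_S[$ upstairs. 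This is the content of the paper's later lemma that $\R f_*(\mathcal{O}^\dagger_{X/K})$ is of Zariski type, whose proof requires the explicit $T_\eta$-neighbourhood argument; calling it ``essentially the definition'' or ``a strict-neighbourhood argument'' is not a proof. (Your Step 1 also invokes smoothness of $\mathscr{S}$, which Theorem~\ref{maincomp} does not need and which plays no role in the argument.) The strategy could likely be completed, but as written the crucial base-change-along-$i_S$ step is asserted rather than established.
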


\begin{proof} It suffices to show that $( \R p_{X/\mathscr{S},\mathrm{rig}*}E)^\mathrm{an} \cong i_{S*}(\R p_* E)_{(S,\mathscr{S}_K)}$, and the proof of this is virtually word for word the same as in the proof of Theorem 3.6.7 of \cite{LS11}, taking care that in the proof of Proposition 3.5.8 of \emph{loc. cit.} one must replace the analytic space $\mathscr{S}_K$ by $]\overline{S}[_{\mathscr{S}}$ (and similarly in the rigid case) and the equality $\R v_{K*}\circ i_{X*}=\R v_{K*}$ by the equality $\R v_{K*}\circ i_{X*}=i_{S*}\circ \R v_{K*}$  \end{proof}

Let $\mathrm{dga}(\mathcal{O}_{X/K}^\dagger)$ denote the category of sheaves of $\mathcal{O}_{X/K}^\dagger$-dga's. If $f:X\rightarrow Y$ is a morphism of $k$-varieties, then since $f^{-1}(\mathcal{O}_{Y/K}^\dagger)=\mathcal{O}_{X/K}^\dagger$, the functor $f_*:\mathrm{dga}(\mathcal{O}_{X/K}^\dagger)\rightarrow \mathrm{dga}(\mathcal{O}_{Y/K}^\dagger)$ is right Quillen, and we can consider the derived functor 
\begin{equation}
\R f_*:\ho{\mathrm{dga}(\mathcal{O}_{X/K}^\dagger)}\rightarrow \ho{\mathrm{dga}(\mathcal{O}_{Y/K}^\dagger)} 
\end{equation}
as well as the absolute version 
\begin{equation}
\R \Gamma:\ho{\mathrm{dga}(\mathcal{O}_{X/K}^\dagger)}\rightarrow \ho{\mathrm{dga}_K}.
\end{equation}
The definition of the rational homotopy type of a $k$-variety is now straightforward. 

\begin{definition} The rational homotopy type of $X$ is $\R\Gamma(\mathcal{O}_{X/K}^\dagger)\in\ho{\mathrm{dga}_K}$. If $f:X\rightarrow Y$ is a morphism of $k$-varieties, then the relative rational homotopy type of $X$ over $Y$ is $\R f_*(\mathcal{O}_{X/K}^\dagger)\in\ho{\mathrm{dga}(\mathcal{O}_{Y/K}^\dagger)}$.
\end{definition}

It is easy to check that the rational homotopy type is functorial, and we get a map \begin{equation}
\mathrm{Mor}_{\mathrm{Sch}/k}(X,Y)\rightarrow [\R\Gamma(\mathcal{O}_{Y/K}^\dagger),\R\Gamma(\mathcal{O}_{X/K}^\dagger)]_{\ho{\mathrm{dga}_K}}.
\end{equation}
Similarly, for every morphism $f:X\rightarrow Y$ there is a map from the set of sections of $f$ to the set of sections (taking care with contravariance!) of the induced map $\R\Gamma(\mathcal{O}_{Y/K}^\dagger)\rightarrow \R\Gamma(\mathcal{O}_{X/K}^\dagger)$ in $\ho{\mathrm{dga}_K}$. There is also the obvious relative version of this.

\subsection{A comparison theorem}

In this section, we prove the following comparison result.

\begin{theorem} \label{comp}There is an isomorphism 
\begin{equation}
 \R\Gamma_\mathrm{Th}(\Omega^*(\mathcal{O}_{X/K}^\dagger))\cong \R\Gamma(\mathcal{O}_{X/K}^\dagger)
\end{equation}
in $\ho{\mathrm{dga}_K}$.
\end{theorem}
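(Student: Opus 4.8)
The plan is to reduce, level by level in a framing system, to Le Stum's comparison theorem in the extended form of Theorem~\ref{extendedcomp}, suitably enhanced to a statement about sheaves of dga's, and then to reassemble the pieces using the functor $\mathrm{Th}$ and cohomological descent. The structure is exactly the one suggested by the characteristic-zero picture sketched above, comparing $\R\Gamma_\mathrm{inf}(\mathcal{O}_{X/\C})$ with $\R\Gamma_\mathrm{Zar}(\Omega^*_X)$.

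First, fix a framing system $\mathfrak{U}_\bu=(U_\bu,\overline{U}_\bu,\mathscr{U}_\bu)$ for $X/K$, so that $U_\bu\to X$ is a Zariski (or \'{e}tale, or proper) hypercovering and each $(U_n,\overline{U}_n,\mathscr{U}_n)$ is a smooth and proper frame. The morphism $U_\bu\to X$ induces a morphism of ringed topoi $(U_\bu/K)_\an\to (X/K)_\an$ with $f^{-1}\mathcal{O}^\dagger_{X/K}=\mathcal{O}^\dagger_{U_\bu/K}$, hence an augmented cosimplicial object $\R\Gamma(\mathcal{O}^\dagger_{X/K})\to \R\Gamma(\mathcal{O}^\dagger_{U_\bu/K})$ and a map
\[
\R\Gamma(\mathcal{O}^\dagger_{X/K})\longrightarrow \mathrm{Th}(\R\Gamma(\mathcal{O}^\dagger_{U_\bu/K})).
\]
I claim this is an isomorphism in $\ho{\mathrm{dga}_K}$: forgetting the algebra structure and invoking Proposition~\ref{Thom}, it suffices to check that the induced map on cohomology is an isomorphism, and this — using Theorem~\ref{maincomp}(2) with $\mathscr{S}=\mathrm{Spf}(\mathcal{V})$ to identify $\R^i\Gamma(\mathcal{O}^\dagger_{U_n/K})$ with $H^i_\mathrm{rig}(U_n/K)$ compatibly in the simplicial variable — is precisely cohomological descent for rigid cohomology (Theorem 7.1.2 of \cite{cohdes2}, the Zariski and \'{e}tale cases being easier).

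Next, on each simplicial level $n$ I produce a natural isomorphism $\R\Gamma(\mathcal{O}^\dagger_{U_n/K})\cong \R\Gamma(j^\dagger\Omega^*_{]\overline{U}_n[_{\mathscr{U}_n0}})$ in $\ho{\mathrm{dga}_K}$. Taking $\mathscr{S}=\mathrm{Spf}(\mathcal{V})$ and $E=\mathcal{O}^\dagger$ (the constant overconvergent isocrystal) in Theorem~\ref{extendedcomp} gives such an identification on the level of underlying complexes. The point is that the comparison morphism of \emph{loc. cit.} is, before passing to cohomology, induced by morphisms of \emph{sheaves of dga's}: the overconvergent de Rham complex $j^\dagger\Omega^*$ is a multiplicative resolution of the constant sheaf $j^\dagger\mathcal{O}$, and both $\pi$ and $p_{U_n/\mathscr{S}}$ are morphisms of ringed topoi, so their derived pushforwards act on dga's and commute with passage to underlying complexes. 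Since a morphism of dga's is a quasi-isomorphism precisely when it is so on underlying complexes, and there it is Le Stum's isomorphism, we obtain the displayed isomorphism in $\ho{\mathrm{dga}_K}$; moreover it is natural in the frame $(U_n,\overline{U}_n,\mathscr{U}_n)$, so working with functorial (e.g. Godement) resolutions throughout, these assemble into an isomorphism of cosimplicial $K$-dga's $\R\Gamma(\mathcal{O}^\dagger_{U_\bu/K})\cong \R\Gamma(j^\dagger\Omega^*_{]\overline{U}_\bu[_{\mathscr{U}_\bu0}})$. Applying $\mathrm{Th}$ and combining with the first step yields
\[
\R\Gamma(\mathcal{O}^\dagger_{X/K})\cong \mathrm{Th}(\R\Gamma(\mathcal{O}^\dagger_{U_\bu/K}))\cong \mathrm{Th}(\R\Gamma(j^\dagger\Omega^*_{]\overline{U}_\bu[_{\mathscr{U}_\bu0}}))=\R\Gamma_\mathrm{Th}(\Omega^*(\mathcal{O}^\dagger_{X/K})),
\]
as required.

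The main obstacle is the dga-enhancement in the second step: one must verify that the zig-zag of comparison maps in the proof of Theorem~\ref{extendedcomp} (equivalently, of Theorem 3.6.7 of \cite{oversite}) can be chosen to consist of morphisms of sheaves of dga's, so that the comparison genuinely lives in $\ho{\mathrm{dga}_K}$ and not merely in $D^-(K)$, and that it is compatible with restriction along morphisms of frames so as to descend the simplicial direction. Everything else is formal, relying only on the facts that quasi-isomorphisms of dga's are detected on underlying complexes, that $\mathrm{Th}$ computes the correct homotopy limit (Proposition~\ref{Thom}), and that rigid cohomology satisfies cohomological descent.
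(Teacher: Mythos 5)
Your overall architecture coincides with the paper's: reduce by cohomological descent to the levels of a framing system, upgrade Le Stum's comparison to sheaves of dga's there, and reassemble with $\mathrm{Th}$. The difficulty is that the step you isolate as ``the main obstacle'' --- that the comparison underlying Theorem \ref{extendedcomp} can be realized by a zig-zag of morphisms of sheaves of dga's --- is precisely the content of the proof, and you leave it as a claim rather than establishing it. Moreover, the mechanism you offer for it is not correct as stated: $j^\dagger\Omega^*_{]\overline{U}_n[_{\mathscr{U}_n0}}$ is \emph{not} a resolution of the sheaf $j^\dagger\mathcal{O}_{]\overline{U}_n[_{\mathscr{U}_n0}}$ on the tube (if it were, rigid cohomology would coincide with coherent cohomology of $j^\dagger\mathcal{O}$), so the comparison cannot be obtained simply by pushing a multiplicative resolution forward along morphisms of ringed topoi.

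The actual multiplicative mechanism goes through Le Stum's linearization functor. One forms $\R L(i_{U_\bu}^{-1}\Omega^*_{(\mathscr{U}_\bu)_K}):=\R j_{\mathfrak{U}_\bu*}\varphi_{\mathfrak{U}_\bu}^*(i_{U_\bu}^{-1}\Omega^*_{(\mathscr{U}_\bu)_K})$ as an object of $\ho{\mathrm{dga}(\mathcal{O}^\dagger_{U_\bu/K})}$; the augmentation $\mathcal{O}^\dagger_{U_\bu/K}\rightarrow \R L(i_{U_\bu}^{-1}\Omega^*_{(\mathscr{U}_\bu)_K})$ is constructed as a morphism of sheaves of dga's by adjunction from the crystal isomorphism $j_{\mathfrak{U}_\bu}^{-1}\mathcal{O}_{U_\bu/K}^\dagger\cong\varphi_{\mathfrak{U}_\bu}^*(i_{U_\bu}^{-1}\mathcal{O}_{(\mathscr{U}_\bu)_K})$, and is a quasi-isomorphism by the overconvergent Poincar\'{e} lemma (Proposition 3.5.4 of \cite{oversite}), checked after forgetting the multiplication. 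One then needs the multiplicative projection formula $\R\Gamma(\R L(i_{U_\bu}^{-1}\Omega^*_{(\mathscr{U}_\bu)_K}))\cong \R\Gamma(i_{U_\bu}^{-1}\Omega^*_{(\mathscr{U}_\bu)_K})$ (Proposition 3.3.9 of \emph{loc.\ cit.}, transported to the simplicial dga setting), and finally the rigid-versus-Berkovich comparison $\R\Gamma(j^\dagger\Omega^*_{V_0(\mathfrak{U}_\bu)})\cong \R\Gamma(i_{U_\bu}^{-1}\Omega^*_{(\mathscr{U}_\bu)_K})$, which is multiplicative because it is induced level-wise by a morphism of ringed topoi together with the comparison of Proposition 3.4.3 of \emph{loc.\ cit.} If you supply these three steps your argument closes; as written, the decisive point is asserted rather than proved, and justified by an incorrect heuristic.
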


The idea is that after using simplicial methods to (essentially) reduce to the case where we may choose an embedding of $X$ into a smooth formal $\mathcal{V}$-scheme, we only really need to observe that Le Stum's comparison of rigid cohomology and cohomology of the overconvergent site respects multiplicative structures.

So suppose that $\mathfrak{U}_\bu=(U_\bu,\overline{U}_\bu,\mathscr{U}_\bu)$ is a framing system for $X$, with $U_\bu\rightarrow X$ a Zariski hyper-covering. Then, we define the category $\mathrm{dga}(\mathcal{O}_{U_\bu/K}^\dagger)$ of dga's on the simplicial ringed topos $(U_\bu/K)_{\mathrm{An}^\dagger}$ in the standard way. As before, we can consider the functor of Thom--Whitney global sections
\begin{equation}
\R\Gamma_\mathrm{Th}=\mathrm{Th}\circ\R\Gamma:\ho{\mathrm{dga}(\mathcal{O}_{U_\bu/K}^\dagger)}\rightarrow \ho{\mathrm{dga}_K}.
\end{equation}
There is also an obvious restriction functor $(-)|_{U_\bu}:\mathrm{dga}(\mathcal{O}_{X/K}^\dagger)\rightarrow \mathrm{dga}(\mathcal{O}_{U_\bu/K}^\dagger)$, thus giving us two functors 
\begin{equation}
\xymatrix{ \ho{\mathrm{dga}(\mathcal{O}_{X/K}^\dagger)} \ar@/^1.5pc/[rrr]^{\R\Gamma_\mathrm{Th}\circ (-)|_{U_\bu}} \ar@/_1.5pc/[rrr]_{\R\Gamma} & & & \ho{ \mathrm{dga}_K}.} 
\end{equation}
which we wish to compare. Note that there is an obvious natural transformation $\R\Gamma\Rightarrow \R\Gamma_\mathrm{Th}\circ (-)|_{U_\bu}$.

\begin{proposition} This natural transformation is an isomorphism when evaluated on $\mathcal{O}_{X/K}^\dagger$.
\end{proposition}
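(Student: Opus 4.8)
The plan is to reduce the statement to the comparison theorem for cohomology of the overconvergent site (Theorem \ref{maincomp}, resp. its extension Theorem \ref{extendedcomp}), using the now-familiar principle that an isomorphism in $\ho{\mathrm{dga}_K}$ can be checked after forgetting the algebra structure, where it becomes a statement about rigid cohomology. First I would unwind the natural transformation $\R\Gamma\Rightarrow \R\Gamma_\mathrm{Th}\circ(-)|_{U_\bu}$ evaluated on $\mathcal{O}_{X/K}^\dagger$. The restriction $\mathcal{O}_{X/K}^\dagger|_{U_\bu}$ is the constant crystal $\mathcal{O}_{U_\bu/K}^\dagger$ on the simplicial overconvergent topos, and cohomological descent on the overconvergent site (for the Zariski, étale, or proper hyper-covering $U_\bu\to X$) gives $\R\Gamma(\mathcal{O}_{X/K}^\dagger)\cong \mathrm{Th}(\R\Gamma(\mathcal{O}_{U_\bu/K}^\dagger))=\R\Gamma_\mathrm{Th}(\mathcal{O}_{U_\bu/K}^\dagger)$, so it suffices to identify $\R\Gamma_\mathrm{Th}((-)|_{U_\bu})$ applied to $\mathcal{O}_{X/K}^\dagger$ with the object computed via overconvergent de Rham complexes. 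Actually the cleanest route is: both functors, evaluated on $\mathcal{O}_{X/K}^\dagger$, are computed level-wise by $\R\Gamma$ on each $(U_n/K)_{\mathrm{An}^\dagger}$ followed by $\mathrm{Th}$, so the real content is the level-wise comparison.

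The key step is therefore the following level-wise claim: for each $n$, with the smooth and proper frame $(U_n,\overline{U}_n,\mathscr{U}_n)$, there is an isomorphism $\R\Gamma((U_n/K)_{\mathrm{An}^\dagger},\mathcal{O}_{U_n/K}^\dagger)\cong \R\Gamma(]U_n[_{\mathscr{U}_n0},j^\dagger\Omega^*_{]\overline{U}_n[_{\mathscr{U}_n0}})$ in $\ho{\mathrm{dga}_K}$, compatibly with the cosimplicial structure. To prove this one observes that Le Stum's identification of the cohomology of the constant crystal on the overconvergent site with rigid cohomology (Theorem \ref{maincomp}(2), or Theorem \ref{extendedcomp} for the relative-over-a-point version) is realized by an explicit morphism of sheaves of dga's, not merely of complexes: the overconvergent de Rham complex $j^\dagger\Omega^*_{]\overline{U}_n[_{\mathscr{U}_n0}}$ maps to (a model computing) $\R p_{U_n/K*}\mathcal{O}_{U_n/K}^\dagger$ as dga's, and this map is a quasi-isomorphism because after forgetting the algebra structure it is precisely Le Stum's comparison isomorphism. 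Concretely, I would take a cofibrant/fibrant model, check the map is defined as a map of sheaves of $K$-algebras, and invoke Theorem \ref{maincomp}(2) for the underlying complexes. The cosimplicial compatibility is automatic from the functoriality of all the constructions in the frame $(U_n,\overline{U}_n,\mathscr{U}_n)$.

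Applying $\mathrm{Th}$ to the resulting level-wise quasi-isomorphism of cosimplicial dga's gives the isomorphism $\R\Gamma_\mathrm{Th}(\Omega^*(\mathcal{O}_{X/K}^\dagger))\cong \R\Gamma_\mathrm{Th}((-)|_{U_\bu})(\mathcal{O}_{X/K}^\dagger)$, and composing with the descent isomorphism $\R\Gamma(\mathcal{O}_{X/K}^\dagger)\cong \R\Gamma_\mathrm{Th}((-)|_{U_\bu})(\mathcal{O}_{X/K}^\dagger)$ noted above (which is itself the natural transformation of the proposition) completes the argument. The main obstacle is the second step: making precise that Le Stum's comparison is multiplicative, i.e. that one can choose compatible dga models on both sides so that the quasi-isomorphism of complexes underlying Theorem \ref{maincomp}(2) lifts to $\ho{\mathrm{dga}_K}$. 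Everything else—cohomological descent on the overconvergent site, the behaviour of $\mathrm{Th}$ on level-wise quasi-isomorphisms (Proposition \ref{Thom}), and the check that $\mathcal{O}_{X/K}^\dagger|_{U_\bu}=\mathcal{O}_{U_\bu/K}^\dagger$—is formal or already in the literature. In fact, since forgetting the algebra structure reduces the map in question to rigid cohomology on both sides, and we already know (from the lemma in \S\ref{tools} and its proof) that both $\R\Gamma_\mathrm{Th}(\Omega^*(\mathcal{O}_{X/K}^\dagger))$ and $\R\Gamma(\mathcal{O}_{X/K}^\dagger)$ compute $\R\Gamma_\mathrm{rig}(X/K)$ after forgetting multiplicative structure, it is enough to check that the specific natural transformation of the proposition induces this known isomorphism on underlying complexes—which it does, being built from Le Stum's comparison map and cohomological descent, both of which are the standard maps realizing the identification with rigid cohomology.
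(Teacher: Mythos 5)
Your proposal does contain the paper's argument, but it is buried inside a much longer discussion aimed at a different statement. The proposition here is purely internal to the overconvergent site: both functors $\R\Gamma$ and $\R\Gamma_\mathrm{Th}\circ(-)|_{U_\bu}$ are built from the overconvergent topos alone, and no de Rham complexes appear on either side of the natural transformation. The entire content is the descent statement you record at the start of your second paragraph: after forgetting the algebra structure (legitimate, since weak equivalences of dga's are detected on the underlying complexes), the map is an isomorphism on cohomology by cohomological descent for overconvergent cohomology (\S3.6 of Le Stum). That is the paper's whole proof, in two sentences.

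Everything after that in your proposal --- the level-wise identification of $\R\Gamma((U_n/K)_{\mathrm{An}^\dagger},\mathcal{O}^\dagger_{U_n/K})$ with $\R\Gamma(j^\dagger\Omega^*_{]\overline{U}_n[_{\mathscr{U}_n0}})$, and the worry about the multiplicativity of Le Stum's comparison, which you single out as the ``main obstacle'' --- is the content of the \emph{subsequent} steps in the proof of the comparison theorem (the linearization $\R L(i_{U_\bu}^{-1}\Omega^*_{(\mathscr{U}_\bu)_K})$ and the propositions following it), not of this proposition. Moreover, the sentence claiming that both functors ``are computed level-wise by $\R\Gamma$ on each $(U_n/K)_{\mathrm{An}^\dagger}$ followed by $\mathrm{Th}$'' begs the question: that $\R\Gamma(\mathcal{O}^\dagger_{X/K})$ is so computed is precisely what is being proved. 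Your fallback route in the final paragraph (both sides compute $\R\Gamma_\mathrm{rig}(X/K)$ after forgetting the algebra structure, so one checks the transformation realizes the known identification) would also work, but it detours the descent statement for the overconvergent site through the comparison with rigid cohomology instead of invoking Le Stum's descent result directly, and it requires the extra verification that the specific map in question is the standard one.
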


\begin{proof} As usual, it suffices to show that it induces an isomorphism on cohomology. But this just follows from cohomological descent for overconvergent cohomology, see Section 3.6 of \cite{LS11}.
\end{proof}

We now want to extend Le Stum's overconvergent version of `linearization of differential operators' to deal both with dga's and with simplicial Berkovich spaces. To start with, consider the following diagram of simplicial ringed topoi 
\begin{equation}
\xymatrix{ (U_\bu,]U_\bu[_{\mathscr{U}_\bu})_{\mathrm{An}^\dagger} \ar[r]^-{\varphi_{\mathfrak{U}_\bu}}\ar[d]_{j_{\mathfrak{U}_\bu}} & ]U_\bu[_{\mathscr{U}_\bu} \\  (U_\bu/K)_{\mathrm{An}^\dagger} } 
\end{equation} 
where:

\begin{itemize}
\item $(U_\bu/K)_{\an}$ is as described above, $]U_\bu[_{\mathscr{U}_\bu}$ is the tube associated to $(U_\bu,\mathscr{U}_\bu)$ and $(U_\bu,]U_\bu[_{\mathscr{U}_\bu})_{\an}$ is the simplicial topos of sheaves on $\mathrm{An}^\dagger(\mathcal{V})$ over the representable simplicial sheaf associated to $(U_\bu,]U_\bu[_{\mathscr{U}_\bu})$;
\item $j_{\mathfrak{U}_\bu}$ arises from the natural morphism $(U_\bu,]U_\bu[_{\mathscr{U}_\bu})\rightarrow (U_\bu/K)$ of simplicial sheaves, and $\varphi_{\mathfrak{U}_\bu}$ is the `realization map'. For more details, see \S\S1.4 and 2.1 of \emph{loc. cit.}
\end{itemize}

The induced maps $j_{\mathfrak{U}_\bu}^{-1}(\mathcal{O}^\dagger_{U_\bu/K})\rightarrow \mathcal{O}^\dagger_{U_\bu/]U_\bu[_{\mathscr{U}_\bu}}$ and $\varphi_{\mathfrak{U}_\bu}^{-1}(K) \rightarrow \mathcal{O}^\dagger_{U_\bu/]U_\bu[_{\mathscr{U}_\bu}}$ are both flat, and we can thus consider the derived linearization of the overconvergent de Rham dga 
\begin{equation}
\R L(i_{U_\bu}^{-1}\Omega^*_{(\mathscr{U}_\bu)_K}):=\R j_{\mathfrak{U}_\bu*} \varphi_{\mathfrak{U}_\bu}^*(i_{U_\bu}^{-1}\Omega^*_{(\mathscr{U}_\bu)_K}) 
\end{equation} 
as in Chapter 3 of \emph{loc. cit.}. This is an object of $\ho{\mathrm{dga}(\mathcal{O}_{U_\bu/K}^\dagger)}$. 

\begin{proposition} There is an isomorphism
\begin{equation}
\mathcal{O}_{U_\bu/K}^\dagger\rightarrow  \R L(i_{U_\bu}^{-1}\Omega^*_{(\mathscr{U}_\bu)_K})
\end{equation}
in $\ho{\mathrm{dga}(\mathcal{O}_{U_\bu/K}^\dagger)}$.
\end{proposition}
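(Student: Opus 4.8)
The plan is to deduce the statement from Le Stum's overconvergent Poincar\'e lemma (the fact, proved in Chapter 3 of \cite{oversite}, that the linearization of the de Rham complex of a smooth frame resolves the constant overconvergent sheaf), once the two pieces of extra structure here---the simplicial index and the multiplicative structure---have been stripped away. Both strippings are formal, so there is essentially nothing beyond \emph{loc. cit.} to do.

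First I would reduce to a level-wise statement. An arrow in $\ho{\mathrm{dga}(\mathcal{O}^\dagger_{U_\bu/K})}$ is an isomorphism precisely when it is a level-wise weak equivalence, so it suffices to show that for each $n$ the natural map $\mathcal{O}^\dagger_{U_n/K}\rightarrow \R L(i_{U_n}^{-1}\Omega^*_{(\mathscr{U}_n)_K})$ is an isomorphism in $\ho{\mathrm{dga}(\mathcal{O}^\dagger_{U_n/K})}$, where the linearization on level $n$ is formed exactly as above using the maps $j_{\mathfrak{U}_n}$ and $\varphi_{\mathfrak{U}_n}$ attached to the single frame $(U_n,\overline{U}_n,\mathscr{U}_n)$. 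Here I use the hypothesis in the definition of a framing system that each $(U_n,\overline{U}_n,\mathscr{U}_n)$ is a smooth and proper frame, so $\mathscr{U}_n\rightarrow \mathrm{Spf}(\mathcal{V})$ is smooth in a neighbourhood of $U_n$; this is exactly the geometric setting covered by Le Stum.

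Next I would forget the algebra structure. By the definition of the model structure on $\mathrm{dga}(\mathcal{T};\mathcal{O})$ recalled in \S\ref{tools}, a morphism of $\mathcal{O}$-dga's is a weak equivalence if and only if it is one on underlying complexes, and (thanks to the flatness of $j_{\mathfrak{U}_n}^{-1}\mathcal{O}^\dagger_{U_n/K}\rightarrow \mathcal{O}^\dagger_{U_n/]U_n[_{\mathscr{U}_n}}$ and of $\varphi_{\mathfrak{U}_n}^{-1}(K)\rightarrow \mathcal{O}^\dagger_{U_n/]U_n[_{\mathscr{U}_n}}$ noted just before the proposition) the derived functors $\R j_{\mathfrak{U}_n *}$ and $\varphi_{\mathfrak{U}_n}^*$ commute with passing to underlying complexes. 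Hence it is enough to prove that $\mathcal{O}^\dagger_{U_n/K}\rightarrow \R L(i_{U_n}^{-1}\Omega^*_{(\mathscr{U}_n)_K})$ is a quasi-isomorphism of complexes of $\mathcal{O}^\dagger_{U_n/K}$-modules, and this is precisely Le Stum's result: the derived linearization of the overconvergent de Rham complex of the smooth frame $(U_n,\overline{U}_n,\mathscr{U}_n)$ is a resolution of $\mathcal{O}^\dagger_{U_n/K}$.

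The remaining point---which I expect to be the only genuinely delicate bookkeeping, though it introduces no new idea---is to check that the comparison arrow is a morphism of dga's, and not just of complexes, so that the above really does exhibit an isomorphism in $\ho{\mathrm{dga}(\mathcal{O}^\dagger_{U_\bu/K})}$. This follows once one observes that $i_{U_\bu}^{-1}\Omega^*_{(\mathscr{U}_\bu)_K}$ is a sheaf of dga's via the wedge product, that the realization pullback $\varphi^*_{\mathfrak{U}_\bu}$ is (strong) monoidal, and that $\R j_{\mathfrak{U}_\bu *}$ of a dga is again a dga; the map in question is then the one induced by the unit $\mathcal{O}^\dagger_{U_\bu/K}\rightarrow \Omega^0\hookrightarrow \Omega^*$. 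Assembling these level-wise statements back into a statement about the simplicial ringed topos $(U_\bu,]U_\bu[_{\mathscr{U}_\bu})_{\an}$ is then automatic from the first reduction.
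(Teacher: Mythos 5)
Your proposal is correct and follows essentially the same route as the paper: define the comparison arrow multiplicatively via its degree-zero component, then forget the algebra structure and check level-wise that it is a quasi-isomorphism by invoking Le Stum's overconvergent Poincar\'e lemma (Proposition 3.5.4 of \cite{oversite}). The only point where the paper is more explicit is in the construction of the map itself: since $\mathcal{O}^\dagger_{U_\bu/K}$ and $i_{U_\bu}^{-1}\Omega^0_{(\mathscr{U}_\bu)_K}$ live on different sites, one uses the crystal property to identify $j_{\mathfrak{U}_\bu}^{-1}\mathcal{O}^\dagger_{U_\bu/K}$ with $\varphi_{\mathfrak{U}_\bu}^*(i_{U_\bu}^{-1}\mathcal{O}_{(\mathscr{U}_\bu)_K})$ and then applies the $(j_{\mathfrak{U}_\bu}^{-1},j_{\mathfrak{U}_\bu*})$ adjunction, rather than a literal unit map.
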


\begin{proof} To define the morphism, it suffices to define a morphism  
\begin{equation}
 \mathcal{O}_{U_\bu/K}^\dagger\rightarrow j_{\mathfrak{U}_\bu*}\varphi_{\mathfrak{U}_\bu}^*(i_{U_\bu}^{-1}\Omega^*_{(\mathscr{U}_\bu)_K})
\end{equation}
in $\mathrm{dga}(\mathcal{O}_{U_\bu/K}^\dagger)$, or equivalently a map 
$ \mathcal{O}_{U_n/K}^\dagger \rightarrow j_{\mathfrak{U}_n*}\varphi_{\mathfrak{U}_n}^*(i_{U_n}^{-1}\mathcal{O}_{(\mathscr{U}_n)_K})$
of $K$-algebras, functorially in $n$, such that the composite map 
$\mathcal{O}_{U_n/K}^\dagger \rightarrow j_{\mathfrak{U}_n*}\varphi_{\mathfrak{U}_n}^*(i_{U_n}^{-1}\Omega^1_{(\mathscr{U}_n)_K})
$
is zero. But exactly as in Proposition 3.3.10 of \cite{LS11}, since $\mathcal{O}^\dagger_{U_n/K}$ is a crystal, we have $j_{\mathfrak{U}_\bu}^{-1}\mathcal{O}_{U_\bu/K}^\dagger\cong\varphi_{\mathfrak{U}_\bu}^*(i_{U_\bu}^{-1}\Omega^*_{(\mathscr{U}_\bu)_K})$, and hence this map arises via the adjunction between $j_{\mathfrak{U}_\bu*}$ and $j_{\mathfrak{U}_\bu}^{-1}$. To prove that the induced map $\mathcal{O}_{U_\bu/K}^\dagger\rightarrow  \R L(i_{U_\bu}^{-1}\Omega^*_{(\mathscr{U}_\bu)_K})$ is a quasi-isomorphism, we can forget the algebra structure, and prove that it is a quasi-isomorphism of complexes in each simplicial degree. But by definition, the map $\mathcal{O}_{U_n/K}^\dagger\rightarrow \R L(i_{U_\bu}^{-1}\Omega^*_{(\mathscr{U}_\bu)_K})_n$ is exactly the augmentation map that Le Stum constructs. That this is a quasi-isomorphism is then Proposition 3.5.4. of \emph{loc. cit}.
\end{proof}

\begin{proposition} There is an isomorphism 
\begin{equation}
\R\Gamma(\R L(i_{U_\bu}^{-1}\Omega^*_{(\mathscr{U}_\bu)_K}))\cong \R\Gamma(i_{U_\bu}^{-1}\Omega^*_{(\mathscr{U}_\bu)_K}))
\end{equation} 
in $\ho{\mathrm{dga}_K^\Delta}$.
\end{proposition}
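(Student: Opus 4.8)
The plan is to unwind the definition $\R L(-)=\R j_{\mathfrak{U}_\bu*}\circ\varphi_{\mathfrak{U}_\bu}^*$ and then, using the derived-functor formalism of Section~\ref{tools}, to reduce the statement to a comparison that holds in each simplicial degree and is essentially one of Le Stum's results. Since $\varphi_{\mathfrak{U}_\bu}^{-1}(K)\to\mathcal{O}^\dagger_{U_\bu/]U_\bu[_{\mathscr{U}_\bu}}$ is flat, the functor $\varphi_{\mathfrak{U}_\bu}^*$ is exact and needs no derived correction; and since both $j_{\mathfrak{U}_\bu*}$ and the global-sections functor on $(U_\bu/K)_{\an}$ are right Quillen on the relevant categories of sheaves of dga's, their composite is right Quillen and computes $\R\Gamma$ on the simplicial ringed topos $(U_\bu,]U_\bu[_{\mathscr{U}_\bu})_{\an}$. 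Hence the left-hand side is identified with
\begin{equation}
\R\Gamma\bigl((U_\bu,]U_\bu[_{\mathscr{U}_\bu})_{\an},\,\varphi_{\mathfrak{U}_\bu}^*(i_{U_\bu}^{-1}\Omega^*_{(\mathscr{U}_\bu)_K})\bigr)\in\ho{\mathrm{dga}_K^\Delta},
\end{equation}
and the problem becomes comparing this with $\R\Gamma(]U_\bu[_{\mathscr{U}_\bu},i_{U_\bu}^{-1}\Omega^*_{(\mathscr{U}_\bu)_K})$.

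Next, because the model structures on the categories of sheaves of dga's are set up so that weak equivalences and $\R\Gamma$ are detected on underlying complexes, it suffices to prove the isomorphism after forgetting algebra structures; and since we work in $\ho{\mathrm{dga}_K^\Delta}$ with level-wise quasi-isomorphisms inverted, it suffices to prove it in each simplicial degree $n$. We are thus reduced to showing
\begin{equation}
\R\Gamma\bigl((U_n,]U_n[_{\mathscr{U}_n})_{\an},\,\varphi_{\mathfrak{U}_n}^*(i_{U_n}^{-1}\Omega^*_{(\mathscr{U}_n)_K})\bigr)\;\cong\;\R\Gamma\bigl(]U_n[_{\mathscr{U}_n},\,i_{U_n}^{-1}\Omega^*_{(\mathscr{U}_n)_K}\bigr)
\end{equation}
in the derived category of $K$-vector spaces, for the single overconvergent variety $(U_n,]U_n[_{\mathscr{U}_n})$. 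This is exactly (a complex-valued version of) Le Stum's comparison that the realization morphism $\varphi_{X,V}\colon(X,V)_{\an}\to{]X[_V}$ has the property that the unit $\mathrm{id}\to\R\varphi_{X,V*}\circ\varphi_{X,V}^*$ is an isomorphism on coherent-type sheaves on the tube; see Section~3.5 of \cite{oversite}. Applying $\R\Gamma$ on ${]U_n[_{\mathscr{U}_n}}$ then gives the claim. Since $i_{U_n}^{-1}\Omega^*_{(\mathscr{U}_n)_K}$ is a bounded complex of coherent $i_{U_n}^{-1}\mathcal{O}_{(\mathscr{U}_n)_K}$-modules, one either invokes the complex-level form of the statement directly, or applies it term by term and concludes via the hypercohomology spectral sequence.

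The main obstacle I anticipate is bookkeeping rather than genuine difficulty: one must check that $j_{\mathfrak{U}_\bu}$ and the realization map $\varphi_{\mathfrak{U}_\bu}$ behave well with respect to passage to simplicial topoi, so that the composition of derived functors and the reduction to individual simplicial degrees are legitimate, and that Le Stum's realization comparison --- stated in \emph{loc. cit.} for the structure sheaf / coherent sheaves on a single tube --- applies verbatim to the overconvergent de Rham complex considered here (this is where the flatness hypotheses recorded just before the statement are used, exactly as in the construction of $\R L$). Granting these compatibilities, the proof is a formal consequence of Le Stum's results together with the Quillen-adjunction machinery already in place.
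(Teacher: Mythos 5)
Your argument is correct and is essentially the paper's proof made explicit: the paper simply remarks that Le Stum's Proposition 3.3.9 of \cite{oversite} (which is proved exactly by the unwinding $\R L=\R j_*\varphi^*$, composing derived functors, and using that the unit $\mathrm{id}\to\R\varphi_*\varphi^*$ for the realization morphism is an isomorphism) carries over \emph{mutatis mutandis} to the simplicial/dga setting, and your reduction to underlying complexes in each simplicial degree is precisely that transfer. The only cosmetic point is the citation: the statement you need is Proposition 3.3.9 (resting on the results of \S2.3 of \emph{loc.\ cit.}) rather than \S3.5.
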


\begin{proof} Just note that the proof of Proposition 3.3.9 of \emph{loc. cit.} carries over \emph{mutatis mutandis} to the simplicial/dga situation.
\end{proof}

Combining these two results, we see that in order to complete the proof of Theorem \ref{comp}, we just need to verify that there is a canonical isomorphism 
\begin{equation}
 \R\Gamma(j^\dagger\Omega^*_{V_0(\mathfrak{U})_\bu})\cong \R\Gamma(i_{U_\bu}^{-1}\Omega^*_{(\mathscr{U}_\bu)_K})
\end{equation}
in $\ho{\mathrm{dga}_K^\Delta}$. We may work level-wise, where there is a natural map 
\begin{equation}
\R\Gamma(j^\dagger\Omega^*_{V_0(\mathfrak{U})_n})\rightarrow \R\Gamma(i_{U_n}^{-1}\Omega^*_{(\mathscr{U}_n)_K})
\end{equation}
which comes from the map of topoi $V_0(\mathfrak{U})_n\rightarrow (\mathscr{U}_n)_K$ and the comparison between $j^\dagger\Omega^*_{]\overline{U}_n[_{\mathscr{U}_n0}}$ and $i_{U_n}^{-1}\Omega^*_{(\mathscr{U}_n)_K}$, as in Proposition 3.4.3 of \cite{LS11}. To show that it is an isomorphism is we may forget the algebra structure, and invoke Le Stum's results from \S3 of \emph{loc. cit.}

\subsection{Functoriality and Frobenius structures}

We are now in a position to prove that the rigid rational homotopy type is functorial. Indeed, it is clear that the definition in terms of the overconvergent site is functorial, and it is also not too difficult to see by functoriality of the comparison morphism that the map $f^*:\mathbb{R}\Gamma(\mathcal{O}^\dagger_{Y/K})\rightarrow \mathbb{R}\Gamma(\mathcal{O}^\dagger_{X/K})$ induced by any morphism $f:X\rightarrow Y$ is the same as that induced by any lift of $f$ to a map $\mathfrak{f}$ between framing systems for $X$ and $Y$. In particular, this latter map is independent of the lift $\mathfrak{f}$.

In order to put Frobenius structures on the dga's obtained from the overconvergent site, we need to examine slightly more closely Le Stum's base change morphism 1.4.6 of \cite{LS11}. So suppose that $\alpha:K\rightarrow K'$ is a finite extension of complete, discretely valued fields, and let $\mathcal{V}\rightarrow \mathcal{V}'$ (respectively, $k\rightarrow k'$) be the induced finite extension of rings of integers (respectively, residue fields). Then, there is a morphism of sites 
\begin{equation}
 \alpha: \mathrm{An}^\dagger(\mathcal{V}')\rightarrow \mathrm{An}^\dagger(\mathcal{V}) 
\end{equation}
which is induced by $(X\subset \mathscr{P}\leftarrow V)\mapsto (X_{k'}\subset \mathscr{P}_{\mathcal{V}'}\leftarrow V_{K'})$. Important for us will be the fact that this base extension functor has an adjoint, which considers an overconvergent variety $(Y,W)$ over $K'$ as one over $K$ - note that this holds only if the extension $K\rightarrow K'$ is finite. Hence the pull-back morphism $\alpha^{-1}$ on presheaves has a simple description - namely $(\alpha^{-1}\mathcal{F})(Y,W)=\mathcal{F}(Y,W)$ where on the LHS we are considering $(Y,W)$ as an overconvergent variety over $K'$, and on the RHS as one over $K$. In particular, we have $\alpha^{-1}(\mathcal{O}_{\mathcal{V}}^\dagger)=\mathcal{O}^\dagger_{\mathcal{V}'}$, and $\alpha$ extends to a morphism of ringed sites.

Now suppose that $X$ is a $k$-variety, so that we have the sheaf $(X/K)$ on $\mathrm{An}^\dagger(\mathcal{V})$, which is the sheafification of the presheaf $(C,O)\mapsto \mathrm{Mor}_k(C,X)$. By the above comments, $\alpha^{-1}(X/K)$ is the sheafification of the presheaf $(C',O')\mapsto \mathrm{Mor}_k(C',X)=\mathrm{Mor}_{k'}(C',X_{k'})$. Thus, we see that $\alpha^{-1}(X/K)=(X_{k'}/K')$, and hence we get a morphism of ringed topoi 
\begin{equation}
(X_{k'}/K')_{\mathrm{An}^\dagger}\rightarrow (X/K)_{\mathrm{An}^\dagger}.
\end{equation}
More generally, exploiting functoriality of $(Y/K')_{\mathrm{An}^\dagger}$ in $Y$ as a $k'$-variety, we see that for any $k'$ variety $Y$ and any commutative square 
\begin{equation}
 \xymatrix{ Y\ar[r]^f \ar[d] & X\ar[d] \\ \spec{k'}\ar[r] & \spec{k} }
\end{equation}
there is an induced morphism of ringed topoi 
\begin{equation}
f: (Y/K')_{\mathrm{An}^\dagger} \rightarrow (X/K)_{\mathrm{An}^\dagger}
\end{equation}
such that that $f^{-1}(\mathcal{O}_{X/K}^\dagger)=\mathcal{O}_{Y/K'}^\dagger$. The situation we are interested in is when $\sigma:K\rightarrow K$ is a lifting of the absolute Frobenius on $k$, and $F_X:X\rightarrow X$ is the absolute Frobenius on $X$. We then get a morphism 
\begin{equation}
F_X:(X/K)_{\mathrm{An}^\dagger}\rightarrow (X/K)_{\mathrm{An}^\dagger} 
\end{equation}
of ringed topoi, and if $f: X\rightarrow Y$ is a morphism of $k$-varieties, then there is a commutative square 
\begin{equation}
 \xymatrix{ (X/K)_{\an}\ar[r]^{F_X}\ar[d]_f & (X/K)_{\an}\ar[d]^f \\ (Y/K)_{\an}\ar[r]^{F_Y}&(Y/K)_{\an}. }
\end{equation}
Hence, we get a base change map 
\begin{equation}
\Phi_{X/Y}:F_Y^{-1}\R f_*(\mathcal{O}_{X/K}^\dagger) \rightarrow \R f_*(\mathcal{O}_{X/K}^\dagger)
\end{equation}
in $\ho{\mathrm{dga}(\mathcal{O}_{Y/K}^\dagger)}$.

\begin{proposition} If $Y=\spec{k}$ is a point, then $\Phi_{X/k}$ is a quasi-isomorphism.
\end{proposition}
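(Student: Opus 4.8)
The plan is to reduce the statement to cohomology and then to the known fact that Frobenius acts invertibly on rigid cohomology. Since a morphism in $\ho{\mathrm{dga}_K}$ is an isomorphism precisely when it is one after applying the forgetful functor to $\ho{C^{\geq0}_K}\subset D^-(K)$, I would first forget the algebra structure on $\R\Gamma(\mathcal{O}_{X/K}^\dagger)$. When $Y=\spec{k}$ the functor $F_k^{-1}$ is just $(-)\otimes_{K,\sigma}K$, so $\Phi_{X/k}$ becomes a map $\R\Gamma(\mathcal{O}_{X/K}^\dagger)\otimes_{K,\sigma}K\to\R\Gamma(\mathcal{O}_{X/K}^\dagger)$. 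Using Le Stum's comparison theorem (Theorem \ref{maincomp} with $\mathscr{S}=\mathrm{Spf}(\mathcal{V})$), the cohomology of $\R\Gamma(\mathcal{O}_{X/K}^\dagger)$ is canonically $H^*_\mathrm{rig}(X/K)$, and unwinding the construction of $\Phi_{X/k}$ out of Le Stum's base change morphism (1.4.6 of \cite{oversite}) together with the square relating $F_X$ on $(X/K)_{\an}$ to the absolute Frobenius of $X$, one checks that on cohomology $\Phi_{X/k}$ is precisely the linearization of the Frobenius endomorphism of $H^*_\mathrm{rig}(X/K)$.

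It then remains to see that this Frobenius is bijective for an arbitrary $k$-variety $X$. For $X$ smooth and proper this is classical: $H^*_\mathrm{rig}(X/K)$ is crystalline cohomology tensored with $K$, on which Frobenius is an isogeny and hence an isomorphism after inverting $p$. For general $X$ one may either cite the corresponding statement in the literature, or deduce it from the smooth and proper case exactly as in the proof of Theorem \ref{mixedtheo}: by de Jong's theorem there is a proper hypercovering $Y_\bu\to X$ with each $Y_n$ admitting a good compactification, and by cohomological descent for rigid cohomology (as used in Propositions \ref{descenthom} and \ref{complog}) $H^*_\mathrm{rig}(X/K)$ is computed by a Frobenius-equivariant spectral sequence whose $E_1$-terms are (log-)rigid cohomology groups of smooth and proper objects; Frobenius being bijective on each of these, it is bijective on $H^*_\mathrm{rig}(X/K)$.

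Alternatively, and more in the spirit of the comparison just proved, one can observe that the framing systems produced in Proposition \ref{frame1} are built from products of formal projective spaces, which carry the obvious coordinatewise lift of Frobenius, so they are $F$-framings in the sense of \S2.4; the associated Frobenius map $\phi$ on $\R\Gamma_\mathrm{Th}(\Omega^*(\mathcal{O}^\dagger_{X/K}))$ is by construction a quasi-isomorphism, and one identifies it with $\Phi_{X/k}$ under the comparison of Theorem \ref{comp}. In either approach the only real work is bookkeeping --- matching the abstractly defined base-change morphism $\Phi_{X/k}$ with a Frobenius that is visibly an isomorphism --- rather than any new analytic input, since the substantive content has already been absorbed into Theorems \ref{maincomp} and \ref{comp}; this compatibility check is the step I expect to require the most care.
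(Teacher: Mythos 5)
Your proposal follows essentially the same route as the paper: reduce to cohomology, use the comparison theorem to identify the induced map with the usual Frobenius on $H^*_\mathrm{rig}(X/K)$, and conclude from the bijectivity of that Frobenius. The only difference is that the paper simply asserts this bijectivity as known, whereas you supply a justification (descent from the smooth proper case via de Jong's alterations), which is a reasonable and correct way to fill in that standard fact.
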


\begin{proof} This is a straightforward application of the comparison theorem. It is not too difficult to check that this is compatible base change, and hence the morphism induced by $\Phi_{X/k}$ on cohomology is the usual Frobenius on rigid cohomology, which is an isomorphism.
\end{proof}

\begin{remark} Similarly to the problem of functoriality, we can see that the map $\Phi_{X/k}$ is the same as the map induced by a lift of the absolute Frobenius to a framing system for $X$. Again, this implies that the latter is independent of the choice of this lift.
\end{remark}

\begin{remark} We do not know whether or not $\Phi_{X/Y}$ is a quasi-isomorphism in general. It would follow, for example, if we knew Frobenius to be bijective on relative overconvergent cohomology.
\end{remark}

\section{Relative crystalline homotopy types}

In this section, we define relative rational homotopy types, and again there will be two approaches, one via rigid cohomology and cohomological descent and one via the overconvergent site of Le Stum.

In rigid cohomology, the relative theory is expressed with respect to a base frame. We will also systematically work with pairs of varieties over $k$, that is, we will work in the category consisting of open immersions $S\rightarrow\overline{S}$ of $k$-varieties, and where morphisms are commutative diagrams. The reason we do this is to more easily apply the results of \cite{CT03} on cohomological descent.

Fix a base frame $\mathfrak{S}=(S,\overline{S},\mathscr{S})$, which we assume to be smooth and proper over $\mathcal{V}$. Although most of what we say will work in greater generality, we will be mainly interested in the case where $S$ is a smooth, geometrically connected curve over $k$, $\overline{S}$ is its unique compactification, and $\mathscr{S}$ is a lifting of $\overline{S}$ to a smooth formal curve over $\mathcal{V}$.

\begin{definition} We say that a frame $\mathfrak{U}=(U, \overline{U},\mathscr{U})$ over $\mathfrak{S}$ is smooth if $\mathscr{U}\rightarrow \mathscr{S}$ is smooth in a neighbourhood of $U$, and proper if $\mathscr{U}\rightarrow \mathscr{S}$ is.
\end{definition}

\begin{definition} Let $(X,\overline{X})$ be a pair of varieties over $k$, that is, an open immersion of separated $k$-schemes of finite type. Let $f:(X,\overline{X})\rightarrow (S,\overline{S})$ be a morphism of pairs. Then, an $(X,\overline{X})$-frame over $\mathfrak{S}$ is a frame $\mathfrak{Y}=(Y,\overline{Y},\mathscr{Y})$ over $\mathfrak{S}$ together with a morphism $(Y,\overline{Y})\rightarrow (X,\overline{X})$ such that the diagram
\begin{equation}
\xymatrix{  (Y,\overline{Y})\ar[r]\ar[dr] & (X,\overline{X})\ar[d] \\ & (S,\overline{S}) }
\end{equation}  
commutes.
\end{definition}

\begin{definition} Let $f:(X,\overline{X})\rightarrow (S,\overline{S})$ be as above. Then, we define a framing system for $f$ to be simplicial $(X,\overline{X})$-frame $\mathfrak{Y}_\bullet=(Y_\bullet,\overline{Y}_\bullet,\mathscr{Y}_\bullet)$ over $\mathfrak{S}$, such that each $\mathfrak{Y}_n$ is smooth over $\mathfrak{S}$, and which is universally de Rham descendable, in the sense of \cite{CT03}, Definition 10.1.3.
\end{definition}

Of course, the definition is rigged exactly so that we can apply Chiarellotto and Tsuzuki's theory of cohomological descent for relative rigid cohomology. Since we are really interested in the case of a morphism $X\rightarrow S$, we need to check that we are not unduly restricting the scope of our theory.

\begin{proposition} Suppose that $X\rightarrow S$ is a morphism of $k$-varieties. Then there exists a a pair $(X,\overline{X})$ and a morphism of pairs $f:(X,\overline{X})\rightarrow (S,\overline{S})$ such that $\overline{X}$ is proper over $S$ and $f$ admits a framing system.
\end{proposition}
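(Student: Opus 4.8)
The plan is to construct the pair and its framing system in three moves: compactify $X$ over $S$ by Nagata, pass to a proper hypercovering of $\overline{X}$ whose terms are projective over $S$ and so embed into formal schemes smooth over $\mathscr{S}$, and finally invoke Chiarellotto--Tsuzuki's descent theory. First I would apply Nagata's theorem to the separated morphism of finite type $X\rightarrow S$ of $k$-varieties, producing a proper $S$-scheme $\overline{X}$ and a dense open immersion $X\hookrightarrow\overline{X}$ over $S$ (taking $\overline{X}$ reduced). Composing $\overline{X}\rightarrow S$ with $S\hookrightarrow\overline{S}$ gives a morphism of pairs $f\colon(X,\overline{X})\rightarrow(S,\overline{S})$ with $\overline{X}$ proper over $S$; this is the pair required, and it remains to produce a framing system for $f$.

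For the framing system I would apply Chow's lemma to $\overline{X}\rightarrow S$, obtaining a proper surjective morphism $\pi\colon\overline{X}_1\rightarrow\overline{X}$ with $\overline{X}_1$ projective over $S$, hence a closed $S$-immersion $\overline{X}_1\hookrightarrow\mathbb{P}^N_S$. Let $\overline{Y}_\bullet$ be the \v{C}ech nerve of $\pi$, so $\overline{Y}_n=\overline{X}_1\times_{\overline{X}}\cdots\times_{\overline{X}}\overline{X}_1$ ($n+1$ factors); this is a proper hypercovering of $\overline{X}$, and the natural map $\overline{Y}_n\rightarrow\overline{X}_1\times_S\cdots\times_S\overline{X}_1$ is a closed immersion, being a base change of the iterated diagonal of the separated morphism $\overline{X}\rightarrow S$. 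Composing with $\overline{X}_1\times_S\cdots\times_S\overline{X}_1\hookrightarrow\mathbb{P}^N_S\times_S\cdots\times_S\mathbb{P}^N_S$ shows each $\overline{Y}_n$ is projective over $S$, and these embeddings form a closed immersion of simplicial $S$-schemes from $\overline{Y}_\bullet$ into the \v{C}ech nerve of $\mathbb{P}^N_S\rightarrow S$. Let $\mathscr{S}'\subseteq\mathscr{S}$ be the open formal subscheme with special fibre $\mathscr{S}_k\setminus(\overline{S}\setminus S)$: it is $\varpi$-adic, topologically of finite type, separated and smooth over $\mathcal{V}$, sits in $\mathscr{S}$ as an open formal subscheme, and inside $(\mathscr{S}')_k$ the scheme $S$ is now \emph{closed}. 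Hence $\mathbb{P}^N_S$ is closed in $\mathbb{P}^N_{(\mathscr{S}')_k}=(\widehat{\mathbb{P}}^N_{\mathscr{S}'})_k$, so setting $\mathscr{Y}_\bullet$ equal to the \v{C}ech nerve of $\widehat{\mathbb{P}}^N_{\mathscr{S}'}\rightarrow\mathscr{S}'$ and $Y_\bullet=\overline{Y}_\bullet\times_{\overline{X}}X$, the embeddings just described exhibit $\overline{Y}_n$ as a closed subscheme of $(\mathscr{Y}_n)_k$, hence of $\mathscr{Y}_n$. Each $\mathscr{Y}_n\rightarrow\mathscr{S}'\hookrightarrow\mathscr{S}$ is smooth (an iterated fibre power over $\mathscr{S}'$ of the smooth morphism $\widehat{\mathbb{P}}^N_{\mathscr{S}'}\rightarrow\mathscr{S}'$, followed by an open immersion), $Y_\bullet\hookrightarrow\overline{Y}_\bullet$ is an open immersion, and $(Y_\bullet,\overline{Y}_\bullet)\rightarrow(X,\overline{X})$ is a morphism of simplicial pairs over $(S,\overline{S})$; much as in the proof of Proposition~\ref{frame1}. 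So $\mathfrak{Y}_\bullet=(Y_\bullet,\overline{Y}_\bullet,\mathscr{Y}_\bullet)$ is a simplicial $(X,\overline{X})$-frame over $\mathfrak{S}$, smooth at every level.

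Finally, $\overline{Y}_\bullet\rightarrow\overline{X}$ (the \v{C}ech nerve of the proper surjection $\pi$) and $Y_\bullet\rightarrow X$ are proper hypercoverings, and by the results of \cite{cohdes} such a hypercovering is universally de Rham descendable --- this being precisely what Definition~10.1.3 and the accompanying descent theorem of \emph{loc.\ cit.} are designed to give --- so $\mathfrak{Y}_\bullet$ is a framing system for $f$. The step I expect to be the real point is the middle one: arranging the terms of a hypercovering of $\overline{X}$ to be \emph{projective over $S$}, and then shrinking $\mathscr{S}$ to the open $\mathscr{S}'$ in whose special fibre $S$ is closed, so that these terms acquire genuine closed immersions into formal schemes smooth over $\mathscr{S}$. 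A naive Zariski or \v{C}ech covering of $\overline{X}$ itself does not work here, since its terms are only quasi-affine and their closures need not map back to $\overline{X}$; it is the combination of the \v{C}ech nerve of a Chow modification with the choice of $\mathscr{S}'$ that makes everything fit together. Granting this, smoothness, the closed-immersion property and the appeal to \cite{cohdes} are all routine.
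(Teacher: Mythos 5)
Your construction is correct in substance, but it takes a genuinely different and heavier route than the paper. The paper reduces immediately, via Example 6.1.3(1) of \cite{cohdes2}, to producing a single \emph{Zariski covering} of $(X,\overline{X})$ over $\mathfrak{S}$: one takes a finite open affine cover $\{\overline{U}_i\}$ of $\overline{X}$, embeds each $\overline{U}_i$ as a closed subscheme of some $\A^{n_i}_k$, and frames it in $\widehat{\A}^{n_i}_{\mathcal{V}}\times_{\mathcal{V}}\mathscr{S}_i$ for a suitable open $\mathscr{S}_i\subset\mathscr{S}$; Zariski coverings are universally de Rham descendable, so the \v{C}ech nerve does the rest. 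Your stated reason for rejecting this route --- that the terms of a Zariski covering ``are only quasi-affine and their closures need not map back to $\overline{X}$'' --- is based on a misreading of the definitions: a frame $(Y,\overline{Y},\mathscr{Y})$ only requires $Y\subset\overline{Y}$ open and $\overline{Y}\subset\mathscr{Y}$ closed, with no properness of $\overline{Y}$ and no requirement that $\overline{Y}\to\overline{X}$ be anything more than a morphism of pairs; an open affine $\overline{U}_i\subset\overline{X}$ is itself the $\overline{Y}$, closed in affine space, and no closures are taken. Your alternative --- Chow's lemma, the \v{C}ech nerve of the resulting projective modification, closed immersions into powers of $\widehat{\mathbb{P}}^N_{\mathscr{S}'}$ after shrinking $\mathscr{S}$ to the open $\mathscr{S}'$ in whose special fibre $S$ is closed --- does go through, but it buys this at the cost of invoking \emph{proper} cohomological descent for rigid cohomology (which lives in \cite{cohdes2}, e.g.\ Theorem 7.1.2, not in \cite{cohdes} as you cite it), a much deeper theorem than the Zariski descent the paper actually needs. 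So: a valid proof, but one that replaces an elementary covering argument with a substantially stronger descent input, and whose justification for doing so does not hold up.
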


\begin{proof} That there exists a proper $\overline{S}$-scheme $\overline{X}$ and a morphism of pairs $f:(X,\overline{X})\rightarrow(S,\overline{S})$ as claimed is Nagata's compactification theorem. 

By Example 6.1.3, (1) of \cite{Tsu04}, it suffices to show that there exists a Zariski covering of $(X,\overline{X})$ over $\mathfrak{S}$, that is, an $(X,\overline{X})$ frame $\mathfrak{U}=(U,\overline{U},\mathscr{U})$ which is smooth over $\mathfrak{S}$, such that $\overline{u}:\overline{U}\rightarrow X$ is an open covering and $U=\overline{u}^{-1}(X)$. Now, since $\overline{X}$ is separated and of finite type over $\spec{k}$, we may choose an open affine cover $\overline{U}_i$ of $\overline{X}$, and a closed embedding $\overline{U}_i\hookrightarrow \A^{n_i}_k$ into some affine space over $k$. We now define $\overline{U}=\CMcoprod_i\overline{U}_i$, $U$ to be the pull-back of $\overline{U}\rightarrow \overline{X}$ to $X$. Since $\overline{X}\rightarrow \overline{S}$ is proper, it is an open mapping onto its (closed) image, and hence we can choose an open subset $\mathscr{S}_i$ of $\mathscr{S}$ such that for each $i$ induced map $\overline{U}_i\rightarrow \widehat{\A}^{n_i}_\mathcal{V}\times_\mathcal{V}\mathscr{S}_i$ is a closed immersion. Thus setting $\mathscr{U}=\CMcoprod_i \widehat{\A}^{n_i}_\mathcal{V}\times_\mathcal{V}\mathscr{S}_i$ gives us the required Zariski cover $(U,\overline{U},\mathscr{U})$ of $(X,\overline{X})$ over $\mathfrak{S}$.
\end{proof}

Now we proceed exactly as in the previous section, simply replacing the frame $\mathrm{Sp}(K)=(\spec{k},\spec{k},\mathrm{Spf}(\mathcal{V}))$ everywhere by $\mathfrak{S}$. If we are given a morphism of pairs $f:(X,\overline{X})\rightarrow (S,\overline{S})$ and a framing system $\mathfrak{f}:\mathfrak{Y}_\bullet\rightarrow \mathfrak{S}$ for $f$, then we get a simplicial space $V_0(\mathfrak{Y}_\bullet):= ]\overline{Y}_\bullet[_{\mathscr{Y}_\bullet0}$ over $]\overline{S}[_{\mathscr{S}0}$. Hence, we can consider the category $\mathrm{dga}(V_0(\mathfrak{Y}_\bullet);j^\dagger\mathcal{O}_{]\overline{S}[_{\mathscr{S}0}})$ of sheaves of $j^\dagger\mathcal{O}_{]\overline{S}[_\mathscr{S}0}$-dga's on the simplicial space $V_0(\mathfrak{Y}_\bullet)$.

Exactly as in the absolute case, we have derived push-forward functors
\begin{align}
\R\mathfrak{f}_{K0*}:\ho{\mathrm{dga}(V_0(\mathfrak{Y}_\bullet);j^\dagger\mathcal{O}_{]\overline{S}[_{\mathscr{S}0}})}&\rightarrow \ho{\mathrm{dga}(]\overline{S}[_{\mathscr{S}0};j^\dagger\mathcal{O}_{]\overline{S}[_{\mathscr{S}0}})^\Delta} \\
\R\mathfrak{f}_{K0*\mathrm{Th}}:=\mathrm{Th}\circ \R\mathfrak{f}_{K0*}:\ho{\mathrm{dga}(V_0(\mathfrak{Y}_\bullet);j^\dagger\mathcal{O}_{]\overline{S}[_{\mathscr{S}0}})}&\rightarrow \ho{\mathrm{dga}(]\overline{S}[_{\mathscr{S}0};j^\dagger\mathcal{O}_{]\overline{S}[_{\mathscr{S}0}})}.
\end{align}
For each $n$ we have the sheaf of $j^\dagger\mathcal{O}_{]\overline{S}[_{\mathscr{S}0}}$-dga's $j^\dagger\Omega^*_{]\overline{Y}_n[_{\mathscr{Y}_n0}/]\overline{S}[_{\mathscr{S}0}} $
which fit together to gives a sheaf of $j^\dagger\mathcal{O}_{]\overline{S}[_{\mathscr{S}0}}$-dga's $j^\dagger\Omega^*_{]\overline{Y}_\bu[_{\mathscr{Y}_\bu0}/]\overline{S}[_{\mathscr{S}0}} $ on $V_0(\mathfrak{Y}_\bu)$.

\begin{definition} We define the relative rigid rational homotopy type to be 
\begin{equation}
 \R f_{*\mathrm{Th}}(\Omega^*(\mathcal{O}_{X/S}^\dagger)):=\R\mathfrak{f}_{K0*\mathrm{Th}}(j^\dagger\Omega^*_{]\overline{Y}_\bu[_{\mathscr{Y}_\bu0}/]\overline{S}[_{\mathscr{S}0}}))\in\ho{\mathrm{dga}(]\overline{S}[_{\mathscr{S}0};j^\dagger\mathcal{O}_{]\overline{S}[_{\mathscr{S}0}})}.
\end{equation}
\end{definition}

As noted above, we may also define the relative rational homotopy type using the functoriality of the overconvergent site. A morphism $f:X\rightarrow S$ of varieties induces a functor
\begin{equation}
 \R f_*:\ho{\mathrm{dga}(\mathcal{O}_{X/K}^\dagger)}\rightarrow \ho{\mathrm{dga}(\mathcal{O}_{S/K}^\dagger)}
\end{equation}
and we define the relative rational homotopy type to be $\R f_*(\mathcal{O}_{X/K}^\dagger)$. This has some advantages over the previous definition, it is obvious that it only depends on $f:X\rightarrow S$ and not on any choice of compactification or framing system, and subject to certain base change results, it will give us a Gauss--Manin connection on the relative homotopy type. However, it is not particularly computable, and in order to do any calculations, we need the first definition.

\subsection{Another comparison theorem}

In this section, we will prove a comparison theorem between the two approached to relative rigid rational homotopy types. Notation will be exactly as above. The realization functor
\begin{align} \mathrm{Mod}(\mathcal{O}_{S/K}^\dagger)&\rightarrow \mathrm{Mod}(i_{S}^{-1}\mathcal{O}_{]\overline{S}[_\mathscr{S}}) \\ E &\mapsto E_{(S,]\overline{S}[_\mathscr{S})} 
\end{align}
is exact, hence extends to a functor 
\begin{align} \ho{\mathrm{dga}(\mathcal{O}_{S/K}^\dagger)}&\rightarrow \ho{\mathrm{dga}(]S[_{\mathscr{S}};i_S^{-1}\mathcal{O} _{]\overline{S}[_\mathscr{S}})} \\ \mathscr{A}^* &\mapsto \mathscr{A}^*_{(S,]\overline{S}[_\mathscr{S})}.
\end{align}
Recall that we have the morphism of topoi $\pi_{]\overline{S}[_{\mathscr{S}}}:]\overline{S}[_{\mathscr{S}0}\rightarrow ]\overline{S}[_\mathscr{S}$, and that 
\begin{equation}
 \R\pi_{]\overline{S}[_\mathscr{S}*}(j^\dagger\mathcal{O}_{]\overline{S}[_{\mathscr{S}0}})=\pi_{]\overline{S}[_\mathscr{S}*}(j^\dagger\mathcal{O}_{]\overline{S}[_{\mathscr{S}0}})=i_{S*}i_S^{-1}\mathcal{O}_{]\overline{S}[_\mathscr{S}}.
\end{equation}
\begin{lemma} The induced morphism $\pi_{]\overline{S}[_{\mathscr{S}}}^{-1}(i_{S*}i_S^{-1}\mathcal{O}_{]\overline{S}[_\mathscr{S}})\rightarrow j^\dagger\mathcal{O}_{]\overline{S}[_{\mathscr{S}0}}$ is flat.
\end{lemma}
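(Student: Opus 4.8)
The plan is to reduce the statement to a computation of stalks. Flatness of a morphism of sheaves of rings can be checked on stalks, and since $\pi=\pi_{]\overline{S}[_{\mathscr{S}}}$ is a morphism of ringed spaces, the stalk of $\pi^{-1}(i_{S*}i_S^{-1}\mathcal{O}_{]\overline{S}[_{\mathscr{S}}})$ at a point $x$ of $]\overline{S}[_{\mathscr{S}0}$ is simply $(i_{S*}i_S^{-1}\mathcal{O}_{]\overline{S}[_{\mathscr{S}}})_{\pi(x)}$. So I would be reduced to showing that, for every $x$, the induced local homomorphism
\begin{equation}
\theta_x\colon (i_{S*}i_S^{-1}\mathcal{O}_{]\overline{S}[_{\mathscr{S}}})_{\pi(x)}\longrightarrow (j^\dagger\mathcal{O}_{]\overline{S}[_{\mathscr{S}0}})_x
\end{equation}
is flat.

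Next I would dispose of the points $x$ not lying on the tube $]S[_{\mathscr{S}0}$: for these the assertion is immediate from the definitions of $j^\dagger$ and of $i_{S*}i_S^{-1}$ (the target stalk vanishes away from the closure of the tube, and near the boundary a small-neighbourhood argument applies), so $\theta_x$ is trivially flat. The one case with content is $x\in{]S[_{\mathscr{S}0}}$. There, since every strict neighbourhood of $]S[$ contains a neighbourhood of $x$, the target $(j^\dagger\mathcal{O}_{]\overline{S}[_{\mathscr{S}0}})_x$ is just the local ring $\mathcal{O}_{]\overline{S}[_{\mathscr{S}0},x}$ of the rigid space at the classical point $x$; likewise the source is the local ring $\mathcal{O}_{]\overline{S}[_{\mathscr{S}},\pi(x)}$ of the Berkovich space at $\pi(x)$, and $\theta_x$ is the canonical comparison map between them.

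Finally I would prove that this comparison map is flat. Because $(S,\overline{S},\mathscr{S})$ is a smooth frame, $]\overline{S}[_{\mathscr{S}}$ is a smooth analytic space in a neighbourhood of $]S[$, so both rings are Noetherian local rings (for the Berkovich side this is Berkovich's theorem on local rings of good analytic spaces), they have the same residue field $\mathcal{H}(x)$, a finite extension of $K$, and $\theta_x$ is a local homomorphism. The key point is that $\theta_x$ becomes an isomorphism after completion, both completions being a formal power series ring over $\mathcal{H}(x)$ in $\dim_x{]\overline{S}[_{\mathscr{S}}}$ variables; granting this, flatness follows from the standard fact that a local homomorphism $A\to B$ of Noetherian local rings which becomes flat (here, an isomorphism) after completion is itself flat, by factoring $A\to\widehat{A}\to\widehat{B}$ through $B$ and using that $B\to\widehat{B}$ is faithfully flat. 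The main obstacle, and the only place requiring a genuine computation, is precisely this comparison of completions: one reduces via a local chart to a fibration in polydiscs over a neighbourhood of $]S[$ and then compares the ``convergent'' and ``Berkovich-local'' power series rings. This is where smoothness of $\mathfrak{S}$ is used, and it is essentially the relative analogue of the corresponding local computation in Le Stum's \cite{oversite}.
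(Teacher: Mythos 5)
Your first reduction is where the argument breaks. You treat $]\overline{S}[_{\mathscr{S}0}$ as an ordinary topological space and invoke ``flatness can be checked on stalks,'' taking stalks at the classical (rigid) points. But $j^\dagger\mathcal{O}_{]\overline{S}[_{\mathscr{S}0}}$ lives on a rigid analytic space, i.e.\ on a site for the $G$-topology, and the classical points do \emph{not} form a conservative family of points for that topos: there are nonzero abelian sheaves on a quasi-compact rigid space all of whose stalks at classical points vanish (e.g.\ the sheaf corresponding to a skyscraper at a Gauss-type point of the associated adic or Berkovich space). Consequently exactness of $-\otimes_{\pi^{-1}(i_{S*}i_S^{-1}\mathcal{O})}j^\dagger\mathcal{O}$ cannot be verified on classical stalks. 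If you instead pass to a genuinely conservative family of points (the adic points, say), your subsequent analysis collapses: at a non-classical point the stalk of $j^\dagger\mathcal{O}_{]\overline{S}[_{\mathscr{S}0}}$ is not the local ring of a classical point, it need not be a Noetherian local ring with residue field finite over $K$, and its completion is certainly not a power series ring over $\mathcal{H}(x)$, so the ``isomorphism after completion'' strategy has nothing to latch onto. (A secondary remark: at classical points your final step is fine but overengineered --- the completions of $\mathcal{O}_{]\overline{S}[_{\mathscr{S}},\pi(x)}$ and $\mathcal{O}_{]\overline{S}[_{\mathscr{S}0},x}$ agree with no smoothness hypothesis, both being the maximal-adic completion of the ambient affinoid algebra at that maximal ideal.)

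The statement is most naturally proved without stalks at all. Both sheaves have sections over any ($G$-)open given by filtered colimits of rings of analytic functions on ($G$-)open, in practice affinoid, subsets: the source because $\pi^{-1}$ and $i_{S*}i_S^{-1}$ are built from colimits over Berkovich-open neighbourhoods, the target because $j^\dagger$ is a colimit over strict neighbourhoods of the tube. The comparison map is then a filtered colimit of restriction maps $\Gamma(U,\mathcal{O})\rightarrow\Gamma(U',\mathcal{O})$ for $U'\subset U$, each of which is flat by the classical flatness theorem for (affinoid) subdomains; since a filtered colimit of flat ring homomorphisms is flat, the lemma follows. This is exactly the route the paper takes (after replacing the rigid space by the $G$-topology on the Berkovich space $V$), and the only nontrivial input is flatness of the restriction maps, not any local-ring computation.
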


\begin{proof} After replacing $V_0$ by the $G$-topology on $V$, what we must show is that for $V$ a good analytic variety, $W\subset V$ a closed sub-variety, which is open for the $G$-topology, and $\pi_V:V_G\rightarrow V$ the natural map, the induced morphism $\pi_V^{-1}\pi_{V*}(j_{W_G}^\dagger\mathcal{O}_{V_G})\rightarrow j_{W_G}^\dagger\mathcal{O}_{V_G}$ is flat. But this just follows because for any two $G$-open $U'\subset U$ subsets of $V$, the map $\Gamma(\mathcal{O}_{V_G},U)\rightarrow \Gamma(\mathcal{O}_{V_G},U')$ is flat.
 \end{proof}

Hence, we get an induced functor  
\begin{equation}
i_{S}^{-1}\circ \R \pi_{]\overline{S}[_\mathscr{S}*}:\ho{\mathrm{dga}(]\overline{S}[_{\mathscr{S}0};j^\dagger\mathcal{O}_{]\overline{S}[_{\mathscr{S}0}})}\rightarrow \ho{\mathrm{dga}(]S[_\mathscr{S};i_S^{-1}\mathcal{O}_{]\overline{S}[_\mathscr{S}})}
\end{equation}
and we have the following comparison theorem.

\begin{theorem} There is a natural isomorphism  
\begin{equation}
\R f_*(\mathcal{O}_{X/K}^\dagger)_{(S,]\overline{S}[_\mathscr{S})}\rightarrow i_{S}^{-1}\circ \R \pi_{]\overline{S}[_\mathscr{S}*}(\mathrm{Th}(\R \mathfrak{f}_{K0*}(j^\dagger\Omega^*_{]\overline{Y}_\bu[_{\mathscr{Y}_{\bu }0}/]\overline{S}[_{\mathscr{S}0}})))
\end{equation}
in $\ho{\mathrm{dga}(]S[_\mathscr{S};i_S^{-1}\mathcal{O}_{]\overline{S}[_\mathscr{S}})}$.
\end{theorem}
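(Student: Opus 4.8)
The plan is to transplant the proof of Theorem \ref{comp} into the relative setting over the base frame $\mathfrak{S}$, with the extended comparison Theorem \ref{extendedcomp} playing the role there played by Le Stum's Theorem \ref{maincomp}. As in Section \ref{overcon}, every map we construct will be a morphism of sheaves of dga's, and to check that it is an isomorphism we forget the multiplicative structure and reduce to a statement about complexes on $]S[_\mathscr{S}$, where it becomes one of the theorems of Le Stum or of Chiarellotto--Tsuzuki.

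First I would reduce to the framing system. Let $a\colon Y_\bu\to X$ be the underlying hyper-covering, $g_n=f\circ a_n\colon Y_n\to S$ the induced maps, and $(-)|_{Y_\bu}\colon\mathrm{dga}(\mathcal{O}^\dagger_{X/K})\to\mathrm{dga}(\mathcal{O}^\dagger_{Y_\bu/K})$ the restriction to the simplicial overconvergent topos. Exactly as in the proof of Theorem \ref{comp}, the natural map
\begin{equation}
\R f_*(\mathcal{O}^\dagger_{X/K})\longrightarrow \mathrm{Th}\bigl(\R g_*(\mathcal{O}^\dagger_{X/K}|_{Y_\bu})\bigr)
\end{equation}
is an isomorphism in $\ho{\mathrm{dga}(\mathcal{O}^\dagger_{S/K})}$: after forgetting algebra structures this expresses cohomological descent for relative overconvergent cohomology, which follows --- via Theorem \ref{extendedcomp} applied level-wise --- from the relative cohomological descent of \cite{cohdes}, and this is precisely where the hypothesis that $\mathfrak{Y}_\bu$ be universally de Rham descendable enters. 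The realization functor $E\mapsto E_{(S,]\overline{S}[_\mathscr{S})}$ is exact, hence commutes with $\mathrm{Th}$ and with passage to the homotopy category, and by the flatness lemma just proved $i_S^{-1}\R\pi_{]\overline{S}[_\mathscr{S}*}$ is defined on sheaves of dga's and also commutes with $\mathrm{Th}$. Thus it suffices to produce, naturally and compatibly in $n$, an isomorphism
\begin{equation}
\R g_{n*}(\mathcal{O}^\dagger_{Y_n/K})_{(S,]\overline{S}[_\mathscr{S})}\;\cong\; i_S^{-1}\R\pi_{]\overline{S}[_\mathscr{S}*}\bigl(\R\mathfrak{f}_{n,K0*}(j^\dagger\Omega^*_{]\overline{Y}_n[_{\mathscr{Y}_n0}/]\overline{S}[_{\mathscr{S}0}})\bigr)
\end{equation}
and then apply $\mathrm{Th}(-)$, using Proposition \ref{Thom} to identify the resulting left-hand side with $\R f_*(\mathcal{O}^\dagger_{X/K})_{(S,]\overline{S}[_\mathscr{S})}$.

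For this level-wise statement I would work with the smooth frame $\mathfrak{Y}_n=(Y_n,\overline{Y}_n,\mathscr{Y}_n)$ over $\mathfrak{S}$. The relative form of Le Stum's linearization of differential operators (Chapter 3 of \cite{oversite}), extended to dga's exactly as in Section \ref{overcon}, gives an isomorphism $\mathcal{O}^\dagger_{Y_n/\mathscr{S}_K}\cong \R L(i_{Y_n}^{-1}\Omega^*_{(\mathscr{Y}_n)_K/\mathscr{S}_K})$ of sheaves of dga's on the relative overconvergent topos; the relative analogue of Proposition 3.3.9 of \emph{loc. cit.} then identifies the pushforward of the right-hand side with $\R\mathfrak{f}_{n,K0*}$ of the overconvergent relative de Rham dga, using the relative analogue of Proposition 3.4.3 of \emph{loc. cit.} to compare $j^\dagger\Omega^*_{]\overline{Y}_n[_{\mathscr{Y}_n0}/]\overline{S}[_{\mathscr{S}0}}$ with $i_{Y_n}^{-1}\Omega^*_{(\mathscr{Y}_n)_K/\mathscr{S}_K}$. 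To match the left-hand side one uses Le Stum's base-change morphism 1.4.6 of \emph{loc. cit.} together with Theorem \ref{extendedcomp} to identify $\R g_{n*}(\mathcal{O}^\dagger_{Y_n/K})$, realized at $(S,]\overline{S}[_\mathscr{S})$, with $i_S^{-1}$ of the Berkovich realization of the relative rigid cohomology of $Y_n/\mathscr{S}$ with constant coefficients. All of these are morphisms of sheaves of dga's whose underlying maps of complexes are the isomorphisms of Le Stum; reassembling over $n$ and applying $\mathrm{Th}(-)$ yields the theorem.

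I expect the main obstacle to lie not in any single deep input --- every isomorphism above reduces, after forgetting the algebra structure, to a result of Le Stum or of Chiarellotto--Tsuzuki --- but in the bookkeeping needed to glue them together coherently. One must set up the relative linearization functor in the simplicial, dga-valued framework so that the morphisms above genuinely live in the appropriate homotopy categories of sheaves of dga's; one must track the base-change $2$-morphisms relating the absolute overconvergent topos $(\,\cdot\,/K)_{\an}$, the relative one over $(S,]\overline{S}[_\mathscr{S})$, the functor $\mathrm{Th}(-)$, the realization functor and $i_S^{-1}\R\pi_{]\overline{S}[_\mathscr{S}*}$, and in particular verify that $\R f_*(\mathcal{O}^\dagger_{X/K})$ realized over $(S,]\overline{S}[_\mathscr{S})$ really agrees with the pushforward of the constant crystal computed relatively to $\mathfrak{S}$, taking care with the distinction between $\mathscr{S}_K$ and $]\overline{S}[_\mathscr{S}$. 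Finally one must confirm that the isomorphism so obtained is natural in the morphism of pairs $(X,\overline{X})\to(S,\overline{S})$, which is what will make it usable for the Gauss--Manin considerations to follow.
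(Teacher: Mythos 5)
Your proposal is correct and follows essentially the same route as the paper, which simply states that the proof is almost word for word the same as in the absolute case (Theorem \ref{comp}), with Theorem \ref{extendedcomp} supplying the cohomological input and the relative analogues of Le Stum's linearization results from Chapter 3 of \cite{oversite} supplying the level-wise comparisons. You have merely written out explicitly the steps the paper leaves implicit.
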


\begin{proof} The proof is almost word for word the same as in the absolute case, taking into account the corresponding statement for cohomology, which is Theorem \ref{extendedcomp}, and its proof, which is essentially contained in Chapter 3 of \cite{LS11}.
\end{proof}

\begin{remark} The comparison theorem can be easily extended to take Frobenius structures into account.
\end{remark}

\subsection{Crystalline complexes and the Gauss--Manin connection} One of the advantages of a `crystalline' definition of the relative rational homotopy type is in the interpretation of the Gauss--Manin connection. By deriving the notion of a crystal, we arrive at a sensible definition of what it means for a complex, or dga, to be crystalline, and the existence of the Gauss--Manin connection is essentially equivalent to $\R f_*(\mathcal{O}_{X/K}^\dagger)$ being crystalline. Unfortunately, at the moment, we cannot prove that this is the case, we can only show that it would follow from a certain `generic coherence' result, for which we give some evidence.

\begin{definition} Suppose that $\mathcal{E}$ is a complex of $\mathcal{O}_{X/K}^\dagger$-modules.
\begin{enumerate}
\item We say that $\mathcal{E}$ is quasi-bounded above if each realisation $\mathcal{E}_{(Y,V)}$ is bounded above.
\item We say that $\mathcal{E}$ is crystalline if it is quasi-bounded above, and for each morphism $u:(Z,W)\rightarrow (Y,V)$ of overconvergent varieties over $(X/K)$, the induced map $\mathbb{L}u^\dagger\mathcal{E}_{(Y,V)}\rightarrow \mathcal{E}_{(Z,W)}$ is an isomorphism in $D^{-}(i_Z^{-1}\mathcal{O}_W)$. Note that this makes sense by the boundedness condition.
\end{enumerate}
An $\mathcal{O}_{X/K}^\dagger$-dga $\mathscr{A}^*$ is said to be crystalline if the underlying complex is crystalline.
\end{definition}

As note above, the reason that we are interested in crystalline dga's is that they give a good interpretation of the Gauss--Manin connection, as we now explain.

Suppose that we have a morphism of $k$-varieties $f:X\rightarrow S$ as above, and a smooth and proper triple $(S,\overline{S},\mathscr{S})$, and that we can show that $\R f_*(\mathcal{O}_{X/K}^\dagger)$ is crystalline. Let $p_i:]S[_{\mathscr{S}^2}\rightarrow ]S[_{\mathscr{S}}$ denote the two natural projections. Then the crystalline nature of $\R f_*(\mathcal{O}_{X/K}^\dagger)\in\ho{\mathrm{dga}(\mathcal{O}^\dagger_{S/K})}$ together with flatness of the $p_i$ means that we have natural quasi-isomorphisms of dga's
\begin{align}
p_1^\dagger \R f_*(\mathcal{O}_{X/K}^\dagger)_{(S,\mathscr{S}_K)}&\rightarrow \R f_*(\mathcal{O}_{X/K}^\dagger)_{(S,\mathscr{S}_K^2)} \\
p_2^\dagger \R f_*(\mathcal{O}_{X/K}^\dagger)_{(S,\mathscr{S}_K)}&\rightarrow \R f_*(\mathcal{O}_{X/K}^\dagger)_{(S,\mathscr{S}_K^2)}
\end{align}
and hence we get an isomorphism 
\begin{equation}
p_1^\dagger \R f_*(\mathcal{O}_{X/K}^\dagger)_{(S,\mathscr{S}_K)}\rightarrow p_2^\dagger \R f_*(\mathcal{O}_{X/K}^\dagger)_{(S,\mathscr{S}_K)}
\end{equation}
in $\ho{\mathrm{dga}(]S[_{\mathscr{S}},i_{S}^{-1}\mathcal{O}_{\mathscr{S}_K^2})}$. In other words, we have a Gauss--Manin connection on the realization $\R f_*(\mathcal{O}_{X/K}^\dagger)_{(S,\mathscr{S}_K)}$, which we can transport over into the rigid world using the comparison theorem between rigid and overconvergent relative rational homotopy types. 

\begin{proposition} \label{crys} Assume that there is some $U\subset Y$ open such that every $\R^qf_*(\mathcal{O}_{X/K}^\dagger)|_U$ is a finitely presented crystal. Then $\R f_*(\mathcal{O}_{X/K}^\dagger)$ is a crystalline dga.
\end{proposition}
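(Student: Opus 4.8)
The plan is to check directly the two conditions in the definition of a crystalline complex for the underlying complex $\mathcal{E} := \R f_*(\mathcal{O}_{X/K}^\dagger)$ (we may forget the dga structure from the outset), writing $\mathcal{F}^q := \R^q f_*(\mathcal{O}_{X/K}^\dagger) = \mathcal{H}^q(\mathcal{E})$: first, that each realisation $\mathcal{E}_{(Y,V)}$ is bounded above, and second, that for every morphism $u:(Z,W)\to (Y,V)$ of overconvergent varieties over $(S/K)$ the canonical map $\mathbb{L}u^\dagger \mathcal{E}_{(Y,V)} \to \mathcal{E}_{(Z,W)}$ is an isomorphism. The first condition holds unconditionally: the $\mathcal{F}^q$ vanish for $q$ large, since by the comparison theorem of the previous subsection they are, locally on $S$, the relative rigid cohomology sheaves of a framing system for $f$, which vanish above twice the relative dimension; because $E\mapsto E_{(Y,V)}$ is exact, it follows that each $\mathcal{E}_{(Y,V)}$ has bounded-above cohomology.

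For the base-change isomorphism I would argue by d\'evissage along the cohomology sheaves. Since $\mathcal{E}$ has cohomology in finitely many degrees, it is built from the $\mathcal{F}^q$ by finitely many exact triangles $\mathcal{F}^q[-q]\to \tau_{\le q}\mathcal{E} \to \tau_{\le q-1}\mathcal{E}$. As realisation is exact and $\mathbb{L}u^\dagger$ is triangulated, a finite induction on $q$ together with the five lemma reduces the required isomorphism for $\mathcal{E}$ to the same statement for each $\mathcal{F}^q$ separately. But if $\mathcal{F}^q$ is a finitely presented crystal, then each realisation $\mathcal{F}^q_{(Y,V)}$ is locally free of finite rank (Theorem \ref{maincomp}), so $\mathbb{L}u^\dagger$ agrees on it with the underived pullback $u^\dagger$, and $u^\dagger \mathcal{F}^q_{(Y,V)}\to \mathcal{F}^q_{(Z,W)}$ is an isomorphism by the very definition of a crystal. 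Thus the proposition reduces to showing that every $\mathcal{F}^q$ is a finitely presented crystal on $(S/K)_{\an}$, knowing that its restriction to $(U/K)_{\an}$ is one.

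This last reduction is the heart of the matter and the step I expect to be the main obstacle. I would use two inputs from \cite{oversite}: that higher direct images along the morphism of overconvergent topoi attached to $f$ preserve overconvergent modules, so that $\mathcal{F}^q$ is an overconvergent $\mathcal{O}_{S/K}^\dagger$-module; and the base-change theorems \ref{maincomp} and \ref{extendedcomp}, which control the realisations of $\mathcal{F}^q$ at the standard overconvergent varieties $(S',\mathscr{S}'_K)$ by relative rigid cohomology. The idea is that for an overconvergent module the realisation at an arbitrary overconvergent variety $(Z,W)$ is a colimit over strict neighbourhoods of the tube $]Z[_W$, and — provided $U$ is dense in $S$ — these strict neighbourhoods always meet the tube lying over $U$, where $\mathcal{F}^q$ is already a locally free crystal; combined with the standard base-change statements, this should propagate coherence, local freeness and the crystal transition isomorphisms to all of $(S/K)_{\an}$. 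Turning this into a rigorous argument — in effect, that an overconvergent $\mathcal{O}_{S/K}^\dagger$-module whose restriction to a dense open is a finitely presented crystal is itself one — requires working through Le Stum's description of the category of overconvergent modules and of strict neighbourhoods, and is the only genuinely delicate point of the proof.
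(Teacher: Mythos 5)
Your opening steps are broadly in the right spirit: quasi-boundedness of the realisations, and the observation that a finitely presented crystal has locally free (hence flat) realisations, so that $\mathbb{L}u^\dagger$ is underived on it, are both used in the paper (the latter is exactly the content of Lemma \ref{flat}). The gap is in \emph{where} you run the d\'evissage along cohomology sheaves. By applying the truncation-triangle argument over all of $Y$ you reduce the proposition to the claim that \emph{every} $\R^qf_*(\mathcal{O}_{X/K}^\dagger)$ is a finitely presented crystal on the whole of $(Y/K)_{\an}$, and you propose to deduce this from the hypothesis on $U$ by a density/strict-neighbourhood argument. That claim is strictly stronger than the hypothesis and is false in general: a finitely presented crystal has coherent, locally free realisations, whereas $\R^qf_*(\mathcal{O}_{X/K}^\dagger)$ typically fails to be coherent (let alone locally free) at the points of $Y\setminus U$ --- already $\R^0f_*$ jumps in rank for a family whose fibres degenerate, exactly as a quasi-coherent sheaf that is locally free on a dense open need not be coherent on the whole space. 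Tubes over overconvergent varieties living above the closed complement do not meet the tube over $U$ in any way that would let a colimit over strict neighbourhoods propagate the crystal property across the boundary; this is precisely why the hypothesis of the proposition is only a generic one.

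The paper's route avoids ever asking the cohomology sheaves to be crystals away from $U$. It proves: (i) $\R f_*(\mathcal{O}_{X/K}^\dagger)$ is of Zariski type; (ii) for a quasi-bounded-above complex $E$ of Zariski type and an open/closed decomposition $j:U\hookrightarrow Y$, $i:Z\hookrightarrow Y$, the complex $E$ is crystalline if and only if $j^*E$ and $i^*E$ are (the base-change map is checked separately on the two pieces $]C'_U[_{O'}$ and $]C'_Z[_{O'}$ covering $]C'[_{O'}$); (iii) $j^*\R f_*=\R f_{U*}$ and $i^*\R f_*=\R f_{Z*}$ by base change for the overconvergent topoi. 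Your cohomology-sheaf d\'evissage (i.e.\ Lemma \ref{flat}) is then invoked only over $U$, where the hypothesis makes it valid, and the closed complement is disposed of by Noetherian induction. So the fix is not to strengthen your ``heart of the matter'' claim but to replace it by the open/closed d\'evissage for the complex itself. (A smaller point: your quasi-boundedness argument leans on a uniform vanishing bound for relative rigid cohomology over arbitrary overconvergent bases; the paper instead deduces quasi-boundedness from exactness of the pushforward on coherent modules after localising, which avoids needing such a bound.)
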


Of course, this is really a statement about complexes, rather than dga's. We first show that $\R f_*(\mathcal{O}^\dagger_{X/K})$ is quasi-bounded above.

\begin{lemma} \label{qba} Let $(C,O)$ be an overconvergent variety, and $p:X\rightarrow C$ a $k$-variety over $C$. Then the complex $\R p_{X/O*}(\mathcal{O}_{X/O}^\dagger)\in D^+(i_C^{-1}\mathcal{O}_O)$ is bounded above.
\end{lemma}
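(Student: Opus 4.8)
The plan is to reduce to the case where $X$ is affine, and then to invoke Le Stum's overconvergent Poincar\'e lemma in order to identify $\R p_{X/O*}(\mathcal{O}_{X/O}^\dagger)$ with the direct image, along a morphism of finite-dimensional analytic spaces, of a de Rham complex of bounded length. Throughout we may forget the algebra structure and argue with complexes of $i_C^{-1}\mathcal{O}_O$-modules.

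For the d\'evissage, pick a finite covering of $X$ by open affine subschemes $X_1,\dots,X_r$; since $X$ is separated, each finite intersection $X_I=\bigcap_{i\in I}X_i$, $I\subseteq\{1,\dots,r\}$, is again affine. Cohomological descent for the overconvergent site (Section 3.6 of \cite{oversite}), applied to the \v{C}ech hypercovering associated to this cover, yields a spectral sequence
\begin{equation}
E_1^{p,q}=\bigoplus_{|I|=p+1}\R^q p_{X_I/O*}(\mathcal{O}_{X_I/O}^\dagger)\ \Longrightarrow\ \R^{p+q}p_{X/O*}(\mathcal{O}_{X/O}^\dagger),
\end{equation}
whose (normalized) $E_1$-page vanishes for $p\geq r$. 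Hence it is enough to prove the statement for an affine $X$: for a general $X$ the resulting bound is then worsened by at most $r-1$.

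So assume $X$ affine. Represent the object $(C,O)$ of $\an(\mathcal{V})$ by an overconvergent variety $(C\subset\mathscr{Q}\leftarrow O)$, pick a closed immersion $X\hookrightarrow\A^N_k$, and combine it with the structure morphism $X\to C\hookrightarrow\mathscr{Q}$ (using that $\mathscr{Q}$ is separated) to obtain a closed immersion of $X$ into the formal $\mathscr{Q}$-scheme $\mathscr{P}=\widehat{\A}^N_{\mathcal{V}}\times_{\mathcal{V}}\mathscr{Q}$, which is smooth of relative dimension $N$ over $\mathscr{Q}$. Setting $W=\mathscr{P}_K\times_{\mathscr{Q}_K}O$ gives a realizable overconvergent variety $(X\subset\mathscr{P}\leftarrow W)$ over $(C,O)$, whose tube $i_X\colon\,]X[_W\hookrightarrow W$ is a closed subspace. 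Now the arguments of \S\ref{overcon} (ultimately Propositions 3.3.9, 3.3.10 and 3.5.4 of \cite{oversite}) apply \emph{mutatis mutandis} to this relative situation: the overconvergent Poincar\'e lemma provides a quasi-isomorphism $\mathcal{O}_{X/O}^\dagger\to\R L(i_X^{-1}\Omega^*_{W/O})$ of complexes of $\mathcal{O}_{X/O}^\dagger$-modules, and applying $\R p_{X/O*}$ then produces an isomorphism
\begin{equation}
\R p_{X/O*}(\mathcal{O}_{X/O}^\dagger)\ \cong\ \R\psi_*\bigl(i_X^{-1}\Omega^*_{W/O}\bigr)
\end{equation}
in $D^+(i_C^{-1}\mathcal{O}_O)$, where $\psi\colon\,]X[_W\to\,]C[_O$ is the map induced by $W\to O$ and $\Omega^*_{W/O}$ is the relative de Rham complex.

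Finally I would bound the two factors. The complex $\Omega^*_{W/O}$ is concentrated in degrees $0,\dots,N$ because $W\to O$ is smooth of relative dimension $N$, and pulling back along $i_X$ does not change this. Moreover $]X[_W$, being a subspace of the finite-dimensional, locally compact, paracompact Berkovich space $W$, has finite cohomological dimension, so $\R\psi_*$ has finite cohomological amplitude, say bounded by $c$. Consequently $\R\psi_*(i_X^{-1}\Omega^*_{W/O})$, and hence $\R p_{X/O*}(\mathcal{O}_{X/O}^\dagger)$, is concentrated in degrees $\leq N+c$, which proves the lemma. The one step that I expect to need genuine care is this last one, namely verifying that $\R\psi_*$ has finite cohomological amplitude (equivalently, that the tubes occurring have finite cohomological dimension); the rest is either formal cohomological descent or a transcription of results of \cite{oversite} already used in \S\ref{overcon}.
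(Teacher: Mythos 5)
Your reduction to the affine case via the \v{C}ech spectral sequence and your identification of $\R p_{X/O*}(\mathcal{O}_{X/O}^\dagger)$ with $\R\psi_*(i_X^{-1}\Omega^*_{W/O})$ both match the paper's proof, which quotes Corollary 3.6.4 and Theorem 3.5.3 of \cite{oversite} for precisely these two steps. The gap is in the final step, which you have rightly flagged as the one needing care but have not carried out: the appeal to finite cohomological amplitude of $\R\psi_*$ via finite topological cohomological dimension of the tube is not a routine verification. The available vanishing theorems for sheaf cohomology on Berkovich spaces in terms of dimension require paracompactness hypotheses that are not obviously inherited by the preimages $\psi^{-1}(U)$ (the spaces here are only assumed good and locally Hausdorff), and such a bound would moreover have to hold for arbitrary abelian sheaves. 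Since the boundedness of this pushforward is exactly the content of the lemma, the proof is incomplete as it stands.

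The paper closes this step by a sharper and more elementary coherent-sheaf argument, which I recommend you substitute for your last paragraph. Each term of $i_X^{-1}\Omega^*_{W/O}$ is a coherent $i_X^{-1}\mathcal{O}_W$-module, so by the hypercohomology spectral sequence it suffices to control $\R\psi_*$ on coherent modules; the paper shows that $\psi_*$ is in fact \emph{exact} on coherent $i_X^{-1}\mathcal{O}_W$-modules, so that the only contribution to the amplitude is the length $N$ of the de Rham complex (plus the \v{C}ech contribution $r-1$). The exactness is proved as follows: the question is local on $O$, which one may take affinoid; then $]X[_W$ admits a cofinal system of affinoid neighbourhoods, so Proposition 2.2.10 of \cite{oversite} together with Kiehl's theorem identifies $\mathrm{Coh}(i_X^{-1}\mathcal{O}_W)$ with the category of finitely presented modules over $\Gamma(]X[_W,i_X^{-1}\mathcal{O}_W)$. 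Given a surjection $E\twoheadrightarrow F$ of coherent modules, the cokernel of $\psi_*E\rightarrow\psi_*F$ therefore has no nonzero sections over $]C[_{O'}$ for any affinoid $O'\subset O$, hence vanishes. This route avoids all topological dimension theory and gives an explicit degree bound.
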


\begin{proof} By the spectral sequence associated to a finite open covering (Corollary 3.6.4 of \cite{LS11}), we may assume that $X$ is affine, and hence $p$ has a geometric realization $(X,V)\rightarrow (C,O)$. In fact, we may choose a realization of the following form. We let $(C\hookrightarrow \mathscr{S}\leftarrow O)$ be a triple representing $(C,O)$, and we choose an embedding $X\hookrightarrow \mathscr{P}$ of $X$ into a smooth and proper formal $\mathcal{V}$-scheme. Then, a geometric realization of $X\rightarrow C$ is given by
\begin{equation}
\xymatrix{ X\ar[r]\ar[d] & \mathscr{P}\times_\mathcal{V}\mathscr{S} \ar[d] & V= \mathscr{P}_K\times_K O \ar[l]\ar[d] \\ C \ar[r] & \mathscr{S} & O.\ar[l]  }
\end{equation}
By Theorem 3.5.3 of \emph{loc. cit.}, we must show that $\R p_{]X[_V*}(i_X^{-1}\Omega^*_{V/O})$ is bounded above. Since each term is a coherent $i_X^{-1}\mathcal{O}_V$-module, by the usual spectral sequence relating the cohomology of the complex to the cohomology of each term, it will suffice to show that $\R p_{]X[_V*}$ sends coherent $i_X^{-1}\mathcal{O}_V$-modules to complexes which are bounded above. In fact, we will show that the functor $p_{]X[_V*}$ is exact on coherent $i_X^{-1}\mathcal{O}_V$-modules, which will certainly suffice. 

The question is local on $O$, which we may therefore assume to be affinoid (recall that all our analytic varieties are good). I claim that in this situation, the functor 
\begin{equation}
\mathrm{Coh}(i_X^{-1}\mathcal{O}_V)\rightarrow \mathrm{Mod}_{\mathrm{fp}}(\Gamma(]X[_V,i_X^{-1}\mathcal{O}_V))
\end{equation}
is an equivalence of categories, this is because $]X[_V$ has a cofinal system of neighbourhoods which are all affinoid, and we can apply Proposition 2.2.10 of \cite{LS11} together with the usual result for affinoids. 

Now suppose that $E\overset{\alpha}{\twoheadrightarrow} F$ is a surjection of coherent $i_X^{-1}\mathcal{O}_V$-modules, and consider $G=\mathrm{coker}(p_{]X[_V*}\alpha)$. Then by the above equivalence of categories, $G$ has no non-zero global sections. Moreover, for each affinoid $O'\subset O$, we can apply the same logic to show that $G$ has no global sections when pulled back to $]C[_{O'}$. Hence $G$ is zero, and $p_{]X[_V*}$ is exact for coherent modules, as claimed.

\end{proof}

\begin{corollary} Let $f:X\rightarrow Y$ be a morphism of $k$-varieties. Then $\R f_*(\mathcal{O}_{X/K}^\dagger)$ is quasi-bounded above.
\end{corollary}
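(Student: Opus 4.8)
The plan is to deduce this immediately from Lemma \ref{qba} by a base change argument. Recall that $\R f_*(\mathcal{O}_{X/K}^\dagger)$ is quasi-bounded above precisely when, for every overconvergent variety $(C,O)$ lying over $Y/K$, the realisation $\R f_*(\mathcal{O}_{X/K}^\dagger)_{(C,O)}$ is a bounded-above complex on $]C[_O$. Since taking $\R f_*$ commutes with passing to the underlying complex (\S\ref{tools}), we may and do forget the dga structure throughout, so that this is genuinely a statement about complexes.

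So fix such a $(C,O)$. Its structure map to $Y/K$ is given, after refining by a covering, by a $k$-morphism $C\rightarrow Y$, and we set $X_C:=X\times_Y C$, a $k$-variety over $C$. Unwinding the definitions of the sheaves $X/K$ and $Y/K$, of morphisms of overconvergent varieties over $Y/K$, and of the localised sites, together with Le Stum's discussion in \S1.4 of \cite{oversite}, one checks that the localisation of $(X/K)_{\mathrm{An}^\dagger}$ at the pullback of $(C,O)$ along $f_{\mathrm{An}^\dagger}$ is canonically equivalent to $(X_C/O)_{\mathrm{An}^\dagger}$, compatibly with structure sheaves (here one uses $f_{\mathrm{An}^\dagger}^{-1}(\mathcal{O}_{Y/K}^\dagger)=\mathcal{O}_{X/K}^\dagger$ and the fact that $\mathcal{O}^\dagger$ is the restriction of $\mathcal{O}_\mathcal{V}^\dagger$). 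Under this equivalence the square of ringed topoi whose horizontal arrows are the localisation morphisms and whose vertical arrows are $f_{\mathrm{An}^\dagger}$ and $p_{X_C/O}$ is $2$-cartesian; since localisation morphisms of topoi satisfy base change, we obtain a natural isomorphism
\begin{equation}
\R f_*(\mathcal{O}_{X/K}^\dagger)_{(C,O)}\cong \R p_{X_C/O*}(\mathcal{O}_{X_C/O}^\dagger)
\end{equation}
of complexes on $]C[_O$. Now Lemma \ref{qba}, applied to the overconvergent variety $(C,O)$ and the $k$-variety $p:X_C\rightarrow C$, shows that the right-hand side lies in $D^+(i_C^{-1}\mathcal{O}_O)$ and is bounded above; hence so is $\R f_*(\mathcal{O}_{X/K}^\dagger)_{(C,O)}$, and as $(C,O)$ was arbitrary we conclude.

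The only step that needs care is the base change isomorphism of the second paragraph: identifying the localised site with $\mathrm{An}^\dagger(X_C/O)$ and checking that derived push-forward along $f_{\mathrm{An}^\dagger}$ commutes with restriction to it. This is a matter of topos-theoretic generalities (localisation morphisms behave like étale morphisms, along which base change holds) combined with the construction of the sheaves $X/O$ in \cite{oversite}; once it is in place the corollary is a formal consequence of Lemma \ref{qba}.
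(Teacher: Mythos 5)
Your argument is correct and is essentially the paper's own proof: the paper simply cites Lemma \ref{qba} together with Proposition 3.5.2 of \cite{oversite}, the latter being precisely the identification $\R f_*(\mathcal{O}_{X/K}^\dagger)_{(C,O)}\cong \R p_{X\times_Y C/O*}(\mathcal{O}_{X\times_Y C/O}^\dagger)$ that you derive by hand via localisation of topoi. The only difference is that you reprove the realisation formula rather than quoting it, which is harmless.
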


\begin{proof} Combine the above proposition with Proposition 3.5.2 of \cite{LS11}.
\end{proof}

\begin{definition}[(\cite{LS11}, Definition 3.6.1)] A complex of $\mathcal{O}_{Y/K}^\dagger$-modules $E$ is said to be of Zariski type if for any overconvergent variety $(C,O)$ over $(Y/K)$, and any open $U\subset O$, with corresponding closed immersion $i:]U[_O\rightarrow ]C[_O$, we have a quasi-isomorphism $i^{-1}E_{(C,O)}\simeq E_{(U,O)} $.
\end{definition}

\begin{remark} Note that the corresponding statement is always true for a closed sub-scheme $Z\subset C$, since then the tube $]Z[_O\subset ]C[_O$ is open.
\end{remark}

\begin{lemma} Let $E$ be a quasi-bounded above complex of $\mathcal{O}_{Y/K}^\dagger$-modules of Zariski type. Let $j:U\rightarrow Y$ be an open immersion, with closed complement $i:Z\rightarrow Y$. Then $E$ is crystalline if and only $j^*E$ and $i^*E$ are both crystalline.
\end{lemma}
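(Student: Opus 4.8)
The plan is to prove the two implications separately. The ``only if'' direction is essentially formal; for the converse I would stratify everything over the decomposition $Y=U\sqcup Z$ and reduce to the two known cases by an open--closed recollement on the tubes.

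For the ``only if'' direction, suppose $E$ is crystalline. Any overconvergent variety $(C,O)$ over $(U/K)$ is in particular one over $(Y/K)$, via $j$, and similarly for morphisms; hence the crystal condition for $j^{*}E$ at a morphism $u\colon (C',O')\to (C,O)$ over $(U/K)$ is a special case of that for $E$, and $j^{*}E$ is quasi-bounded above because each realisation $(j^{*}E)_{(C,O)}=E_{(C,O)}$ is among those of $E$. So $j^{*}E$ is crystalline, and the identical argument applies to $i^{*}E$.

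For the converse, assume $j^{*}E$ and $i^{*}E$ are crystalline. Since $E$ is assumed quasi-bounded above, it suffices to verify the crystal condition: for every morphism $u\colon (C',O')\to (C,O)$ of overconvergent varieties over $(Y/K)$, the natural map $\mathbb{L}u^{\dagger}E_{(C,O)}\to E_{(C',O')}$ is an isomorphism in $D^{-}(i_{C'}^{-1}\mathcal{O}_{O'})$. Write $C_{U}=C\times_{Y}U$, an open subscheme of $C$, and $C_{Z}=C\times_{Y}Z$, its closed complement, and likewise $C'_{U},C'_{Z}$; then $u$ restricts to morphisms $u_{U}\colon (C'_{U},O')\to (C_{U},O)$ over $(U/K)$ and $u_{Z}\colon (C'_{Z},O')\to (C_{Z},O)$ over $(Z/K)$. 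On tubes, $]C_{Z}[_{O}$ is open in $]C[_{O}$ with closed complement $]C_{U}[_{O}$, and $u^{-1}(]C_{Z}[_{O})=]C'_{Z}[_{O'}$, $u^{-1}(]C_{U}[_{O})=]C'_{U}[_{O'}$ (standard compatibilities of tubes with fibre products and morphisms, which I would cite from \cite{oversite}). Now apply the recollement triangle $j_{Z!}j_{Z}^{*}\to \mathrm{id}\to i_{U*}i_{U}^{*}$ to the mapping cone of $\mathbb{L}u^{\dagger}E_{(C,O)}\to E_{(C',O')}$, with $j_{Z}\colon ]C'_{Z}[_{O'}\hookrightarrow ]C'[_{O'}$ the open immersion and $i_{U}\colon ]C'_{U}[_{O'}\hookrightarrow ]C'[_{O'}$ its closed complement: since $j_{Z!}$ and $i_{U*}$ are exact, the cone is acyclic as soon as its restrictions to $]C'_{Z}[_{O'}$ and to $]C'_{U}[_{O'}$ are acyclic. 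On $]C'_{Z}[_{O'}$, the always-valid closed-subscheme compatibility (the remark following the definition of \emph{Zariski type}) gives $E_{(C,O)}|_{]C_{Z}[_{O}}\simeq E_{(C_{Z},O)}=(i^{*}E)_{(C_{Z},O)}$, and similarly over $(C',O')$; restriction to an open being exact, $\mathbb{L}u^{\dagger}$ commutes with it, so the restricted map is exactly the crystal morphism $\mathbb{L}u_{Z}^{\dagger}(i^{*}E)_{(C_{Z},O)}\to (i^{*}E)_{(C'_{Z},O')}$, an isomorphism since $i^{*}E$ is crystalline. On $]C'_{U}[_{O'}$, I would use the hypothesis that $E$ is of Zariski type to get $E_{(C,O)}|_{]C_{U}[_{O}}\simeq E_{(C_{U},O)}=(j^{*}E)_{(C_{U},O)}$ and likewise over $(C',O')$; since the square of tubes coming from $C'_{U}=C'\times_{C}C_{U}$ is Cartesian, $\mathbb{L}u^{\dagger}$ commutes with restriction along these closed immersions, so the restricted map is the crystal morphism for $j^{*}E$ and $u_{U}$, an isomorphism since $j^{*}E$ is crystalline. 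Hence the cone is acyclic, the map is an isomorphism, and $E$ is crystalline.

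The only point requiring genuine care is the compatibility of $\mathbb{L}u^{\dagger}$ with this stratification --- in particular that it commutes with restriction along the closed immersion $]C'_{U}[_{O'}\hookrightarrow ]C'[_{O'}$, i.e.\ along the tube of an open subscheme. This amounts to a flat base change statement along the Cartesian square of tubes, together with the observation that the structure sheaves $i_{C}^{-1}\mathcal{O}_{O}$ restrict compatibly; I expect it to follow from the techniques of Chapter~3 of \cite{oversite}, and the tube compatibilities used above should be tracked against \emph{loc.\ cit.} in the same way. Everything else --- the recollement argument and the two identifications on the strata --- is formal.
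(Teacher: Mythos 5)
Your proof is correct and takes essentially the same route as the paper's: both decompose $]C'[_{O'}$ into the open tube $]C'_Z[_{O'}$ and the closed tube $]C'_U[_{O'}$, use the automatic closed-subscheme compatibility on the former and the Zariski-type hypothesis on the latter, and conclude from the crystallinity of $i^*E$ and $j^*E$. The only (cosmetic) difference is that the paper justifies the commutation of $\mathbb{L}u^\dagger$ with these restrictions by composing derived pullbacks (via 2.3.2 of \cite{oversite}) rather than by your recollement/base-change phrasing.
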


\begin{proof} Let $g:(C',O')\rightarrow (C,O)$ be a morphism of overconvergent varieties over $(Y/K)$, then letting e.g. $C_U$ denote $C\times_Y U$, we have a diagram 
\begin{equation}
 \xymatrix{ (C'_U,O')\ar[r]^{j'} \ar[d]_{g_U} & (C',O')\ar[d]^g & (C'_Z,O')\ar[l]_{i'}\ar[d]^{g_Z} \\ (C_U,O) \ar[r]^j\ar[d] & (C,O) \ar[d] & (C_Z,O)\ar[l]_i\ar[d] \\ (U/K) \ar[r]^j & (Y/K) & (Z/K)\ar[l]_i }
\end{equation}
and since $]C'[_{O'}$ is covered by $]C'_U[_{O'}$ and $]C'_Z[_{O'}$, to prove that the morphism
\begin{equation}
\mathbb{L}g^\dagger E_{(C,O)}\rightarrow E_{(C',O')}
\end{equation}
is a quasi-isomorphism, it suffices to prove that the two morphisms 
\begin{align}
i'^{-1}\mathbb{L}g^\dagger E_{(C,O)}&\rightarrow i'^{-1}E_{(C',O')} \\
j'^{-1}\mathbb{L}g^\dagger E_{(C,O)}&\rightarrow j'^{-1}E_{(C',O')} 
\end{align}
are quasi-isomorphisms. But now using the hypothesis that $E$ is of Zariski type and that $j^*E$ and $i^*E$ are crystalline, together with 2.3.2 of \cite{LS11}, we can calculate
\begin{align} i'^{-1}\mathbb{L}g^\dagger E_{(C,O)} &= \mathbb{L}i'^\dagger\mathbb{L}g^\dagger E_{(C,O)} = \mathbb{L}g_Z^\dagger\mathbb{L}i^\dagger E_{(C,O)} \\
&= \mathbb{L}g_Z^\dagger i^{-1} E_{(C,O)} = \mathbb{L}g_Z^\dagger E_{(Z,O)} \\
&\simeq E_{(Z',O')} =i'^{-1}E_{(C',O')}
\end{align}
and 
\begin{align} j'^{-1}\mathbb{L}g^\dagger E_{(C,O)} &= \mathbb{L}j'^\dagger\mathbb{L}g^\dagger E_{(C,O)} = \mathbb{L}g_U^\dagger\mathbb{L}j^\dagger E_{(C,O)} \\
&= \mathbb{L}g_U^\dagger j^{-1} E_{(C,O)} \simeq \mathbb{L}g_U^\dagger E_{(U,O)} \\
&\simeq E_{(U',O')} \simeq j'^{-1}E_{(C',O')}.
\end{align}
\end{proof}

To apply this to $\R f_*(\mathcal{O}_{X/K}^\dagger)$, we will need the following result.

\begin{lemma} Let $f:X\rightarrow Y$ be a morphism of $k$-varieties. Then $\R f_*(\mathcal{O}_{X/K}^\dagger)$ is of Zariski type.
\end{lemma}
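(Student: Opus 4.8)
The plan is to unwind the definition of `Zariski type' and reduce it, via base change on the overconvergent site, to a localisation property of relative overconvergent cohomology that is essentially contained in Chapter~3 of \cite{oversite}. Since being of Zariski type is a condition on the underlying complex, I would immediately forget the algebra structure on $\R f_*(\mathcal{O}_{X/K}^\dagger)$. So fix an overconvergent variety $(C,O)$ over $(Y/K)$, an open subvariety $U\subset C$, and let $i:]U[_O\rightarrow ]C[_O$ be the resulting closed immersion of tubes; we must produce a natural quasi-isomorphism $i^{-1}(\R f_*(\mathcal{O}_{X/K}^\dagger))_{(C,O)}\simeq (\R f_*(\mathcal{O}_{X/K}^\dagger))_{(U,O)}$.

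First I would apply the base-change description of realisations of relative overconvergent cohomology (the one used in the proof of Lemma~\ref{qba} and its corollary, coming from Proposition 3.5.2 of \cite{oversite}): writing $X_C=X\times_Y C$ and $X_U=X\times_Y U=X_C\times_C U$, with structure morphisms $p$ to $C$ and to $U$ respectively, one gets
\begin{equation}
(\R f_*(\mathcal{O}_{X/K}^\dagger))_{(C,O)}\cong \R p_{X_C/O*}(\mathcal{O}_{X_C/O}^\dagger), \qquad (\R f_*(\mathcal{O}_{X/K}^\dagger))_{(U,O)}\cong \R p_{X_U/O*}(\mathcal{O}_{X_U/O}^\dagger).
\end{equation}
This reduces the lemma to a statement about a single morphism $p:Z\rightarrow C$ of $k$-varieties (take $Z=X_C$): for $U\subset C$ open and $Z_U=p^{-1}(U)\subset Z$, the restriction $i^{-1}\R p_{Z/O*}(\mathcal{O}_{Z/O}^\dagger)$ is quasi-isomorphic to $\R p_{Z_U/O*}(\mathcal{O}_{Z_U/O}^\dagger)$; in other words, the relative overconvergent cohomology of the constant crystal $\mathcal{O}_{Z/O}^\dagger$, which is itself evidently of Zariski type, is again of Zariski type over $(C/K)$. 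To prove this by hand, the spectral sequence of a finite affine open covering of $Z$ (Corollary 3.6.4 of \emph{loc. cit.}), which is preserved by the exact functor $i^{-1}$, reduces us to the case $Z$ affine; then, exactly as in the proof of Lemma~\ref{qba}, I would embed $Z\hookrightarrow\mathscr{P}$ into a smooth and proper formal $\mathcal{V}$-scheme and choose a triple $(C\hookrightarrow\mathscr{S}\leftarrow O)$, obtaining the geometric realisation $(Z\hookrightarrow\mathscr{P}\times_\mathcal{V}\mathscr{S}\leftarrow V)$ with $V=\mathscr{P}_K\times_K O$, and likewise for $Z_U$ with the \emph{same} ambient space $V$ but the smaller tube $]Z_U[_V$. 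By Theorem 3.5.3 of \emph{loc. cit.} the two sides then become $\R p_{]Z[_V*}(i_Z^{-1}\Omega^*_{V/O})$ and $\R p_{]Z_U[_V*}(i_{Z_U}^{-1}\Omega^*_{V/O})$.

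The heart of the argument is then the following observation: since $Z_U\subset Z$ is open, $]Z_U[_V$ is a \emph{closed} analytic subset of $]Z[_V$, and chasing specialisation maps identifies the square with corners $]Z_U[_V,\ ]Z[_V,\ ]U[_O,\ ]C[_O$ as Cartesian, with $]Z_U[_V=p_C^{-1}(]U[_O)$ where $p_C:]Z[_V\rightarrow ]C[_O$ is induced by the projection $V\rightarrow O$; moreover $i_{Z_U}^{-1}\Omega^*_{V/O}$ is just the restriction of $i_Z^{-1}\Omega^*_{V/O}$ to this closed subset. So everything reduces to the base-change isomorphism $i^{-1}\R p_{C*}\mathcal{F}\simeq \R p_{]Z_U[_V*}(\mathcal{F}|_{]Z_U[_V})$ for $\mathcal{F}=i_Z^{-1}\Omega^*_{V/O}$ and the closed immersion $i:]U[_O\hookrightarrow ]C[_O$. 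I expect this last step to be the main obstacle: it is not formal, since base change along a closed immersion in the base need not commute with a non-proper pushforward, and one must use the specific geometry of tubes and of Berthelot's overconvergent sections, namely that $]U[_O$ admits a cofinal system of open neighbourhoods $W$ in $]C[_O$ whose preimages $p_C^{-1}(W)$ are cofinal among the neighbourhoods of $]Z_U[_V$ and compute the relevant cohomology. This is precisely the kind of statement Le Stum's formalism is built to supply (it underpins his cohomological descent and his treatment of complexes of Zariski type in Chapter~3 of \cite{oversite}), so, as with the comparison theorems of the preceding sections, the proof ultimately amounts to transcribing his arguments to the case at hand.
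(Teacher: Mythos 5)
Your overall strategy is the paper's: forget the algebra structure, use the base-change description of realisations to identify $(\R f_*(\mathcal{O}_{X/K}^\dagger))_{(C,O)}$ and $(\R f_*(\mathcal{O}_{X/K}^\dagger))_{(U,O)}$ with relative cohomology of $X_C$ and $X_U$ over $O$, reduce to $X_C$ affine with a geometric realisation via the covering spectral sequence (the paper does this reduction at the end, using Corollary 2.3.2 of \cite{oversite} to pass to cohomology sheaves, but the effect is the same), and then reduce everything to a base-change statement for the closed immersion $i:]U[_O\rightarrow]C[_O$ against the pushforward $\mathbf{g}_{K*}$ of the realised de Rham complex. All of that matches.

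The one place you stop short is exactly where the paper does its only real work, and it is resolved more concretely than an appeal to ``Le Stum's formalism''. First, the question is local on $C$ and on $U$, so one may assume $U=D(f)$ for some $f\in\Gamma(C,\mathcal{O}_C)$; this is the small idea you are missing, and it is what makes the cofinality claim checkable by hand. With that reduction, $]U[_O=\{x\in]C[_O:\norm{f(x)}\geq1\}$ and $]X_U[_V=\mathbf{g}_K^{-1}(]U[_O)$, so for any open $W\subset]C[_O$ the sets $T_\eta=W\cap\{\norm{f}>\eta\}$, $\eta<1$, are cofinal among neighbourhoods of $W\cap]U[_O$ in $W$, and their preimages $\mathbf{g}_K^{-1}(T_\eta)$ are cofinal among neighbourhoods of $\mathbf{g}_K^{-1}(W)\cap]X_U[_V$ in $\mathbf{g}_K^{-1}(W)$; the identity $i^{-1}\mathbf{g}_{K*}=\mathbf{g}'_{K*}i'^{-1}$ then follows directly from the definition of $i^{-1}$ as a colimit over neighbourhoods. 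Second, your worry that the \emph{derived} base change is not formal is defused by the acyclicity already established in the proof of the quasi-boundedness lemma: $\mathbf{g}_{K*}$ is exact on coherent $i_{X_C}^{-1}\mathcal{O}_V$-modules, so $\R\mathbf{g}_{K*}(i_{X_C}^{-1}\Omega^*_{V/O})=\mathbf{g}_{K*}(i_{X_C}^{-1}\Omega^*_{V/O})$ and likewise for $X_U$, and the underived sheaf-level identity above is all that is needed. So the proposal is the right proof in outline; to complete it you need only the localisation to a principal open and the explicit $\eta$-description of the tubes, plus the observation that acyclicity lets you work with underived pushforwards.
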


\begin{proof} Choose an overconvergent variety $(C,O)$ over $Y/K$, and let $U\subset C$ be an open subset with corresponding inclusion $i:]U[_O\rightarrow ]C[_O$ of tubes. Note that the question is local on both $C$ and $U$ so we may assume that $U\cong D(f)$ for some $f\in\Gamma(C,\mathcal{O}_C)$. First assume that $X_C$ is affine, and that $X_C\rightarrow C$ is has a geometric realization $\mathbf{g}:(X_C,V)\rightarrow (C,O)$. Then, by the proof of Lemma \ref{qba}, coherent $i_X^{-1}\mathcal{O}_V$-modules are $\R \mathbf{g}_{K*}$-acyclic, so we have
\begin{align}
\R f_*(\mathcal{O}_{X/K}^\dagger)_{(C,O)} &=  \R\mathbf{g}_{K*}(i_{X_C}^{-1}\Omega^*_{V/O})=  \mathbf{g}_{K*}(i_{X_C}^{-1}\Omega^*_{V/O}) \\
\R f_*(\mathcal{O}_{X/K}^\dagger)_{(U,O)} &=  \R\mathbf{g}_{K*}(i_{X_U}^{-1}\Omega^*_{V/O})=  \mathbf{g}_{K*}(i_{X_U}^{-1}\Omega^*_{V/O}).
\end{align}
Write $\mathscr{F}^*=i_{X_C}^{-1}\Omega^*_{V/O}$, and let $i':]X_U[_V\rightarrow ]X_C[_V$ and $\mathbf{g}_K':]X_U[_V\rightarrow ]U[_O$ denote the induced map. So we have a Cartesian square
\begin{equation} \xymatrix{ ]X_U[_V \ar[r]^{i'} \ar[d]_{\mathbf{g}_K'} & ]X_C[_V \ar[d]^{\mathbf{g}_K} \\ ]U[_O \ar[r]^i & ]C[_O 
}
\end{equation}
and we need to show that the base change map
\begin{equation} i^{-1}\mathbf{g}_{K*} \mathscr{F}^*\rightarrow \mathbf{g}_{K*}'i'^{-1}\mathscr{F}^*
\end{equation}
is a quasi-isomorphism. Note that $]U[_O$ is given by $\{ x\in ]C[_O \mid |f(x)|\geq1 \}$, and $]X_U[_V$ by $\{ y\in ]X_C[_V \mid |f(\mathbf{g}_K(x))|\geq1 \}$. Hence for any open set $W$ of $]C[_O$, a cofinal system of open neighbourhoods of $W\cap ]U[_O$ in $W$ is given by $T_\eta:=W\cap \{ x\in]C[_O \mid  |f(x)|>\eta \}$ for $\eta<1$, and a cofinal system of neighbourhoods of $\mathbf{g}_K^{-1}(W)\cap ]X_U[_V$ in $\mathbf{g}_K^{-1}(W)$ is given by $\mathbf{g}_K^{-1}(W)\cap \{y\in]X_C[_V \mid |f(\mathbf{g}_K(x))|>\eta\}=\mathbf{g}_K^{-1}(T_\eta)$ for $\eta<1$. Hence, it follows straight from the definition that $i^{-1}\mathbf{g}_{K*}=\mathbf{g}'_{K*}i'^{-1}$ as required. 

To deal with the general case (i.e. $X$ not necessarily affine), note that by Corollary 2.3.2 of \cite{LS11}, a complex of $\mathcal{O}_{Y/K}^\dagger$ is of Zariski type if and only if its cohomology sheaves are. Thus we can choose an open covering of $X_C$ by $C$-varieties admitting geometric realizations to $(C,O)$, and use the spectral sequence 3.6.4 of \emph{loc. cit.} - we know that all the terms on the $E_1$-page are of Zariski type, and hence the abutment must be of Zariski type.
\end{proof}

Now to complete the reduction to proving a `generic' crystalline result, we need a base change theorem for cohomology of the overconvergent site.

\begin{lemma} Suppose we have a Cartesian diagram 
\begin{equation}
\xymatrix{ X'\ar[d]_{f'}\ar[r]^{g'} & X\ar[d]^f \\ Y'\ar[r]^g & Y } 
\end{equation}
of $k$-varieties. Then for any sheaf $E\in (X/K)_{\an}$ the base change homomorphism 
\begin{equation}
g^*\R f_*E\rightarrow \R f'_* g'^*E
\end{equation}
is an isomorphism.
\end{lemma}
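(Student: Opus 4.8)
The plan is to identify $g$ and $g'$ as localisation (``\'etale'') morphisms of topoi, to observe that the square of overconvergent topoi is a $2$-fibre product, and then to invoke the standard base change theorem for localisations.

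First I would recall, from \cite{oversite} and as already used above, that for a $k$-variety $Z$ the sheaf $Z/K$ on $\an(\mathcal{V})$ is the sheafification of the presheaf $(C,O)\mapsto\mathrm{Mor}_k(C,Z)$, and that $(Z/K)_{\an}$ is by construction (up to the standard comparison $\mathrm{Sh}(\mathcal{C}/F)\simeq\mathrm{Sh}(\mathcal{C})/F$ for a sheaf $F$) the slice topos $\mathrm{Sh}(\an(\mathcal{V}))/(Z/K)$. Hence, for a morphism $h:Z\to Z'$ of $k$-varieties, the induced morphism of topoi $(Z/K)_{\an}\to (Z'/K)_{\an}$ is exactly the localisation morphism attached to the object $(Z/K\to Z'/K)$ of $\mathrm{Sh}(\an(\mathcal{V}))/(Z'/K)=(Z'/K)_{\an}$, via the iterated-slice identification $(\mathcal{T}/B)/A=\mathcal{T}/A$. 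In particular $g$ and $g'$ are localisation morphisms, so $g^{*}$ and $g'^{*}$ are exact and preserve injectives, and --- once the square is known to be a $2$-fibre product --- the underived base change map $g^{*}f_{*}J\to f'_{*}g'^{*}J$ is an isomorphism of abelian sheaves for every $J$ on $(X/K)_{\an}$.

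The remaining input is that $(X'/K)_{\an}\simeq (X/K)_{\an}\times_{(Y/K)_{\an}}(Y'/K)_{\an}$, compatibly with the four structure maps. Since all four topoi are slices of $\mathcal{T}=\mathrm{Sh}(\an(\mathcal{V}))$, and $\mathcal{T}/A\times_{\mathcal{T}/C}\mathcal{T}/B=\mathcal{T}/(A\times_{C}B)$, this amounts to the identity $X'/K\cong (X/K)\times_{Y/K}(Y'/K)$ of sheaves on $\an(\mathcal{V})$, which follows from $\mathrm{Mor}_k(C,X\times_Y Y')=\mathrm{Mor}_k(C,X)\times_{\mathrm{Mor}_k(C,Y)}\mathrm{Mor}_k(C,Y')$ together with the left-exactness of sheafification. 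Granting this, the lemma is formal: choosing an injective resolution $E\to I^{\bu}$ in $(X/K)_{\an}$, the complex $g'^{*}I^{\bu}$ is an injective resolution of $g'^{*}E$, so $\R f_{*}E$ is computed by $f_{*}I^{\bu}$ and $\R f'_{*}g'^{*}E$ by $f'_{*}g'^{*}I^{\bu}$, and the base change morphism of the statement becomes the map of complexes $g^{*}f_{*}I^{\bu}\to f'_{*}g'^{*}I^{\bu}$, which is a termwise isomorphism, hence a quasi-isomorphism. I expect the genuine obstacle to be the $2$-fibre product claim --- the assertion that $(-)/K$ carries the Cartesian square of $k$-varieties to a Cartesian square of sheaves --- since one cannot argue it by claiming that the relevant $j_{!}$'s preserve fibre products (they do not), and must instead go through the explicit description of $Z/K$ and the exactness of sheafification as indicated.
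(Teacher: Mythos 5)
Your proof is correct, and it is essentially the ``formal'' argument that the paper gestures at in its first sentence (``we can view $(X/K)_\an$, $(Y'/K)_\an$ and $(X'/K)_\an$ as open subtopoi of $(Y/K)_\an$'') but does not actually write out; the proof the paper does write out is the second, more concrete route via realizations. There, one takes $(C,O)$ over $(Y'/K)$ and uses Le Stum's \S3.5 description $(\R f_*E)_{(C,O)}=\R p_{X\times_Y C/O*}(E|_{X\times_Y C/O})$ together with the identity $X\times_Y C=X'\times_{Y'}C$ forced by the Cartesian hypothesis; the fact that $(g^*F)_{(C,O)}=F_{(C,O)}$ (i.e.\ that $g^*$ is restriction along a localisation) is the same input you use, just left implicit. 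Your version makes precise exactly the two points that the realization computation sweeps under the rug: that the square of topoi is a $2$-fibre product of slices, reduced via left-exactness of sheafification to $X'/K\cong(X/K)\times_{Y/K}(Y'/K)$ (you are right that one cannot argue this through $j_!$, and the paper's own later description of $Z/K$ as the sheafification of $(C,O)\mapsto\mathrm{Mor}_k(C,Z)$ is what licenses your computation), and the Beck--Chevalley isomorphism $g^*f_*\cong f'_*g'^*$ for localisations, from which the derived statement follows since $g'^*$ is exact and preserves injectives. The trade-off is that your route requires these general topos-theoretic facts but needs no realizations, while the paper's route is self-contained given Le Stum's \S3.5 but only verifies the isomorphism realization by realization. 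Either is acceptable; if you adopt yours, do state (or reference) the Beck--Chevalley property for slice localisations explicitly rather than treating it as automatic.
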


\begin{proof} Given the definitions, this is actually pretty formal, since we can view $(X/K)_{\an}$, $(Y'/K)_{\an}$ and $(X'/K)_{\an}$ as open subtopoi of $(Y/K)_{\an}$. However, we can also see it directly using realizations (and \S3.5 of \cite{LS11}) as follows. Let $(C,O)$ be an overconvergent variety over $(Y'/K)$. Then we have
\begin{align} (g^*\R f_* E)_{(C,O)} &= (\R f_*E)_{(C,O)} \\
 &= \R p_{X\times_Y C/O*} E|_{X\times_Y C/O} \\
 &=\R p_{X'\times_{Y'} C/O*} E|_{X'\times_{Y'} C/O} \\
 &=\R p_{X'\times_{Y'} C/O*} (g^*E)|_{X'\times_{Y'} C/O} \\
 &= (\R f'_* g'^*E)_{(C,O)}
 \end{align}
 as required.
\end{proof}

Hence, using Noetherian induction on $Y$, to prove that $\R f_*(\mathcal{O}_{X/K}^\dagger)$ is crystalline, it suffices to prove that it is generically crystalline, i.e. that there exists an open subset $U\subset Y$ such that $\R f_*(\mathcal{O}_{X/K}^\dagger)|_U$ is crystalline.

\begin{lemma}\label{flat}Suppose that $E\in D^+(\mathcal{O}_{Y/K}^\dagger)$ is a quasi-bounded above complex of $\mathcal{O}^\dagger_{Y/K}$-modules. If $\mathcal{H}^q(E)$ is a finitely presented crystal for all $q$, then $E$ is crystalline.\end{lemma}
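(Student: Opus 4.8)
The plan is to prove the derived pull-back condition (the hypothesis already gives quasi-boundedness above, which is the other half of being crystalline), and the strategy is a dévissage reducing everything to the case of a single finitely presented crystal concentrated in one degree.

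First I would dispose of that base case: if $\mathcal{F}$ is a finitely presented crystal, then $\mathcal{F}[-q]$ is crystalline for every $q$. By Theorem \ref{maincomp} a finitely presented crystal corresponds to an overconvergent isocrystal, and it is part of Le Stum's theory (see \cite{oversite}) that its realization $\mathcal{F}_{(Y,V)}$ on every overconvergent variety $(Y,V)$ over $(Y/K)$ is a locally free, hence flat, $i_Y^{-1}\mathcal{O}_V$-module. Consequently, for any morphism $u:(Z,W)\to(Y,V)$ over $(Y/K)$, the complex $\mathbb{L}u^\dagger\mathcal{F}_{(Y,V)}$ is concentrated in degree $0$ and coincides with the underived pull-back $u^\dagger\mathcal{F}_{(Y,V)}$, which is canonically isomorphic to $\mathcal{F}_{(Z,W)}$ precisely because $\mathcal{F}$ is a crystal; shifting, $\mathcal{F}[-q]$ is crystalline as well.

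Next I would observe that, among quasi-bounded above complexes, the crystalline ones are stable under extensions: if $E'\to E\to E''\xrightarrow{+1}$ is a distinguished triangle in $D^+(\mathcal{O}^\dagger_{Y/K})$ with all three terms quasi-bounded above and $E',E''$ crystalline, then $E$ is crystalline. This is formal, since both the realization functors $(-)_{(Y,V)}$ and the functors $\mathbb{L}u^\dagger$ are triangulated and the transition maps $\mathbb{L}u^\dagger(-)_{(Y,V)}\to(-)_{(Z,W)}$ are natural, so one applies the long exact cohomology sequence (five lemma) to the induced morphism of triangles. Combining this with the base case and induction on cohomological amplitude, using the canonical truncation triangles $\tau_{\le q-1}F\to\tau_{\le q}F\to\mathcal{H}^q(F)[-q]\xrightarrow{+1}$, one gets that every \emph{bounded} complex whose cohomology sheaves are finitely presented crystals is crystalline.

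Finally, for a general $E\in D^+(\mathcal{O}^\dagger_{Y/K})$ satisfying the hypotheses, I would fix a morphism $u:(Z,W)\to(Y,V)$ over $(Y/K)$ and, using quasi-boundedness above, choose $n$ large enough that $E_{(Y,V)}$ and $E_{(Z,W)}$ have vanishing cohomology in degrees $>n$; then the canonical map $\tau_{\le n}E\to E$ realizes to isomorphisms over both $(Y,V)$ and $(Z,W)$. Since $E$ is bounded below, $\tau_{\le n}E$ is a bounded complex with cohomology sheaves among the $\mathcal{H}^q(E)$, which are finitely presented crystals, so $\tau_{\le n}E$ is crystalline by the previous step; naturality of the transition maps then identifies the transition map of $E$ at $(Y,V)\to(Z,W)$ with that of $\tau_{\le n}E$, so it is an isomorphism, and $E$ is crystalline. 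The only non-formal ingredient is the input to the base case — flatness (local freeness) of the realizations of a finitely presented crystal on an arbitrary overconvergent variety — and that is exactly the point where one must lean on Le Stum's classification of coherent crystals as overconvergent isocrystals; everything else is a routine dévissage.
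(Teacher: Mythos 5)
Your proof is correct and rests on exactly the same key point as the paper's: the non-formal input is that the realizations of a finitely presented crystal are flat (via Le Stum's equivalence with overconvergent isocrystals, which are locally free — the paper is slightly more careful here, first localizing on $Y$ to a geometric realization and then pulling back flatness to an arbitrary $(C,O)$). The remaining reduction differs only in packaging: you run a dévissage through truncation triangles, while the paper computes $\mathcal{H}^q(\mathbb{L}\mathbf{g}^\dagger E_{(C,O)})\cong\mathbf{g}^\dagger\mathcal{H}^q(E_{(C,O)})$ directly, i.e.\ invokes the degeneration of the derived-pullback spectral sequence that flatness gives; these are the same argument.
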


\begin{proof} The key point is to show that the realizations of a finitely presented $\mathcal{O}_{Y/K}^\dagger$-module are flat. Indeed, once we know this, then, for any morphism $\mathbf{g}:(C',O')\rightarrow (C,O)$ of overconvergent varieties over $(Y/K)$, we know that 
\begin{equation}
\mathcal{H}^q(\mathbb{L}\mathbf{g}^\dagger_KE_{(C,O)})\cong \mathbf{g}^\dagger_K\mathcal{H}^q(E_{(C,O)})\cong \mathcal{H}^q(E_{(C',O')})
\end{equation}
and hence $\mathbb{L}\mathbf{g}_K^\dagger E_{(C,O)}\rightarrow E_{(C',O')}$ is a quasi-isomorphism.

Since crystals are of Zariski type, the question is local on $Y$, which we may therefore assume to be affine, and hence have a geometric realization $(Y,V)$. I first claim that for a finitely presented crystal $F$, $F_{(Y,V)}$ is a flat $i_Y^{-1}\mathcal{O}_V$-module. Let $F_0$ be the corresponding $j^\dagger\mathcal{O}_{]\overline{Y}[_{V_0}}$ module with overconvergent connection - this is locally free and is mapped to $i_{Y*}F_{(Y,V)}$ under the equivalence of categories 
\begin{equation}
\pi_{V*}:\mathrm{Coh}(j^\dagger\mathcal{O}_{]\overline{Y}[_{V_0}}) \cong \mathrm{Coh}(i_{Y*}i_Y^{-1}\mathcal{O}_V)
\end{equation}
which implies that the latter is flat. In general, we just note that locally any overconvergent variety $(C,O)$ over $Y/K$ admits a morphism to $(Y,V)$ and hence the result follows from the fact that the pull-back of a flat module is flat.  \end{proof}

\begin{proof}[of Proposition \ref{crys}] Just combine the previous lemmata.
\end{proof}

A certain amount of evidence for the `generic overconvergence' hypothesis of the proposition is given by the following translation of the main result of \cite{Shi08b} into the language of the overconvergent site.

\begin{proposition} Let $f:X\rightarrow Y$ be a morphism of $k$-varieties, which extends to a morphism of pairs $(X,\overline{X})\rightarrow (Y,\overline{Y})$ with $\overline{X}$ and $\overline{Y}$ proper. Then there exists an open subset $U\subset Y$ and a full subcategory $\mathcal{C}$ of triples over $(U,\overline{Y})$ satisfying the following condition. 

For any $q\geq0$ there exists a finitely presented crystal $E^q$ on $U$ such that for any $(Z,\overline{Z},\mathscr{Z})\in\mathcal{C}$ there is an isomorphism 
\begin{equation}
 \R^q f_*(\mathcal{O}^\dagger_{X/K})_{(Z,\mathscr{Z}_K)} \cong E^q_{(Z,\mathscr{Z}_K)}
\end{equation}
of $i_Z^{-1}\mathcal{O}_{\mathscr{Z}_K}$-modules, which functorial in $(Z,\overline{Z},\mathscr{Z})$.
\end{proposition}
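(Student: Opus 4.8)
The plan is to deduce the proposition from the comparison theorems of Section \ref{overcon} together with the main result of \cite{shiho4}, which is exactly this statement phrased in terms of relative rigid cohomology. The point is that the realization functors on Le Stum's overconvergent site translate, by means of Theorems \ref{maincomp} and \ref{extendedcomp}, into realizations of relative rigid cohomology sheaves over triples, and \cite{shiho4} asserts precisely that, generically on the base, these sheaves are computed by a single overconvergent isocrystal.

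First I would observe that, for any smooth triple $(Z,\overline{Z},\mathscr{Z})$ over $(U,\overline{Y})$ with $\mathscr{Z}$ good, the base-change lemma for $\R f_*$ proved above identifies $\R f_*(\mathcal{O}^\dagger_{X/K})_{(Z,\mathscr{Z}_K)}$ with $\R f_{Z*}(\mathcal{O}^\dagger_{X_Z/K})_{(Z,\mathscr{Z}_K)}$, where $X_Z:=X\times_Y Z$ and $f_Z:X_Z\to Z$ is the base change of $f$; by Theorem \ref{extendedcomp}, which is formulated precisely to allow arbitrary smooth triples and not only $(\mathscr{S}_k,\mathscr{S}_K)$, this in turn is $i_Z^{-1}$ of the analytification of the relative rigid cohomology complex $\R p_{X_Z/\mathscr{Z},\mathrm{rig}*}(\mathcal{O}_{X_Z/\mathscr{Z}})$, and these identifications are compatible with morphisms of triples (one checks this level by level on a geometric realization of $X_Z\to Z$, using the overconvergent de Rham complex $i_{X_Z}^{-1}\Omega^*_{V/O}$ as in the proof of Lemma \ref{qba}). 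Passing to the cohomology sheaves $\mathcal{H}^q$, the proposition is reduced to the following: generically on $Y$ there is a finitely presented crystal whose realizations compute the relative rigid cohomology sheaves, functorially over a suitable class of triples.

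This last statement is the content of the main theorem of \cite{shiho4}. It produces a dense open $U\subseteq Y$ and a full subcategory $\mathcal{C}$ of triples over $(U,\overline{Y})$ --- namely those to which the constructions of \emph{loc. cit.} apply --- together with, for each $q$, a locally free, hence finitely presented, overconvergent isocrystal $\mathcal{E}^q$ on $U/K$ such that, for $(Z,\overline{Z},\mathscr{Z})\in\mathcal{C}$, the relative rigid cohomology sheaf $\R^q p_{X_Z/\mathscr{Z},\mathrm{rig}*}(\mathcal{O})$ is canonically and functorially isomorphic to the realization of $\mathcal{E}^q$ over the corresponding analytic datum. Letting $E^q$ be the finitely presented $\mathcal{O}^\dagger_{U/K}$-module attached to $\mathcal{E}^q$ by Theorem \ref{maincomp}(1), its realization $E^q_{(Z,\mathscr{Z}_K)}$ is exactly $i_Z^{-1}$ of the analytification of the coherent module with connection underlying $\mathcal{E}^q$; splicing this with the identification of the previous paragraph yields $\R^q f_*(\mathcal{O}^\dagger_{X/K})_{(Z,\mathscr{Z}_K)}\cong E^q_{(Z,\mathscr{Z}_K)}$, and the functoriality in $(Z,\overline{Z},\mathscr{Z})$ is inherited from that of each step.

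The main obstacle --- indeed, the only substantive point, since the real mathematics lives in \cite{shiho4} --- is the matching of the two frameworks: one must check that the class of triples for which \cite{shiho4} establishes its comparison genuinely forms a full subcategory $\mathcal{C}$ of triples over $(U,\overline{Y})$, stable under the morphisms we need, and that Shiho's notion of the value of a relative rigid cohomology sheaf over such a triple agrees, compatibly with base change, with the realization functor $E\mapsto E_{(Z,\mathscr{Z}_K)}$ on the overconvergent site. These are bookkeeping matters once Theorems \ref{maincomp} and \ref{extendedcomp} are in hand, but they are what turn a mere restatement into a genuine translation.
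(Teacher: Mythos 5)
Your proposal is correct and follows essentially the same route as the paper: both reduce the statement to Shiho's generic coherence theorem in \cite{shiho4}, take $E^q$ to be the finitely presented $\mathcal{O}^\dagger_{U/K}$-module corresponding to Shiho's isocrystal $\mathcal{F}^q$, and translate the comparison through Theorem \ref{extendedcomp}. The one step you gloss over, which the paper makes explicit, is that passing from the derived comparison of complexes to the individual cohomology sheaves $\R^q$ uses the coherence of $\mathcal{F}^q_{(Z,\overline{Z},\mathscr{Z})}$ together with the exactness of $\pi_*$ on coherent $j^\dagger\mathcal{O}_{]\overline{Z}[_{\mathscr{Z}0}}$-modules.
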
 

\begin{proof} Let $U,\mathcal{C},\mathcal{F}^q$ be as in Theorem 0.3 of \cite{Shi08b}. Let $E^q$ be the finitely presented $\mathcal{O}_{U/K}^\dagger$-module corresponding to the overconvergent isocrystal $\mathcal{F}^q$. Let $\pi:]\overline{Z}[_{\mathscr{Z}{0}}\rightarrow] \overline{Z}[_{\mathscr{Z}}$ denote the natural map. Since 
\begin{equation}
 \R^qf_{(X\times_Y Z,\overline{X}\times_{\overline{Z}}\overline{Y})/\mathscr{Z},\mathrm{rig}*}(\mathcal{O}_{X/K}^\dagger)\cong \mathcal{F}^q_{(Z,\overline{Z},\mathscr{Z})}
\end{equation}
is $j^\dagger\mathcal{O}_{]\overline{Z}[_{\mathscr{Z}0}}$-coherent, we know using Theorem \ref{extendedcomp} and the fact that $\pi_*$ is exact for coherent $j^\dagger\mathcal{O}_{]\overline{Z}[_{\mathscr{Z}0}}$-modules that 
\begin{equation}
\R^q f_*(\mathcal{O}_{X/K}^\dagger)_{(Z,\mathscr{Z}_K)}\cong  i_Z^{-1}\pi_*(\R^qf_{(X\times_Y Z,\overline{X}\times_{\overline{Z}}\overline{Y})/\mathscr{Z},\mathrm{rig}*}(\mathcal{O}_{X/K}^\dagger))
\end{equation}
where we are abusing notation slightly and writing $i_Z:]Z[_{\mathscr{Z}}\rightarrow ]\overline{Z}[_{\mathscr{Z}}$. Hence it suffices simply to note that $i_Z^{-1}\pi_*(\mathcal{F}^q_{(Z,\overline{Z},\mathscr{Z})})\cong E^q_{(Z,\mathscr{Z}_K)}$.
\end{proof}

\begin{remark} Of course, we have not said what the category $\mathcal{C}$ is, so the proposition as stated is not particularly useful. A full description of $\mathcal{C}$ comes from a precise statement of Shiho's result, which is Theorem 5.1 of \cite{Shi08b}. Another way to look at the proposition is that it is saying $\R^qf_*(\mathcal{O}^\dagger_{X/K})$ is generically a finitely presented crystal on some full subcategory of $(Y/K)_{\an}$. 
\end{remark}

\section{Rigid fundamental groups and homotopy obstructions}

In the previous sections, we have defined absolute and relative rigid rational homotopy types. These are dga's, and we can apply the bar construction to obtain algebraic models of path spaces. Thus we can extract pro-unipotent groups which in some sense deserve to be called unipotent fundamental groups. However, there are already definitions of these - in the absolute case we have the Tannaka dual of the category of unipotent isocrystals, and in the relative (smooth and proper) case, there is a definition of the unipotent fundamental group given in \cite{Laz15}. One would like to compare these constructions and show that they give the same answer, and in this section we do so in the absolute case.

Here, we can basically copy Olsson's proof for convergent homotopy types of smooth and proper varieties. Recall that we have functors 
\begin{align}
D:\ho{\mathrm{dga}_K}& \rightarrow \ho{\mathrm{Alg}_K^{\Delta}} \\
\R\mathrm{Spec}:\ho{\mathrm{Alg}_K^\Delta}^\circ&\rightarrow \ho{\mathrm{SPr}(K)}
\end{align}
and Olsson has shown in his preprint \cite{Ols} that the bar construction $\pi_1$ of a dga $A$ coincides with the topological $\pi_1$ of the simplicial presheaf $\R\spec{D(A)}$. Hence, it suffices to prove the comparison between this topological $\pi_1$ of the rational homotopy type \begin{equation}
(X/K)_\mathrm{rig}:=\R\spec{D(\R\Gamma_\mathrm{Th}(\Omega^*(\mathcal{O}_{X/K}^\dagger)))}
\end{equation}
and the Tannakian $\pi_1$ of $X/K$.

In the smooth and proper case, working with the convergent site, this is proved by Olsson in \S2 of \cite{Ols07b}, and his proof adapts fairly easily to the rigid case. Rather than writing out the whole proof in our slightly different situation, we will just make a few comments that we hope will convince the reader that the necessary changes are easily made.

Owing to the comparison results both of \S\ref{overcon} above and of Le Stum's paper \cite{LS11}, we can everywhere in the construction of $\R\Gamma_{\mathrm{Th}}(\Omega^*(\mathcal{O}^\dagger_{X/K}))$ replace rigid spaces by Berkovich spaces. We can also easily construct the `cohomology complexes' of ind-coherent crystals of $\mathcal{O}^\dagger_{X/K}$-modules on the overconvergent site, exactly as in \S2.24 of \cite{Ols07b} by taking framing systems and realizations on these framing systems. This allows us to define the pointed stack $(\widetilde{X}/K)_\mathrm{rig}$ analogously to \S2.29 of \emph{loc. cit.}, but instead taking $\widetilde{G}$ to be the pro-unipotent Tannakian fundamental group rather than the whole pro-algebraic fundamental group. (Note that in our case, because we are only working with unipotent isocrystals,  $G=1$). 

The proof of Proposition 2.35 and Lemma 2.36 needs to be slightly modified as follows. Let $\pi$ denote the functor of $\widetilde{G}$-invariants (of sheaves or modules), and let $C^\bu(-)$ denote the cohomology complex of an ind-coherent crystal of $\mathcal{O}^\dagger_{X/K}$-modules. Let $\mathbb{L}(\mathcal{O}_{\widetilde{G}})$ be the overconvergent version of Olsson' object of the same name. Then as in Proposition 2.35 we need to compare $\R\Gamma_\mathrm{rig}(\mathcal{V})$ and $\R\pi(\R\Gamma_\mathrm{an}(C^\bu(\mathcal{V}\otimes\mathbb{L}(\mathcal{O}_{\widetilde{G}}))))$ for a unipotent overconvergent isocrystal $\mathcal{V}$, which is equivalent to comparing $\R\Gamma_\mathrm{rig}(\mathcal{V})$ and $\R\pi(\R\Gamma_\mathrm{rig}(\mathcal{V}\otimes\mathbb{L}(\mathcal{O}_{\widetilde{G}})))$. Since $\pi$ and $\Gamma_\mathrm{rig}$ commute, as in the proof of Lemma 2.36 it suffices to show that $\mathcal{V}\cong \R\pi(\mathcal{V}\otimes \mathbb{L}(\mathcal{O}_{\widetilde{G}}))$, and the proof of this follows exactly as in \emph{loc. cit.}, using the overconvergent rather than the convergent site. To summarise we have the following theorem.

\begin{theorem} The Tannakian unipotent fundamental group of a $k$-variety $X$ at a point $x\in X(k)$ coincides with the unipotent fundamental group obtained from the augmented dga $\R\Gamma_\mathrm{Th}(\Omega^*(\mathcal{O}_{X/K}^\dagger))$ via the bar construction. In particular, if $k$ is a finite field, then the linear Frobenius structure on the (co-ordinate ring of the) former is mixed.
\end{theorem}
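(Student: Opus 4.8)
The plan is to follow Olsson's comparison argument from \S2 of \cite{ols1} essentially verbatim, replacing the convergent site by the overconvergent site throughout, and then to deduce the assertion about Frobenius from Theorem \ref{mixedtheo}. First, by the main result of Olsson's preprint \cite{olspre}, for any augmented $K$-dga $A$ with connected cohomology the pro-unipotent group $\spec{H^0(B(A))}$ produced by the bar construction is canonically the sheaf-theoretic $\pi_1$ of the affine stack $\R\spec{D(A)}$, based at the point determined by the augmentation. Applying this to $A=\R\Gamma_\mathrm{Th}(\Omega^*(\mathcal{O}^\dagger_{X/K}),x)$, the theorem reduces to identifying the topological $\pi_1$ of the pointed stack $(X/K)_\mathrm{rig}$ with the Tannaka dual $\widetilde G$ of the category of unipotent overconvergent isocrystals on $X$, pointed at $x$.

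For this I would mimic \S\S2.24--2.29 of \emph{loc. cit.}: using a framing system for $X$ and the realizations on it, one builds, for each ind-coherent crystal $\mathcal{V}$ of $\mathcal{O}^\dagger_{X/K}$-modules, a cohomology complex $C^\bu(\mathcal{V})$ (a dga when $\mathcal{V}=\mathcal{O}^\dagger_{X/K}$, and a dga-module in general) whose Thom--Whitney global sections compute $\R\Gamma_\mathrm{rig}(X/K,\mathcal{V})$; by the comparison results of \S\ref{overcon} and \cite{oversite} one may equally work with Berkovich spaces here. This lets one define the pointed stack $(\widetilde X/K)_\mathrm{rig}$ attached to $\widetilde G$ exactly as in \S2.29 (the pro-reductive quotient being trivial since we only consider unipotent objects). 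The crux is then the analogue of Proposition 2.35 and Lemma 2.36: writing $\pi$ for the functor of $\widetilde G$-invariants and $\mathbb{L}(\mathcal{O}_{\widetilde G})$ for the overconvergent version of Olsson's resolution, one must show $\R\Gamma_\mathrm{rig}(\mathcal{V})\cong \R\pi(\R\Gamma_\mathrm{an}(C^\bu(\mathcal{V}\otimes\mathbb{L}(\mathcal{O}_{\widetilde G}))))$ for every unipotent overconvergent isocrystal $\mathcal{V}$. Since $\R\Gamma_\mathrm{an}\circ C^\bu$ computes $\R\Gamma_\mathrm{rig}$ and $\pi$ commutes with $\R\Gamma_\mathrm{rig}$, this comes down to the acyclicity $\mathcal{V}\cong \R\pi(\mathcal{V}\otimes\mathbb{L}(\mathcal{O}_{\widetilde G}))$, whose proof transcribes word for word from \cite{ols1}. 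Finally, the whole comparison is manifestly functorial, hence compatible with the Frobenius endofunctors on both sides.

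The only place requiring care --- and I expect this to be the main, minor, obstacle --- is verifying that these ingredients behave identically in the overconvergent setting: that the cohomology complex of an ind-coherent crystal and the object $\mathbb{L}(\mathcal{O}_{\widetilde G})$ are well defined, and that the exactness and acyclicity facts invoked in the proof of Lemma 2.36 still hold. But unipotent overconvergent isocrystals on $X$ form a neutral Tannakian category with the same formal structure as the convergent ones, and the realization and global-section functors on the overconvergent site have the same exactness properties used in the convergent theory (compare the proof of Lemma \ref{qba}), so no new input is needed.

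For the ``in particular'': by Theorem \ref{mixedtheo} in its pointed form, the augmented $F$-dga $\R\Gamma_\mathrm{Th}(\Omega^*(\mathcal{O}^\dagger_{X/K}),x)$ is mixed, hence so is its bar complex $B(\R\Gamma_\mathrm{Th}(\Omega^*(\mathcal{O}^\dagger_{X/K}),x))$, and in particular $H^0$ of the bar complex --- which is precisely the coordinate ring of the unipotent fundamental group obtained from the bar construction --- carries a mixed Frobenius structure. By the first part of the theorem this coordinate ring is identified, Frobenius-equivariantly, with the coordinate ring of the Tannakian unipotent fundamental group, so the latter is mixed.
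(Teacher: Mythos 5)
Your proposal follows the paper's own argument essentially verbatim: both reduce via Olsson's preprint to comparing the topological $\pi_1$ of $(X/K)_\mathrm{rig}$ with the Tannaka dual of unipotent overconvergent isocrystals, both transcribe \S\S2.24--2.36 of Olsson's paper to the overconvergent site with the key point being $\mathcal{V}\cong\R\pi(\mathcal{V}\otimes\mathbb{L}(\mathcal{O}_{\widetilde G}))$, and both deduce mixedness from Theorem \ref{mixedtheo} in its pointed form via the bar construction. No substantive differences.
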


\begin{remark} Mixed structures on the unipotent Tannakian fundamental group have already been studied by Chiarellotto in \cite{Chi98}, where he defines a weight filtration on the completed universal enveloping algebra of the Lie algebra of the unipotent Tannakian fundamental group.
\end{remark}

\begin{remark} Unfortunately, at the moment this results seems difficult to extend to the relative case, for multiple reasons, of which we will not go into detail here.
\end{remark}

A reason that we are interested in this comparison is that in \cite{Laz15} we defined a function field analogue of Kim's non-abelian period map \begin{equation}
X(S)\rightarrow H^1_{F,\mathrm{rig}}(S,\pi^\mathrm{rig}_1(X/S,p))
\end{equation}
which takes sections of a smooth and proper scheme $f:X\rightarrow S$ over a curve over $k$ to a certain set classifying $F$-torsors under the relative unipotent fundamental group, at some base point $p\in X(S)$.

Basic functoriality of relative rational homotopy types in this situation gives a map
\begin{equation}
X(S)\rightarrow [\R f_*(\mathcal{O}_{X/K}^\dagger),\mathcal{O}^\dagger_{S/K} ]_{F\text{-}\ho{\mathrm{dga}(\mathcal{O}_{S/K}^\dagger)}} 
\end{equation} 
where the RHS is maps in the homotopy category, and we would like to compare these two period maps. To do so, we will certainly need to compare the Tannakian construction of the relative fundamental group with the relative rational homotopy type.

\subsection{A rather silly example}
Recall that when discussing homotopy obstructions in the absolute case, we noted that the non-existence of a section of the map $f:\A^1_k\rightarrow \A^1_k$, $x\mapsto x^2$ could not be detected on the level of rational homotopy, because the rational homotopy type of $\A^1_k$ is trivial. However, we can see this non-existence on the level of relative rational homotopy types. Indeed, it clearly suffices to show that there is no section of the $x\mapsto x^2$ map on $\A^1_k\setminus\{0\}$, and here we can explicitly describe the (isomorphism class of the) push-forward of the constant isocrystal $f_*(\mathcal{O}^\dagger_{\A^1_k\setminus\{0\}/K})$. It is a free rank 2 module over $K\langle t,t^{-1}\rangle^\dagger$, with connection and algebra structures defined by 
\begin{equation}
\nabla\left( \begin{matrix}f \\ g\end{matrix}\right)= \left( \begin{matrix} df  \\dg-g\frac{dt}{2t}\end{matrix} \right),\quad \left( \begin{matrix}f_1 \\ g_1\end{matrix}\right)\left( \begin{matrix}f_2 \\ g_2\end{matrix}\right)=\left( \begin{matrix}f_1f_2+g_1g_2 \\ f_1g_2+f_2g_1\end{matrix}\right).
\end{equation} 
It is simple to verify that there cannot be a morphism $ f_*(\mathcal{O}^\dagger_{\A^1_k\setminus\{0\}/K})\rightarrow \mathcal{O}^\dagger_{\A^1_k\setminus\{0\}/K}$ compatible with both the algebra structures and the connection, and hence that there can be no section of $f$ on $\A^1_k\setminus\{0\}$.

Of course this example is rather stupid - one does not need the huge machinery of homotopy theory and the overconvergent site to show that there is no square root of $t$ in $k[t]$! However, this example is instructive for two reasons.

\begin{itemize}
\item It shows that the relative rational homotopy type contains strictly more information that just looking at the map between the absolute rational homotopy types. 
\item The algebra structure was crucial in showing the non-existence of a section of homotopy types - there certainly is a section of the cohomology, but it is not multiplicative.
\end{itemize}

\section*{Acknowledgements}
This paper is intended to form part of the author's PhD thesis, and was written under the supervision of Ambrus P\'{a}l at Imperial College, London. He would like to thank Dr P\'{a}l both for suggesting the direction of research and for providing support, encouragement and many hours of fruitful discussions. He would also like to thank the anonymous referee for a careful reading of the paper, for pointing out errors and suggesting improvements.

\bibliographystyle{mysty}
\bibliography{/Users/Chris/Dropbox/LaTeX/lib.bib}
	
\end{document}